\newcommand{\eq}[1][r]
{\ar@<-3pt>@{-}[#1]
\ar@<-1pt>@{}[#1]|<{}="gauche"
\ar@<+0pt>@{}[#1]|-{}="milieu"
\ar@<+1pt>@{}[#1]|>{}="droite"
\ar@/^2pt/@{-}"gauche";"milieu"
\ar@/_2pt/@{-}"milieu";"droite"}
\newcommand{\Gal}[1]{\mathrm{Gal}(#1)}
\newcommand{\Z}{\mathbb{Z}}
\newcommand{\Q}{\mathbb{Q}}
\newcommand{\Tor}[2]{\mathrm{Tor}_{#1}(#2)}
\newcommand{\Fr}[2]{\mathrm{Fr}_{#1}(#2)}
\newcommand{\X}[2]{\mathfrak{X}_{(#1)}^{(#2)}}
\newcommand{\XS}[2]{\mathfrak{X}_{S}^{(#1)}(#2)}
\newcommand{\XSp}[1]{\mathfrak{X}_{S}(#1)}
\newcommand{\rank}[1]{\mathrm{rank}_{#1}}
\newcommand{\ind}[2]{\mathrm{Ind}^{#1}_{#2}}
\newcommand{\coind}[2]{\mathrm{Coind}^{#1}_{#2}}
\newcommand{\cker}[1]{\mathcal{K}er_{S}^2(#1, \Z_p(i))}
\newcounter{dummy} \numberwithin{dummy}{section}
\newtheorem{theo}[dummy]{Theorem }
\newtheorem{coro}[dummy]{Corollary}
\newtheorem{lemm}[dummy]{Lemma}
\newtheorem{prop}[dummy]{Proposition}
\newtheorem{rema}[dummy]{Remark}
\newtheorem*{conj}{Conjecture}
\newtheorem*{theo*}{Theorem}
\title{\sc On Greenberg's generalized conjecture}
\begin{document}
	\maketitle
\begin{center}	
		
		{\sc J. Assim$^{(1)}$ and Z. Boughadi$^{(2)}$ }\\
		{\footnotesize $^{(1)}$ Moulay Ismail University of Mekn\`{e}s, Morocco\\
			Team work: Algebraic Theory and Application}\\
		{\footnotesize e-mail: {\it  j.assim@umi.ac.ma}}\\
	{\footnotesize $^{(2)}$ Moulay Ismail University of Mekn\`{e}s, Morocco\\
		Team work: Algebraic Theory and Application }\\
	{\footnotesize e-mail: {\it z.boughadi@edu.umi.ac.ma}}

\end{center}

\thispagestyle{empty}

\hrulefill
\renewcommand{\abstractname}{Abstract}
\begin{abstract}  
	{\footnotesize For a number field $F$ and an odd prime number $p,$ let $\tilde{F}$ be the compositum of all $\Z_p$-extensions of $F$ and $\tilde{\Lambda}$ the associated Iwasawa algebra. Let $G_{S}(\tilde{F})$ be the Galois group over $\tilde{F}$ of the maximal extension which is unramified outside $p$-adic and infinite places. In this paper we study the $\tilde{\Lambda}$-module $\XS{-i}{\tilde{F}}:=H_1(G_S(\tilde{F}), \Z_p(-i))$ and its relationship with $X(\tilde{F}(\mu_p))(i-1)^\Delta,$ the $\Delta:=\Gal{\tilde{F}(\mu_{p})/\tilde{F}}$-invariant of the Galois group over $\tilde{F}(\mu_{p})$ of the maximal abelian unramified pro-$p$-extension of $\tilde{F}(\mu_{p}).$ More precisely, we show that under a decomposition condition, the pseudo-nullity of the $\tilde{\Lambda}$-module $X(\tilde{F}(\mu_p))(i-1)^\Delta$ is implied by the existence of a $\Z_{p}^d$-extension $L$ with $\XS{-i}{L}:=H_1(G_S(L), \Z_p(-i))$ being without torsion over the Iwasawa algebra associated to $L,$ and which contains a $\Z_{p}$-extension $F_{\infty}$ satisfying $H^2(G_{S}(F_{\infty}),\Q_{p}/\Z_p(i))=0.$ As a consequence we obtain a sufficient condition for the validity of Greenberg's generalized conjecture when the integer $i\equiv 1 \mod{[F(\mu_p):F]}.$ This existence is fulfilled for $(p, i)$-regular fields. 
	}
\end{abstract}

\hrulefill

{\small \textbf{Keywords:} Multiple $\Z_p$-extension, Iwasawa module, Galois cohomology. }

\indent {\small {\bf 2000 Mathematics Subject Classification:} 11R23, 11R32, 11R34.}

\section{Introduction}
Let $F$ be a number field and $p$ an odd prime. The notation $S$ stands for the set of  $p$-adic and infinite primes of $F.$ Let $F_S$ be the maximal algebraic extension over $F$ which is unramified outside $S,$ and let $G_S(F)$ be the Galois group $\Gal{F_S/F}.$ For any $\Z_{p}^d$-extension $L$ of $F$ ($d\geq 1$),  we write $\Gamma_L$ for its Galois group and $\Lambda_L=\Z_p[[\Gamma_L]]$ the associated Iwasawa algebra. As usual $\tilde{F}$ will be  the compositum of all $\Z_p$-extensions of $F.$ Let $\tilde{\Gamma}$  be the Galois group of the extension $\tilde{F}/F$ and $\tilde{\Lambda}$ the associated Iwasawa algebra. It is well known that all $\Z_p$-extensions are unramified outside the set $S,$ thus $F_S$ contains $L.$ We denote by $G_S(L)$ the Galois group of the extension $F_S/L.$ Let $\XSp{L}$ be the Galois group of the maximal abelian pro-$p$-extension of $L$ which is unramified outside $S.$ In terms of homological groups we have $\XSp{L}=H_1(G_S(L), \Z_p).$ We define for any integer $i$, the twisted analogue of $\XSp{L}$ by $\XS{-i}{L}:=H_1(G_S(L), \Z_p(-i)).$ The group $\XS{-i}{L}$ has a structure of $\Lambda_L$-module, we will call it the twisted $S$-ramified Iwasawa module associated to $L.$ The aim of this paper is the study of the $\Lambda_{L}$-module $\XS{-i}{L}.$ Following Greenberg's (\cite{GC}) methods we determine its $\Lambda_{L}$-rank (Theorem \ref{the lambda rank multiple}) and we show that the projective dimension of $\XS{-i}{L}$ is at most one when $i\neq 1$, $i\not \equiv 0 \mod{[F(\mu_p): F]}$ and $F$ satisfies the twisted Leopoldt's conjecture $LC_{i}$ (Theorem \ref{multiple projective dimension}). We are interested also in the vanishing of $\Tor{\Lambda_{L}}{\XS{-i}{L}}$, the $\Lambda_{L}$-torsion sub-module of $\XS{-i}{L}$. Precisely, we give a going up property of the torsion triviality among a tower of multiple $\Z_{p}$-extensions (Theorem \ref{theorem torsion free}). As a consequence, we obtain that the vanishing of $\Tor{\tilde{\Lambda}}{\XS{-i}{\tilde{F}}}$ can be detected earlier in a $\Z_{p}$-extension of $F$ (Corollary \ref{earlier torsion}).\\
The motivation behind this study is the link between $\Tor{\tilde{\Lambda}}{\XS{-i}{\tilde{F}}}$ and a certain "eigenpart" of Greenberg's generalized conjecture.
\begin{conj}[GGC for short]
	For any number field $F$, the $\tilde{\Lambda}$-module $X(\tilde{F})$ is pseudo-null, where $X(\tilde{F})$ is the Galois group over $\tilde{F}$ of the maximal abelian unramified pro-$p$-extension of $\tilde{F}.$
\end{conj}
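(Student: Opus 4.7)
\emph{Proof plan.} The aim is to prove directly that $X(\tilde{F})$ is pseudo-null over $\tilde{\Lambda}$. My strategy is a three-stage reduction that assembles the machinery developed in the rest of the paper.

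\textbf{Stage 1: eigenspace reduction.} First I would pass from $\tilde{F}$ to $\tilde{F}(\mu_p)$. Since $p$ is odd, $\Delta := \Gal{\tilde{F}(\mu_p)/\tilde{F}}$ has order prime to $p$, so any pro-$p$ $\Delta$-module splits canonically into character eigenspaces indexed by integers $i$ modulo $[F(\mu_p):F]$. Standard inflation-restriction arguments control the kernel and cokernel of $X(\tilde{F}(\mu_p))^{\Delta} \twoheadrightarrow X(\tilde{F})$ by finitely generated $\Z_p$-modules, which are pseudo-null over $\tilde{\Lambda}$ as soon as $\tilde{\Gamma}$ has rank at least two. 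It therefore suffices to prove pseudo-nullity of each eigenspace $X(\tilde{F}(\mu_p))(i-1)^{\Delta}$.

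\textbf{Stage 2: translation to the ramified side.} For each fixed $i$, I would combine the twisted Iwasawa--Kummer exact sequence (relating the unramified Iwasawa module to its $S$-ramified counterpart via twisted units) with the $\tilde{\Lambda}$-rank computation of Theorem \ref{the lambda rank multiple}. This identifies the pseudo-null content of $X(\tilde{F}(\mu_p))(i-1)^{\Delta}$ with $\Tor{\tilde{\Lambda}}{\XS{-i}{\tilde{F}}}$. Consequently the $i$-eigenpart of GGC is equivalent to the vanishing of this torsion sub-module.

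\textbf{Stage 3: descent to a finite-level cohomological input.} By Corollary \ref{earlier torsion}, the vanishing of $\Tor{\tilde{\Lambda}}{\XS{-i}{\tilde{F}}}$ is detected at some intermediate $\Z_p^d$-extension: I must produce an $L \subseteq \tilde{F}$ with $\XS{-i}{L}$ torsion-free over $\Lambda_L$. By the going-up Theorem \ref{theorem torsion free}, such an $L$ exists whenever some $\Z_p$-subextension $F_\infty \subseteq L$ satisfies $H^2(G_S(F_\infty), \Q_p/\Z_p(i)) = 0$. Executing this construction for every residue class $i$ modulo $[F(\mu_p):F]$ would close the proof of GGC for $F$.

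\textbf{Main obstacle.} The hard step is Stage 3: no unconditional mechanism is known for producing, uniformly in $i$, a $\Z_p$-extension of $F$ on which the twisted $H^2$ vanishes. Such $F_\infty$ exist when $F$ is $(p,i)$-regular and, more generally, under appropriate decomposition hypotheses on the $p$-adic primes of $F$, but a construction covering every $i$ and every $F$ lies beyond current techniques. This cohomological input is exactly the missing ingredient, and it is the reason GGC remains open in general; the strategy above makes precise that GGC reduces, eigenpart by eigenpart, to producing these weak-Leopoldt-type vanishings inside $\tilde{F}$.
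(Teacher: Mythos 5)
This statement is a conjecture, not a theorem; the paper does not prove it and explicitly treats it as open, so there is no ``paper's own proof'' to compare against. Your proposal is accordingly, and correctly, an obstructed proof sketch rather than a proof, and your diagnosis of where the strategy stalls (Stage 3: producing the needed weak-Leopoldt-type vanishing) is exactly the gap the paper also leaves open. What the paper actually proves are conditional results: Theorem \ref{formulation Greenberg conjecture} gives, under (Dec) and for $i\not\equiv 0\bmod[F(\mu_p):F]$, an equivalence between vanishing of $\Tor{\tilde{\Lambda}}{\XS{-i}{\tilde{F}}}$ and pseudo-nullity of $X(\tilde{F}(\mu_p))(i-1)^\Delta$; and Theorem \ref{main theorem} and Corollary \ref{twist i=1} convert this plus the going-up Theorem \ref{theorem torsion free} into a sufficient condition for GGC. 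Your proposal assembles these same ingredients, so in spirit it is aligned with the paper.

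There is, however, a genuine error in your Stage~1 reduction. Because $p\nmid|\Delta|$ the surjection $X(\tilde{F}(\mu_p))^\Delta\twoheadrightarrow X(\tilde{F})$ already lands you in a \emph{single} eigenspace, namely the $\Delta$-invariants, which is $X(\tilde{F}(\mu_p))(i-1)^\Delta$ with $i\equiv 1\bmod[F(\mu_p):F]$. You do not need, and should not try, to prove pseudo-nullity of every eigenspace: that is strictly stronger than GGC for $F$, and moreover the paper's machinery deliberately excludes $i\equiv 0\bmod[F(\mu_p):F]$ (equivalently the $(i-1)\equiv -1$ eigenspace), since the key step via Jannsen's exact sequence requires $H^0(G_S(\tilde{F}),\Q_p/\Z_p(i))=0$. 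Asking for ``every residue class $i$'' therefore runs your Stage~2 into a case where the translation to the ramified side is simply unavailable. The correct statement of what remains, which is how the paper frames Corollary \ref{twist i=1}, is: under (Dec), GGC for $F$ reduces to exhibiting one $\Z_p^d$-extension $L\subseteq\tilde{F}$ with $\Tor{\Lambda_L}{\XS{-i}{L}}=0$ for some $i\equiv 1\bmod[F(\mu_p):F]$, and $L$ containing a $\Z_p$-extension satisfying $WLC_i$. You should also make (Dec) explicit in Stage~2; it is not optional, being used both in Proposition \ref{proposition chi composante} (to pass between $X$ and $X'$) and in Lemma \ref{pseudo-null} (to kill the semi-local $H^2$ term).
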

\noindent In \cite{Equi GGC}, the authors show, under some conditions, the equivalence between the triviality of $\Tor{\tilde{\Lambda}}{\XS{-1}{\tilde{F}}}$ and the validity of GGC. Here we prove a twisted generalization of this equivalence (Theorem \ref{formulation Greenberg conjecture}). Supposing a decomposition condition (see (Dec) below), we show for any integer $i\not \equiv 0 \mod{[F(\mu_p):F]}$ the equivalence between the following assertions:
\begin{itemize}
	\item $X(\tilde{F}(\mu_p))(i-1)^\Delta$ is pseudo-null, where $\Delta:=\Gal{\tilde{F}(\mu_{p})/\tilde{F}}$.
	\item  $\XS{-i}{\tilde{F}}$ is without $\tilde{\Lambda}$-torsion.
\end{itemize}
Summarizing, we obtain a sufficient condition for the pseudo-nullity of $X(\tilde{F}(\mu_p))(i-1)^\Delta$ in terms of the torsion sub-module of the twisted $S$-ramified Iwasawa module associated to a multiple $\Z_{p}$-extension of $F.$ More precisely, 
\begin{theo*}[Theorem \ref{main theorem}]
	Assume that the condition $(Dec)$ holds for $F.$ Let $i$ be an integer such that $i\not\equiv 0 \mod{[F(\mu_p):F]}.$ Suppose that $F$ admits  a $\Z_{p}^d$-extension $L$ such that $\Tor{\Lambda_L}{\XS{-i}{L}}=0$ and $L$ contains a $\Z_{p}$-extension $F_\infty$ such that $H^2(G_S(F_\infty), \Q_p/\Z_p(i))=0.$ Then the $\tilde{\Lambda}$-module $X(\tilde{F}(\mu_p))(i-1)^\Delta$ is pseudo-null. 
\end{theo*}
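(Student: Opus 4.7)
The plan is a two-step reduction to the main structural results already proved earlier in the paper. First, I would invoke the twisted reformulation of Greenberg's generalized conjecture established in Theorem \ref{formulation Greenberg conjecture}. Since the condition $(Dec)$ is assumed for $F$ and $i\not\equiv 0 \mod{[F(\mu_p):F]}$, that theorem yields the equivalence
\[
X(\tilde F(\mu_p))(i-1)^\Delta \text{ is pseudo-null over } \tilde\Lambda \;\Longleftrightarrow\; \Tor{\tilde\Lambda}{\XS{-i}{\tilde F}} = 0.
\]
So it is enough to show that the twisted $S$-ramified Iwasawa module $\XS{-i}{\tilde F}$ has no $\tilde\Lambda$-torsion.

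For the second step, I would apply the going up property of torsion triviality, Theorem \ref{theorem torsion free}, to the tower $L\subset \tilde F$. The two remaining hypotheses of the statement are precisely the inputs of that theorem: the torsion-freeness of $\XS{-i}{L}$ over $\Lambda_L$, and the existence of a $\Z_p$-extension $F_\infty\subset L$ with $H^2(G_S(F_\infty), \Q_p/\Z_p(i)) = 0$. The cohomological vanishing at $F_\infty$ is the crucial control input: it is what allows the proof of Theorem \ref{theorem torsion free}, via a Hochschild--Serre spectral sequence and an Iwasawa-theoretic descent, to carry torsion-freeness across the intermediate multiple $\Z_p$-extensions sitting between $L$ and $\tilde F$. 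Combining the two reductions yields $\Tor{\tilde\Lambda}{\XS{-i}{\tilde F}} = 0$, hence the desired pseudo-nullity of $X(\tilde F(\mu_p))(i-1)^\Delta$.

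The main obstacle is essentially fully absorbed into the going up Theorem \ref{theorem torsion free}: the delicate point is to ensure that at each intermediate stage from $L$ towards $\tilde F$ both the cohomological vanishing (suitably descended from $F_\infty$) and the torsion-freeness of the ramified Iwasawa module persist, and that no new torsion is created during the Iwasawa descent. Granted that machinery, which is developed in an earlier section of the paper, the present theorem falls out as a direct assembly of Theorems \ref{formulation Greenberg conjecture} and \ref{theorem torsion free}. It should also be noted that the hypothesis $i\not\equiv 0\mod{[F(\mu_p):F]}$ is used only in the first reduction, whereas the $F_\infty$-hypothesis feeds exclusively into the second.
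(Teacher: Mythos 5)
Your proposal is correct and is essentially identical to the paper's own argument, which is precisely this two-step assembly: Theorem \ref{theorem torsion free} (whose $WLC_i$ hypothesis for $F_\infty$ is exactly $H^2(G_S(F_\infty),\Q_p/\Z_p(i))=0$) yields $\Tor{\tilde\Lambda}{\XS{-i}{\tilde F}}=0$, and Theorem \ref{formulation Greenberg conjecture} then converts this into the pseudo-nullity of $X(\tilde F(\mu_p))(i-1)^\Delta$. The only cosmetic difference is that you present the equivalence from Theorem \ref{formulation Greenberg conjecture} first and then verify the torsion-freeness, while the paper runs the two steps in the opposite order.
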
 
Many theoretical results have been proved for GGC. For example, we cite the work of Minardi (\cite{Minardi}) on imaginary quadratic fields, whose results have been generalized by Itoh (\cite{Itoh}), Fujii (\cite{Fujii GGC}) and recently by Nguyen Quang Do (\cite{Nguyen19}). There methods use the ascending of the pseudo-nullity of the unramified Iwasawa module among a tower of multiple $\Z_{p}$-extensions which starts from a $\Z_{p}^{2}$-extension. The approach of this paper is based on the study of the torsion triviality of the twisted $S$-ramified Iwasawa module by an induction process which can starts from a $\Z_{p}$-extension.

The end of this paper is devoted to $(p, i)$-regular number fields. We prove that the $\tilde{\Lambda}$-module $X(\tilde{F}(\mu_p))(i-1)^\Delta$ is pseudo-null for any $(p, i)$-regular number field $F$ wich satisfies the condition $(Dec)$ (Theorem \ref{a part of GGC}). In particular, Greenberg's generalized conjecture is true for all $(p, 1)$-regular fields which verify $(Dec)$ (Corollary \ref{regualr number field}). This last result was also obtained by Nguyen Quang Do (loc. cit.).\\

\paragraph{\textbf{Notations}}
Let $F$ be a number field, and let $p$ be an odd prime number. We fix the following notations:

\begin{tabbing}
	$\Gal{N/K}$    \= the Galois group of an arbitrary Galois extension $N/K.$\\
	$L$                \> a $\Z_{p}^d$-extension of $F,$ $d\geq 1.$\\ 
	$\Gamma_L$ \>  the Galois group $\Gal{L/F}.$\\
	$\Lambda_L$ \> the associated Iwasawa algebra to $\Gamma_L,$ $\Lambda_L=\Z_p[[\Gamma_L]].$ \\
	$F^c$ \> the cyclotomic $\Z_{p}$-extension of $F.$\\
	$\Gamma_c$ \> the Galois group $\Gal{F^c/F}.$ \\
	$\Lambda_c$ \> the associated Iwasawa algebra to $\Gamma_c.$ \\
	$F_\infty$ \> an arbitrary $\Z_{p}$-extension of $F.$\\
	$\Gamma_\infty$ \> the Galois group $\Gal{F_\infty/F}.$ \\
	$\Lambda_\infty$ \> the associated Iwasawa algebra to $\Gamma_\infty.$ \\
	$S$ \>  the set of $p$-adic and infinite primes of $F$.\\
	$A(F)$ \> the $p$-primary part of the ideal class group of $F.$\\
	$A'(F)$ \> the $p$-primary part of the $(p)$-ideal class group of $F.$\\
	$X(F)$ \> the Galois group over $F$ of the maximal abelian unramified $p$-extension of $F.$\\
	$X'(F)$ \> the Galois group over $F$ of the maximal abelian unramified $p$-extension of $F$  \\
      	\> in which all $p$-primes split completely.\\
	$F_S$      \> the maximal extension of $F$ which is unramified outside $S.$\\
	$G_S(F)$   \>  the Galois group $\Gal{F_S/F}.$ \\
	$G_S(L)$ \> the Galois group $\Gal{F_S/L}.$\\
	$\XS{i}{L}$ \> the first homological group $H_1(G_S(L), \Z_p(i)).$\\
	$(\cdot)^\vee$ \> the Pontryagin dual.\\
	$\ind{\cdot}{\cdot} $ \>  the induced module.\\
	$\coind{\cdot}{\cdot}$  \>  the coinduced module.\\
	$r_1 := r_1(F)$   \> the number of real places of $F$.\\
	$r_2 := r_2(F)$ \> the number of complex places of $F$.\\
	$n_i$ \> $=\begin{cases}
	r_2 \qquad \:\:\mbox{ if } i \mbox{ is even}, \\
	r_1+r_2 \:\: \mbox{ if } i \mbox{ is odd.}
	\end{cases}
	$
\end{tabbing}
For a module $M$ over a ring $R,$ we denote by $\Tor{R}{M}$ the $R$-torsion sub-module of $M$ and $\Fr{R}{M}$ the maximal quotient of $M$ which is torsion free. We write $\mathrm{pd}_{R}M$ for the projective dimension of M over $R.$ If $R$ is a domain, let $K_R$ be its field of fractions. We call the rank of $M$ over $R$ the dimension of the vector space $M\otimes K_R,$ and we denote it by $\rank{R}M.$ Recall that a finitely generated $R$-module $M$ is called pseudo-null if $M_{\mathfrak{p}}=0$ for all prime ideals $\mathfrak{p}$ of height $\mathrm{ht}(\mathfrak{p})\leq 1,$ or equivalently any prime ideal containing the annihilator $\mathrm{Ann}_{R}(M)$ is of height $\geq 2.$ We say that two $R$-modules $M$ and $M'$ are pseudo-isomorphic (and we denote $M\sim M'$) if there exists an $R$-homomorphism with pseudo-null kernel and cokernel. 
\section{The rank of the twisted $S$-ramified Iwasawa module}
Let $\tilde{F}$ be the compositum of all $\Z_p$-extensions of $F.$ Class field theory gives a description of the Galois group $\tilde{\Gamma}=\Gal{\tilde{F}/F},$ namely 
\begin{equation*}
	\tilde{\Gamma} \simeq \Z_{p}^{r_2+1+\delta_F},
\end{equation*}
where $\delta_F$ is the Leopoldt defect, conjecturally null (Leopoldt's conjecture). A cohomological version of Leopoldt's conjecture is the triviality of the second cohomology group $H^2(G_S(F), \Q_p/\Z_p)$ (c.f \cite[Proposition $12$]{Nguyen82}). In general, we have the following twisted analogue of Leopoldt's conjecture (Greenberg, Schneider, ...)
\begin{conj}[$LC_i$ for short]
	For any number field $F$ and all integers $i\neq 1,$ the Galois cohomology group $H^2(G_S(F), \Q_p/\Z_p(i))$ vanishes.
\end{conj}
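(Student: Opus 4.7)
The statement $LC_i$ is the twisted Leopoldt conjecture, a well-known open problem whose $i=0$ case recovers Leopoldt's classical conjecture. Any proof proposal here is necessarily a strategy rather than a complete argument; the conjecture is established only in restricted situations (abelian base fields, totally real fields under $p$-adic transcendence hypotheses, etc.), and I would acknowledge this at the outset.

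The natural plan is to translate the vanishing of $H^2(G_S(F), \Q_p/\Z_p(i))$ through Poitou--Tate duality: after inverting $p$, its Pontryagin dual identifies with the kernel of the global-to-local restriction map on $H^1(G_S(F), \Q_p(1-i))$, so the claim becomes an injectivity statement for the localisation. Combining this with the global Euler--Poincar\'e characteristic formula and the description of the archimedean local terms in terms of $n_i$ (the notation introduced above is precisely designed for this count), one reduces $LC_i$ to matching the $\Z_p$-rank of $H^1(G_S(F), \Z_p(1-i))$ with the dimension of its image under localisation at the primes of $S$.

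Next I would attempt to produce enough global classes in $H^1(G_S(F), \Z_p(1-i))$ to surject onto the expected local image. For $i \geq 2$ the natural construction is Soul\'e's cyclotomic elements, and the Borel--Quillen rank theorem supplies the correct lower bound on the global rank; the case $i \leq -1$ should then reduce to the previous one via Tate--Iwasawa duality along the cyclotomic $\Z_p$-extension $F^c$. At each step one exploits that the weak Leopoldt conjecture $H^2(G_S(F^c), \Q_p/\Z_p(i))=0$ is known unconditionally (Iwasawa), and descends to $F$ using the torsion-ascent results developed later in the paper (see Theorem \ref{theorem torsion free}), which relate the torsion of the twisted $S$-ramified module at finite level to its behaviour along a $\Z_p^d$-tower.

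The main obstacle, present at every twist but genuinely insurmountable at $i=0$, is precisely this descent from $F^c$ to $F$: the kernel of the restriction map on $H^1$ is a Leopoldt-type defect, whose vanishing requires $p$-adic transcendence input of Baker--Brumer type. This is why $LC_0$ is known only for abelian extensions, and why even for $i \neq 0,1$ the conjecture at the base level $F$ remains open in general --- which is also the reason the paper below takes $LC_i$ as a hypothesis in its main theorem on projective dimension rather than attempting to prove it.
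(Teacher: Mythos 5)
You correctly recognized that $LC_i$ is a conjecture, not a theorem, and indeed the paper offers no proof of it: it is stated as a hypothesis, and the text immediately following it simply records the known cases, namely the classical Leopoldt conjecture ($i=0$) for abelian extensions of $\Q$, and $i\ge 2$ for all number fields by Soul\'e \cite{Soule}. Your sketch via Poitou--Tate duality, the global Euler--Poincar\'e characteristic, and Soul\'e's cyclotomic elements is exactly the standard framework behind those two known families of cases, and your framing of the problem as an injectivity of the global-to-local map on $H^1$ (equivalently, no ``extra'' global classes beyond the archimedean count $n_i$) is the right translation. So your account matches the paper's treatment in spirit.

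One caution: your suggested route of establishing weak Leopoldt over $F^c$ unconditionally and then ``descending to $F$ using the torsion-ascent results developed later in the paper (Theorem \ref{theorem torsion free})'' does not work as stated. The Hochschild--Serre spectral sequence and Theorem \ref{theorem torsion free} give implications in the \emph{ascending} direction only: $H^2(G_S(F),\Q_p/\Z_p(i))=0$ implies $H^2(G_S(F_\infty),\Q_p/\Z_p(i))=0$ (Remark \ref{Leopoldt tordue}(1)), and torsion-freeness of $\XS{-i}{F^{(d)}}$ propagates to $\XS{-i}{F^{(d+1)}}$. Neither statement descends from the $\Z_p$-extension back to the finite level $F$; indeed the spectral sequence produces an extra $H^1(\Gamma,\cdot)$ term that is precisely the Leopoldt-type defect you identify as the obstruction. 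So the descent step in your strategy has no mechanism in the paper, and as you yourself note in your last paragraph, it is exactly where the $p$-adic transcendence input is unavoidable. The paper sidesteps this entirely by treating $LC_i$ as a hypothesis.
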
  
\noindent For $i=0$ (i.e. Leopoldt's conjecture), the conjecture is true if $F/\Q$ is an abelian extension. Also, the conjectures $LC_i$ hold for any number field and any integer $i \geq 2$ by Soul\'{e} (c.f \cite{Soule}). Notice that the conjectures $LC_i$ concerns number fields, and as for the case $i=0,$ we have a weak version of $LC_i$ for an arbitrary $\Z^{d}_p$-extension $L$ of $F.$
\begin{conj}[$WLC_i$ for short]\label{WLCi}
	Let $L$ be a $\Z^{d}_p$-extension of a number field $F$, and let $i$ be an integer. If $i\neq 1$or $F^{c}\subset L$, then $H^2(G_S(L), \Q_p/\Z_p(i))=0.$
\end{conj}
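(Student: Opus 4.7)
The plan is to reduce $WLC_i$ to cohomology at finite levels by means of the standard identification
\[
H^2(G_S(L), \Q_p/\Z_p(i)) \;=\; \varinjlim_{L'} H^2(G_S(L'), \Q_p/\Z_p(i)),
\]
where $L'$ runs through the finite subextensions of $L/F$. Each such $L'$ is itself a number field, so the conclusion would follow at once from $LC_i$ at every $L'$. The structure of the proof is therefore to treat the cases of $i$ separately, invoking the strongest available finite-level statement whenever possible, and an ``ascending'' argument along a cyclotomic tower when the finite-level conjecture is out of reach.

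For $i\geq 2$ the reduction closes immediately, since Soul\'e's theorem supplies $LC_i$ at every number field; consequently $WLC_i$ holds unconditionally in this range, without any hypothesis on $L$. The substantive case is $i\leq 0$ combined with the assumption $F^c\subset L$. Here one would first establish the cyclotomic input $H^2(G_S(F^c), \Q_p/\Z_p(i))=0$, which is Iwasawa's classical weak Leopoldt theorem (and its twisted form obtained by the same unit/decomposition analysis at $p$-adic places). One then climbs from $F^c$ up to $L$ inside the $\Z_p^{d-1}$-extension $L/F^c$ by a Hochschild--Serre spectral sequence argument: since $\mathrm{cd}_p\Gal{L/F^c}=d-1$ and $\mathrm{cd}_p\, G_S(L)\leq 2$, the transition of $H^2$ along each intermediate $\Z_p$-step is controlled by a single edge differential, whose vanishing reduces to the cofinite generation of the corresponding $H^1$ along the tower, precisely the kind of control that the Iwasawa-module machinery developed in the body of the paper is designed to provide.

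The main obstacle lies in the case $i\leq 0$ with $F^c\not\subset L$. Here no cyclotomic direction is available to invoke weak Leopoldt, and Soul\'e's theorem applies only to the companion twist $1-i\geq 1$, in the wrong cohomological position. A natural attempt is to use Tate--Poitou duality at each finite layer to relate $H^2(G_S(L'), \Q_p/\Z_p(i))$ to the projective limit of $H^1(G_S(L'), \Z_p(1-i))$ and its universal norms along $L/L'$, but the requisite vanishing of those universal norms is itself a statement of Leopoldt type and is not implied by Soul\'e. This is exactly the reason $WLC_i$ has to remain conjectural in this regime and is retained below as a working hypothesis rather than as a proved fact.
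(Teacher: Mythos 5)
The statement you are treating is labelled as a \emph{conjecture} in the paper, so there is no proof of it to compare against; what the paper does supply, in the Proposition following it and in Remark \ref{Leopoldt tordue}, is a record of the cases in which $WLC_i$ is actually a theorem, together with the two tools used to establish them. Your proposal correctly identifies that $i\geq 2$ is unconditional and that the case $i\leq 0$, $F^c\not\subset L$ cannot be proved with current technology, which matches the paper's implicit delimitation. However, there are two genuine defects.

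First, you never address the case $i=1$ with $F^c\subset L$, which the conjecture explicitly includes. Your case split ``$i\geq 2$'' versus ``$i\leq 0$ with $F^c\subset L$'' skips $i=1$ entirely, and the remark in Tate--Poitou duality at the end of your proposal about the ``companion twist $1-i\geq 1$'' would not help for $i=1$ either. The paper's mechanism handles $i=1$ uniformly with all other $i$: since $G_S(F^c)$ acts trivially on $\Q_p/\Z_p(k)$ whenever $k\equiv 0\bmod{[F(\mu_p):F]}$, one chooses $j\geq 2$ with $j\equiv i\bmod{[F(\mu_p):F]}$ and obtains
\[
H^2(G_S(F^c),\Q_p/\Z_p(i))\;=\;H^2(G_S(F^c),\Q_p/\Z_p(j))(i-j),
\]
whose right-hand side vanishes by Soul\'e. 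This ``twist-shift to $j\geq 2$'' trick is the paper's actual route to the cyclotomic input; it replaces your appeal to an unspecified ``twisted form of Iwasawa's classical weak Leopoldt theorem,'' which is not a precise citation and, as written, would not obviously apply for all $i\leq 0$, let alone $i=1$.

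Second, your description of the climbing step is off the mark. You write that the passage from $F^c$ to $L$ ``is controlled by a single edge differential, whose vanishing reduces to the cofinite generation of the corresponding $H^1$ along the tower.'' The paper's argument requires no such input. Since $\mathrm{cd}(\Gamma)\leq 1$ for $\Gamma=\Gal{L/L'}$, the Hochschild--Serre five-term sequence ends with a surjection $H^2(G_S(L'),\Q_p/\Z_p(i))\twoheadrightarrow H^2(G_S(L),\Q_p/\Z_p(i))^{\Gamma}$; the vanishing of the source forces the vanishing of $H^2(G_S(L),\Q_p/\Z_p(i))^{\Gamma}$, and because $\Gamma$ is pro-$p$ acting continuously on a discrete $p$-torsion module, trivial $\Gamma$-invariants already force the whole module to be trivial. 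Finiteness or cofinite generation of $H^1$ plays no role. Your direct-limit reduction for $i\geq 2$ is valid and gives the same conclusion as the paper's combination of Remark \ref{Leopoldt tordue}(1) with the ascent Proposition, so that part of your analysis is a legitimate alternative, just organised differently.
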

\begin{rema}
If $i=1$ and $F^c \not\subset L,$ the cohomology group $H^2(G_S(L), \Q_p/\Z_p(i))$ does not vanish in general. Trough this paper, $WLC_i$ for $L$ means that $i\neq 1$ or $F^c \subset L.$
\end{rema}
\begin{prop}
	Let $L$ be a $\Z^{d}_p$-extension ($d\geq 2$) of $F,$ and let $i$ be an integer. If $L$ contains a $\Z^{d-1}_p$-extension $L'$ such that $H^2(G_S(L'),\Q_p/\Z_{p}(i))=0,$ then $H^2(G_S(L),\Q_p/\Z_{p}(i))=0.$
\end{prop}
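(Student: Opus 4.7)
My plan is to apply the Hochschild--Serre spectral sequence to the group extension $G_S(L) \triangleleft G_S(L')$ with quotient $\Sigma := \Gal{L/L'}$, then to combine the hypothesis with a Nakayama-type argument. Since $L/F$ is a $\Z_p^d$-extension and $L'/F$ a $\Z_p^{d-1}$-extension contained in $L$, we have $\Sigma \cong \Z_p$, so the $p$-cohomological dimension of $\Sigma$ equals $1$. Writing $A := \Q_p/\Z_p(i)$, the spectral sequence
\begin{equation*}
H^p(\Sigma, H^q(G_S(L), A)) \Rightarrow H^{p+q}(G_S(L'), A)
\end{equation*}
has $E_2^{p,q} = 0$ for $p \geq 2$, and therefore yields the short exact sequence
\begin{equation*}
0 \to H^1(\Sigma, H^1(G_S(L), A)) \to H^2(G_S(L'), A) \to H^2(G_S(L), A)^{\Sigma} \to 0.
\end{equation*}
The hypothesis kills the middle term, so in particular $H^2(G_S(L), A)^{\Sigma} = 0$.

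To upgrade triviality of the $\Sigma$-invariants to triviality of the whole group I would pass to the Pontryagin dual. Setting $M := H^2(G_S(L), A)^{\vee}$, duality exchanges invariants and coinvariants, so the vanishing above translates into $M/(\gamma - 1)M = 0$, where $\gamma$ is a topological generator of $\Sigma$. The key technical input is that $M$ is a finitely generated $\Lambda_L$-module; this is a standard Iwasawa-theoretic fact, obtained by taking an inverse limit of the finite-layer statements (for each $L_n$, $H^2(G_S(L_n), A)^{\vee}$ is a finitely generated $\Z_p$-module, and the transition maps give $M$ a natural $\Lambda_L$-structure of finite type). Once this is granted, the element $\gamma - 1$ lies in the maximal ideal of the Noetherian local ring $\Lambda_L$, and the determinant-trick (Cayley--Hamilton) form of Nakayama's lemma gives: if $(\gamma - 1)M = M$ then $M = 0$. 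Dualising back, $H^2(G_S(L), \Q_p/\Z_p(i)) = 0$.

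The main obstacle, modulo standard machinery, is the finite-generation of $M$ over $\Lambda_L$; this is where one must invoke the Iwasawa-theoretic structure carefully and check that Pontryagin duality interacts well with the limit. Everything else is essentially formal. As an alternative route, one could induct on $d$ to reduce to the case where $L/L'$ itself is a single $\Z_p$-extension and apply a control theorem for the second cohomology group, but the direct spectral-sequence plus Nakayama approach above seems cleanest and most closely parallels the style already present in the paper.
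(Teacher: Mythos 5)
Your proof follows the same route as the paper's: the paper also applies Hochschild--Serre to $G_S(L)\triangleleft G_S(L')$, uses $\mathrm{cd}(\Gal{L/L'})\leq 1$ to obtain exactly the short exact sequence you wrote, and then concludes $H^2(G_S(L),\Q_p/\Z_p(i))^{\Sigma}=0 \Rightarrow H^2(G_S(L),\Q_p/\Z_p(i))=0$, leaving that last implication unstated. The place where your write-up wobbles is precisely in filling in that last step. You invoke the determinant-trick Nakayama for finitely generated modules and assert that $M:=H^2(G_S(L),\Q_p/\Z_p(i))^{\vee}$ is finitely generated over $\Lambda_L$ because it is an inverse limit of finitely generated $\Z_p$-modules at the finite layers. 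That implication is not formal: an inverse limit of finitely generated $\Z_p$-modules over an infinite tower need not be of finite type over the Iwasawa algebra, and establishing that it is usually goes \emph{through} a Nakayama-type control argument of the kind you are trying to set up, so as stated the justification is close to circular.

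Fortunately no finite generation is needed. $M$ is automatically a compact $\Z_p[[\Sigma]]$-module (Pontryagin dual of a discrete torsion group), and the topological Nakayama lemma for compact modules over the local ring $\Z_p[[\Sigma]]\cong\Z_p[[T]]$ says that $M/\mathfrak{m}M=0$ forces $M=0$, where $\mathfrak{m}=(p,\gamma-1)$. Since $M/(\gamma-1)M$ surjects onto $M/\mathfrak{m}M$, your vanishing of coinvariants already gives $M=0$. Alternatively, one can avoid duality altogether: $H^2(G_S(L),\Q_p/\Z_p(i))$ is a discrete $p$-primary $\Sigma$-module, so any nonzero element generates a nontrivial finite $p$-group on which $\Sigma$ acts through a finite $p$-quotient, and a $p$-group acting on a nontrivial finite $p$-group has nonzero fixed points, contradicting $H^2(G_S(L),\Q_p/\Z_p(i))^{\Sigma}=0$. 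Replacing your finite-generation claim with either of these makes the argument complete and is what the paper's ``Therefore'' implicitly appeals to.
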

\begin{proof}
	Denote by $\Gamma$ the Galois group of the extension $L/L'.$ Since  $\mathrm{cd}(\Gamma)\leq 1,$ the Hochschild-Serre spectral sequence
	\begin{equation*}
	\xymatrix@=2pc{ H^p(\Gamma, H^q(G_S(L), \Q_p/\Z_p(i))) \ar@{=>}[r]&  H^{p+q}(G_S(L'),  \Q_p/\Z_p(i)) }
	\end{equation*}
	induces the exact sequence
	\begin{equation*}
	\xymatrix@=1.2pc{ 0 \ar[r]& H^1(\Gamma, H^1(G_S(L), \Q_p/\Z_p(i)))\ar[r]& H^2(G_S(L'),  \Q_p/\Z_p(i)) \ar[r]	& H^2(G_S(L),  \Q_p/\Z_p(i))^{\Gamma}  \ar[r] & 0. }
	\end{equation*}
	Therefore, the vanishing of the group $H^2(G_S(L'),  \Q_p/\Z_p(i))$ implies that of $H^2(G_S(L), \Q_p/\Z_p(i)).$
\end{proof}
\begin{rema}\label{Leopoldt tordue}
	\begin{enumerate}
		\item Using the same arguments as in the above proof and the Hochschild-Serre spectral sequence associated to the following group extension 
		\begin{equation*}
		\xymatrix@=2pc{ 0 \ar[r] & G_S(F_\infty) \ar[r] & G_S(F) \ar[r]&  \Gamma_\infty \ar[r]& 0, }
		\end{equation*}
		we can prove that $WLC_i$ holds for any $\Z_p$-extension if $F$ satisfies $LC_i.$
		\item It is well known that, for any integer $i$ the cyclotomic $\Z_p$-extension satisfies $WLC_i.$ Indeed, the Galois group $G_S(F^c)$ acts trivially on $\Q_p/\Z_p(k)$ if $k\equiv 0 \mod{[F(\mu_p): F]}.$ Hence for any integer $i,$ if we choose $j\geq 2$ such that $i\equiv j \mod{[F(\mu_p): F]},$ we have 
		\begin{equation*}
		H^2(G_S(F^c),  \Q_p/\Z_p(i))=H^2(G_S(F^c),  \Q_p/\Z_p(j))(i-j).
		\end{equation*}
		Hence we conclude that the conjecture $WLC_i$ holds for the cyclotomic $\Z_p$-extension at any integer $i,$ and thus for all $\Z_{p}^d$-extension containing $F^{c}.$
	\end{enumerate}
\end{rema}
In the rest of this section we study the $\Lambda_L$-module $\XS{-i}{L},$ using the methods of Greenberg (\cite{GC}). We show that $\XS{-i}{L}$ is a finitely generated $\Lambda_L$-module and we determine its $\Lambda_L$-rank. Instead of Leopoldt's conjecture used by Greenberg, here we use $LC_i$ or $WLC_i.$ Let's start with the following helpful lemma.
\begin{lemm}\label{torsion}
	Let $L$ be a $\Z_{p}^d$-extension ($d\geq 1$) of $F,$ and let $L'$ be a subfield of $L$ such that $\Gamma := \Gal{L/L'}\simeq \Z_p.$ Then we have the following exact sequence of $\Z_p[[\Gal{L'/F}]]$-modules
	\begin{equation} \label{Exacte sequece of lemma}
	\xymatrix@=2pc{
		0 \ar[r]& (\XS{-i}{L})_{_{\Gamma}} \ar[r]& \XS{-i}{L'} \ar[r]& H_1(\Gamma, \Z_p(-i)_{G_S(L)}) \ar[r]& 0.
	}
	\end{equation}
\end{lemm}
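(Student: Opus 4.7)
The plan is to apply the Hochschild--Serre spectral sequence in homology to the short exact sequence of profinite groups
\[
1 \longrightarrow G_{S}(L) \longrightarrow G_{S}(L') \longrightarrow \Gamma \longrightarrow 1
\]
with coefficients in $\Z_{p}(-i)$. This yields the first-quadrant spectral sequence
\[
E^{2}_{p,q} \;=\; H_{p}\bigl(\Gamma,\; H_{q}(G_{S}(L),\, \Z_{p}(-i))\bigr) \;\Longrightarrow\; H_{p+q}\bigl(G_{S}(L'),\, \Z_{p}(-i)\bigr),
\]
and I will read off its associated five-term exact sequence of low degree
\[
H_{2}(G_{S}(L'),\Z_{p}(-i)) \to H_{2}(\Gamma,\Z_{p}(-i)_{G_{S}(L)}) \to \bigl(\XS{-i}{L}\bigr)_{\Gamma} \to \XS{-i}{L'} \to H_{1}(\Gamma,\Z_{p}(-i)_{G_{S}(L)}) \to 0.
\]
The whole task then reduces to showing that the second term $H_{2}(\Gamma,\Z_{p}(-i)_{G_{S}(L)})$ vanishes.

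This vanishing is immediate from the fact that $\Gamma\simeq \Z_{p}$ has homological dimension one: for any topological generator $\gamma$ of $\Gamma$, the trivial module $\Z_{p}$ admits the length-one free resolution
\[
0 \longrightarrow \Z_{p}[[\Gamma]] \xrightarrow{\;\gamma-1\;} \Z_{p}[[\Gamma]] \longrightarrow \Z_{p} \longrightarrow 0,
\]
so $H_{q}(\Gamma, M)=\mathrm{Tor}^{\Z_{p}[[\Gamma]]}_{q}(\Z_{p},M)=0$ for every $q\geq 2$ and every continuous $\Z_{p}[[\Gamma]]$-module $M$. Specialising to $M=\Z_{p}(-i)_{G_{S}(L)}$ kills the offending term, and the remaining four terms are precisely the short exact sequence in the statement.

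The $\Z_{p}[[\Gal{L'/F}]]$-module structure is then just a naturality check: because $L/F$ is abelian, $L'/F$ is itself Galois, and conjugation by lifts of elements of $\Gal{L'/F}$ in $G_{S}(F)$ acts equivariantly on the whole construction, passing through each page of the spectral sequence and through the truncated five-term exact sequence. There is no genuine obstacle in this argument; the only point that demands some care is to work throughout with continuous (profinite) homology, so that the Hochschild--Serre sequence is available in the desired form and so that the vanishing of $H_{q\ge 2}(\Gamma,-)$ really applies to the coefficient module $\Z_{p}(-i)_{G_{S}(L)}$.
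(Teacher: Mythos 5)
Your proof is correct and takes essentially the same approach as the paper's: both hinge on the Hochschild--Serre spectral sequence for the extension $1\to G_S(L)\to G_S(L')\to\Gamma\to1$ together with the fact that $\Gamma\simeq\Z_p$ has (co)homological dimension one, which kills the degree-two term and yields the short exact sequence. The only cosmetic difference is that the paper runs the cohomological spectral sequence with $\Q_p/\Z_p(i)$ coefficients and then applies Pontryagin duality, whereas you work directly with the homological spectral sequence with $\Z_p(-i)$ coefficients; for profinite groups these are two phrasings of the same computation.
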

\begin{proof}
	The  Hochschild-Serre spectral sequence associated to the group extension 
	\begin{equation*}
	\xymatrix@=2pc{
		0 \ar[r] & G_S(L) \ar[r] & G_S(L') \ar[r]& \Gamma \ar[r] & 0,
	}
	\end{equation*}
	gives rise to the following exact sequence
	\begin{equation*}
	\xymatrix@=1pc{
		0 \ar[r] & H^1(\Gamma, \Q_p/\Z_p(i)^{G_S(L)}) \ar[r] & H^1(G_S(L'), \Q_p/\Z_p(i)) \ar[r] & H^1(G_S(L), \Q_p/\Z_p(i))^{\Gamma} \ar[r] &  0,
	}
	\end{equation*}
	since $\mathrm{cd} (\Gamma)\leq 1.$ We get the exact sequence of the lemma by applying Pontryagin  duality. 
\end{proof}
Let $F_\infty$ be a $\Z_p$-extension of $F$ and $F_n$ be the $n$-th layers of $F_\infty/F.$ Using \cite[Satz $6$, page $192$]{Schneider} we have for any $i\neq 0$
\begin{equation}\label{nth rank}
	\rank{\Z_{p}} \XS{-i}{F_n}= n_ip^n + \delta_{i,n},
\end{equation}
where $\delta_{i,n}=\rank{\Z_p} H_2(G_S(F_n), \Z_p(-i))$ and
 	\begin{equation*}
 n_i= \begin{cases}
 r_2 \qquad \:\:\mbox{ if } i \mbox{ is even}, \\
 r_1+r_2 \:\: \mbox{ if } i \mbox{ is odd.}
 \end{cases}
 \end{equation*}
\begin{prop}\label{the lambda rank}
	Let $i$ be an integer, and let $F_\infty$ be a $\Z_p$-extension of $F$ satisfying  $WLC_i.$ Then 
	\begin{equation*}
		\rank{\Lambda_\infty }\XS{-i}{F_\infty}= \begin{cases}
		r_2 \qquad \:\:\mbox{ if } i \mbox{ is even}, \\
		r_1+r_2 \:\: \mbox{ if } i \mbox{ is odd.}
		\end{cases}
	\end{equation*}
\end{prop}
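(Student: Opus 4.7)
The plan is to determine $r := \rank{\Lambda_\infty}\XS{-i}{F_\infty}$ by comparing the asymptotic growth of the $\Z_p$-ranks of the coinvariants of $\XS{-i}{F_\infty}$ with the finite-layer formula (\ref{nth rank}), using Lemma \ref{torsion} as the bridge and $WLC_i$ via Hochschild-Serre to bound the error term $\delta_{i,n}$.

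First I would apply Lemma \ref{torsion} with $L=F_\infty$ and $L'=F_n$ (the $n$-th layer of $F_\infty/F$), setting $\Gamma^{(n)}:=\Gal{F_\infty/F_n}\cong\Z_p$, to obtain the short exact sequence
$$0 \longrightarrow (\XS{-i}{F_\infty})_{\Gamma^{(n)}} \longrightarrow \XS{-i}{F_n} \longrightarrow (\Z_p(-i)_{G_S(F_\infty)})^{\Gamma^{(n)}} \longrightarrow 0,$$
where I have used $H_1(\Gamma^{(n)},A)=A^{\Gamma^{(n)}}$ as $\mathrm{cd}(\Gamma^{(n)})=1$. The case $n=0$ identifies $(\XS{-i}{F_\infty})_{\Gamma_\infty}$ as a subobject of the finitely generated $\Z_p$-module $\XS{-i}{F}$, so topological Nakayama makes $\XS{-i}{F_\infty}$ a finitely generated $\Lambda_\infty$-module and $r$ is well defined. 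The rightmost term embeds in the cyclic $\Z_p$-module $\Z_p(-i)_{G_S(F_\infty)}$, hence contributes $O(1)$ uniformly in $n$.

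Next, the structure theorem for finitely generated $\Lambda_\infty$-modules gives
$$\rank{\Z_p}(\XS{-i}{F_\infty})_{\Gamma^{(n)}} \;=\; r\,p^n + O(1),$$
since coinvariants of a free summand $\Lambda_\infty^r$ have $\Z_p$-rank $rp^n$ and those of the torsion part remain bounded. Plugging this together with (\ref{nth rank}) into the exact sequence yields the balance
$$r\,p^n + O(1) \;=\; n_i\,p^n + \delta_{i,n} + O(1),$$
so the conclusion $r=n_i$ will follow as soon as I prove that $\delta_{i,n}=\rank{\Z_p}H_2(G_S(F_n),\Z_p(-i))$ is bounded independently of $n$.

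This uniform bound on $\delta_{i,n}$ is precisely where $WLC_i$ is used and is the main obstacle. I would run the Hochschild-Serre spectral sequence for $1\to G_S(F_\infty)\to G_S(F_n)\to \Gamma^{(n)}\to 1$ with coefficients $\Q_p/\Z_p(i)$: the hypothesis $H^2(G_S(F_\infty),\Q_p/\Z_p(i))=0$ together with $\mathrm{cd}(\Gamma^{(n)})=1$ collapses the sequence to the isomorphism
$$H^2(G_S(F_n),\Q_p/\Z_p(i)) \;\cong\; H^1\bigl(\Gamma^{(n)},\,H^1(G_S(F_\infty),\Q_p/\Z_p(i))\bigr).$$
Taking Pontryagin duals and using $H^1(G_S(F_\infty),\Q_p/\Z_p(i))^\vee=\XS{-i}{F_\infty}$, the right-hand side dualizes to $(\XS{-i}{F_\infty})^{\Gamma^{(n)}}$, so $\delta_{i,n}=\rank{\Z_p}(\XS{-i}{F_\infty})^{\Gamma^{(n)}}$. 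The same structure-theoretic argument as before gives uniform boundedness: the free part has trivial $\Gamma^{(n)}$-invariants because $\sigma-1$ is a non-zero-divisor in the domain $\Lambda_\infty$, and the torsion part contributes only an $n$-independent constant. Hence $\delta_{i,n}=O(1)$, and comparing leading coefficients of $p^n$ in the balance above forces $r=n_i$.
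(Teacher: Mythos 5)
Your proof is correct and follows essentially the same strategy as the paper's: compare the $\Z_p$-ranks of the coinvariants of $\XS{-i}{F_\infty}$ with the finite-layer formula via Lemma \ref{torsion}, and control the defect $\delta_{i,n}$ by using $WLC_i$ and Hochschild--Serre to identify it with $\rank_{\Z_p}(\XS{-i}{F_\infty})^{\Gamma_{\infty,n}}$. The only stylistic difference is that you argue with asymptotic $O(1)$ bounds, while the paper obtains an exact cancellation of the $\delta_{i,n}$ terms by further invoking the equality of $\Z_p$-ranks of invariants and coinvariants of the torsion submodule, after first disposing separately of the two cases where $\Z_p(-i)_{G_S(F_\infty)}$ is infinite.
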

\begin{proof}
   Notice that the $\Z_{p}$-module $\Z_p(-i)_{G_S(F_\infty)}$ is not finite if and only if one of the following two cases holds:
	\begin{enumerate}
		\item $i = 0.$
		\item $F_\infty$ is the cyclotomic $\Z_p$-extension and $i\equiv 0 \mod{[F(\mu_p): F]}.$
	\end{enumerate}
	 The case $(1)$ is Proposition $1$ of \cite{GC}. In the case $(2),$ the group $G_S(F_\infty)$ acts trivially on $\Z_{p}(i),$ hence 
	 \begin{equation*}
	 	\XS{-i}{F_\infty}=\XSp{F_\infty}(-i).
	 \end{equation*}
	 Then one deduce the proposition by using \cite[Proposition $1$]{GC}. In the sequel we assume that we are out of these two cases. Let $\Gamma_{\infty, n}=\Gal{F_\infty/F_n}.$ The exact sequence (\ref{Exacte sequece of lemma}) and the equality (\ref{nth rank}) give that 
	\begin{equation}\label{first rank}
	\rank{\Z_p} \XS{-i}{F_\infty}_{\Gamma_{\infty, n}}= \rank{\Z_p} \XS{-i}{F_n}= n_ip^n + \delta_{i,n},
	\end{equation} 
	where $\delta_{i,n}=\rank{\Z_p} H_2(G_S(F_n), \Z_p(-i)).$ In particular, this shows that $\XS{-i}{F_\infty}$ is a finitely generated $\Lambda_\infty$-module. Let  $\rho$ be the $\Lambda_\infty$-rank of $\XS{-i}{F_\infty}.$ By the structure theorem of the $\Lambda_\infty$-module $\Fr{\Lambda_\infty}{\XS{-i}{F_\infty}}$ we have an exact sequence
	\begin{equation}\label{structure theorem}
	\xymatrix@=2pc{0 \ar[r] &\Fr{\Lambda_\infty}{\XS{-i}{F_\infty}} \ar[r]& \Lambda_{\infty}^\rho \ar[r] & Z \ar[r]& 0,		
	}
	\end{equation}
	where $Z$ is a finite $\Lambda_\infty$-module. Since $(\Lambda_{\infty}^\rho)^{\Gamma_{\infty, n}}=0,$ by the snake lemma, the exact sequence (\ref{structure theorem}) shows that 
	\begin{equation}\label{invariant of free is zero}
		\Fr{\Lambda_\infty}{\XS{-i}{F_\infty}}^{\Gamma_{\infty, n}} =0.
	\end{equation}
	The exact sequence 
	\begin{equation*}
	\xymatrix@=2pc{0 \ar[r] & \Tor{\Lambda_\infty}{\XS{-i}{F_\infty}} \ar[r]& \XS{-i}{F_\infty} \ar[r] & \Fr{\Lambda_\infty}{\XS{-i}{F_\infty}} \ar[r]& 0		
	}
	\end{equation*}
	and the snake lemma lead to the exact sequence 
	\begin{equation}\label{snake lemma}
	\xymatrix@=1pc{0 \ar[r] & \Tor{\Lambda_\infty}{\XS{-i}{F_\infty}}^{\Gamma_{\infty, n}} \ar[r]& \XS{-i}{F_\infty}^{\Gamma_{\infty, n}} \ar[r] & \: \Fr{\Lambda_\infty}{\XS{-i}{F_\infty}}^{\Gamma_{\infty, n}} \:	\ar@{-<}  `[d]   `[d] [d] \\ 
	0	& \Fr{\Lambda_\infty}{\XS{-i}{F_\infty}}_{\Gamma_{\infty, n}} \ar[l] & \XS{-i}{F_\infty}_{\Gamma_{\infty, n}} \ar[l] & \Tor{\Lambda_\infty}{\XS{-i}{F_\infty}}_{\Gamma_{\infty, n}} \ar[l]		
	}
	\end{equation}
	According to (\ref{invariant of free is zero}), the  exact sequence (\ref{snake lemma}) gives the exact sequence
	\begin{equation}\label{rank exact sequence}
	\xymatrix@=1pc{0 \ar[r] & \Tor{\Lambda_\infty}{\XS{-i}{F_\infty}}_{\Gamma_{\infty, n}} \ar[r]& \XS{-i}{F_\infty}_{\Gamma_{\infty, n}} \ar[r] & \Fr{\Lambda_\infty}{\XS{-i}{F_\infty}}_{\Gamma_{\infty, n}} \ar[r]& 0.		
	}
	\end{equation}
	and the isomorphism
	\begin{equation}\label{isomorphism invariant}
	(\Tor{\Lambda_\infty}{\XS{-i}{F_\infty}})^{\Gamma_{\infty, n}} \simeq \XS{-i}{F_\infty}^{\Gamma_{\infty, n}}.
	\end{equation}
	It is well known that (e.g \cite[page $337$]{Wingberg})
	\begin{equation}\label{isomorphism rank}
	\rank{\Z_p} (\Tor{\Lambda_\infty}{\XS{-i}{F_\infty}})^{\Gamma_{\infty, n}} = \rank{\Z_p} (\Tor{\Lambda_\infty}{\XS{-i}{F_\infty})}_{\Gamma_{\infty, n}}. 
	\end{equation}
	Since $H^2(G_S(F_\infty), \Q_p/\Z_p(i))=0$ (by $WLC_i$) and $cd(\Gamma_{\infty, n})\leq 1,$ the Hochschild-Serre spectral sequence associated to the group extension
	\begin{equation*}
	\xymatrix@=2pc{ 0 \ar[r] & G_S(F_\infty) \ar[r]& G_S(F_n) \ar[r] & \Gamma_{\infty, n} \ar[r]& 0,		
	}
	\end{equation*}
	gives the following isomorphism 
	\begin{equation*}
	H^2(G_S(F_n), \Q_p/\Z_p(i)) \simeq H^1(\Gamma_{\infty, n}, H^1(G_S(F_\infty), \Q_p/\Z_{p}(i))).
	\end{equation*}
	Then by the cohomology-homology duality we get 
	\begin{equation*}
	H_2(G_S(F_n), \Z_p(-i)) \simeq \XS{-i}{F_\infty}^{\Gamma_{\infty, n}}.
	\end{equation*}
	From (\ref{isomorphism invariant}) and (\ref{isomorphism rank}) it follows that 
	\begin{equation}\label{Leopoldt and torsion}
	\rank{\Z_p}(\Tor{\Lambda_\infty}{\XS{-i}{F_\infty}})_{\Gamma_{\infty, n}}= \delta_{i,n}.
	\end{equation}
	The $\Gamma_{\infty, n}$-homology of the exact sequence (\ref{structure theorem}) gives rise to the following exact sequence
		\begin{equation*}
	\xymatrix@=2pc{0 \ar[r] & Z^{\Gamma_{\infty, n}} \ar[r]& \Fr{\Lambda_\infty}{\XS{-i}{F_\infty}}_{\Gamma_{\infty, n}} \ar[r]& (\Lambda_{\infty}^\rho)_{\Gamma_{\infty, n}} \ar[r] & Z_{\Gamma_{\infty, n}} \ar[r]& 0.		
	}
	\end{equation*}
	Also we know that the $\Z_p$-rank of $(\Lambda_{\infty})_{\Gamma_{\infty, n}}$ is $p^n,$ since 
	\begin{equation*}
	(\Lambda_{\infty})_{\Gamma_{\infty, n}} \simeq \Z_p[[\Gamma_\infty/\Gamma_{\infty, n}]].
	\end{equation*}
	Then, the $\Z_p$-rank of $\Fr{\Lambda_\infty}{\XS{-i}{F_\infty}}_{\Gamma_{\infty, n}}$ equals to $\rho p^n.$\\
	According to (\ref{first rank}), by the exact sequence (\ref{rank exact sequence})  and the equality (\ref{Leopoldt and torsion}) we obtain that 
	\begin{equation*}
		\rho = n_i.
	\end{equation*}
\end{proof}
The following result is an extension of the above theorem to multiple $\Z_p$-extensions.
\begin{theo}\label{the lambda rank multiple}
	Let $L$ be a $\Z_{p}^d$-extension ($d \geq 1$) of $F,$ which contains a $\Z_p$-extension $F_\infty$ satisfying $WLC_i.$ Then 
	\begin{equation*}
	\rank{\Lambda_L }\XS{-i}{L}= \begin{cases}
	r_2 \qquad \:\:\mbox{ if } i \mbox{ is even}, \\
	r_1+r_2 \:\: \mbox{ if } i \mbox{ is odd.}
	\end{cases}
	\end{equation*}
\end{theo}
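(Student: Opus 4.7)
The plan is induction on $d$. The base case $d=1$ is exactly Proposition~\ref{the lambda rank}, applied to $L=F_\infty$.

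For $d\geq 2$, assume the theorem for $\Z_p^{d-1}$-extensions. Since $\Gal{L/F_\infty}\cong\Z_p^{d-1}$ admits a $\Z_p$ direct summand, pick one and let $L'$ be its fixed field, so that $L'$ is a $\Z_p^{d-1}$-extension of $F$ containing $F_\infty$, and $\Gamma:=\Gal{L/L'}\cong\Z_p$. Since $F_\infty\subseteq L'$ and $F_\infty$ satisfies $WLC_i$, the inductive hypothesis applies to $L'$ and gives $\rank{\Lambda_{L'}}\XS{-i}{L'}=n_i$. Applying Lemma~\ref{torsion} to the pair $L/L'$ yields the exact sequence
\begin{equation*}
0\to\XS{-i}{L}_\Gamma\to\XS{-i}{L'}\to H_1(\Gamma,\Z_p(-i)_{G_S(L)})\to 0.
\end{equation*}
The module $\Z_p(-i)_{G_S(L)}$ is a quotient of $\Z_p(-i)$, hence of finite type over $\Z_p$, so the rightmost term is $\Z_p$-finitely generated and a fortiori $\Lambda_{L'}$-torsion. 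Comparing $\Lambda_{L'}$-ranks gives $\rank{\Lambda_{L'}}\XS{-i}{L}_\Gamma=n_i$.

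To pass from this to the $\Lambda_L$-rank, I would invoke the standard identity
\begin{equation*}
\rank{\Lambda_L} M=\rank{\Lambda_{L'}} M_\Gamma-\rank{\Lambda_{L'}} M^\Gamma,
\end{equation*}
valid for every finitely generated $\Lambda_L$-module $M$ carrying a $\Gamma$-action. Writing $\Lambda_L=\Lambda_{L'}[[T]]$ with $T=\gamma-1$ for a topological generator $\gamma$ of $\Gamma$, one proves this identity by localizing at the height-one prime $(T)$, whose local ring is a DVR with residue field $K_{L'}=\mathrm{Frac}(\Lambda_{L'})$, and reading off the free and $(T)$-torsion summands of $M_{(T)}$. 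To apply the formula to $M=\XS{-i}{L}$ it remains to show $\XS{-i}{L}^\Gamma=0$. Iterating the preceding Proposition along the chain from $F_\infty$ up to $L'$ turns the hypothesis $WLC_i$ for $F_\infty$ into $H^2(G_S(L'),\Q_p/\Z_p(i))=0$. The Hochschild--Serre sequence for the pair $L/L'$ (using $\mathrm{cd}(\Gamma)\leq 1$) then embeds $H^1(G_S(L),\Q_p/\Z_p(i))_\Gamma$ into this vanishing $H^2$, and Pontryagin duality translates this into $\XS{-i}{L}^\Gamma=0$. The rank formula then yields $\rank{\Lambda_L}\XS{-i}{L}=n_i$, completing the induction.

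The main technical point is the passage between the $\Lambda_L$-rank of $M$ and the $\Lambda_{L'}$-rank of its $\Gamma$-coinvariants; once this classical fact is in hand, the remaining ingredients are just the direct use of Lemma~\ref{torsion}, the iterated propagation of $WLC_i$ supplied by the preceding Proposition, and Pontryagin duality.
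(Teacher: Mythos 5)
Your proposal is correct, and it takes a genuinely different route from the paper in the inductive step. The paper chooses $L'$ so that a topological generator $\gamma$ of $\Gamma = \Gal{L/L'}$ satisfies a genericity condition: $\gamma - 1$ is prime to the annihilator of $\Tor{\Lambda_L}{\XS{-i}{L}}$ (possible since only finitely many primes divide this annihilator, by Greenberg's Lemma 2). This genericity ensures $(\Tor{\Lambda_L}{\XS{-i}{L}})_\Gamma$ is $\Lambda_{L'}$-torsion, after which the paper splits $\XS{-i}{L}$ into its torsion and free parts, uses $\Fr{\Lambda_L}{\XS{-i}{L}}^\Gamma = 0$ (immediate from torsion-freeness), and reads the rank off the resulting short exact sequence of $\Gamma$-coinvariants. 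You instead apply the rank identity $\rank{\Lambda_L}M = \rank{\Lambda_{L'}}M_\Gamma - \rank{\Lambda_{L'}}M^\Gamma$ to $M = \XS{-i}{L}$ directly, and you kill the correction term by proving $M^\Gamma = 0$ outright, using the propagation of $WLC_i$ up to $L'$ (Proposition 2.1 of the paper), Hochschild--Serre for $L/L'$, and Pontryagin duality. This is a nice alternative: it removes the need for a generic choice of $\gamma$, works for any $\Z_p$-direct summand of $\Gal{L/F_\infty}$, and in fact the vanishing $M^\Gamma = 0$ you establish implies the paper's genericity condition automatically (since $\Tor{\Lambda_L}{\XS{-i}{L}}^\Gamma \subseteq M^\Gamma = 0$ forces $\rank{\Lambda_{L'}}\Tor{\Lambda_L}{\XS{-i}{L}}_\Gamma = 0$ by your own rank formula applied to the torsion submodule). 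The trade-off is that your route re-invokes the $WLC_i$ propagation at every stage of the induction, whereas the paper only uses it at the base. Two small points worth making explicit in a final write-up: the finite generation of $\XS{-i}{L}$ over $\Lambda_L$ (needed for your rank identity) follows from Lemma 2.2 together with topological Nakayama, and the identification $H^1(\Gamma, N) \cong N_\Gamma$ for the discrete $\Gamma$-module $N = H^1(G_S(L),\Q_p/\Z_p(i))$ is what converts the Hochschild--Serre injection into the dual statement $\XS{-i}{L}^\Gamma = 0$.
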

\begin{proof}
	We proceed by induction on $d.$ For $d=1,$ this is Proposition \ref{the lambda rank}.\\ Let $d\geq 2$ and $L'\subset L$ be a $\Z_{p}^{d-1}$-extension of $F$ satisfying 
	\begin{itemize}
		\item $L'$ contains $F_\infty$,
		\item the Galois group $\Gamma=\Gal{L/L'}$ is topologically generated by an element $\gamma \in \Gamma$ such that $\gamma -1$ is prime to the annihilator of $\Tor{\Lambda_L}{\XS{-i}{L}}.$ 
	\end{itemize}
     Such a choice is possible since the annihilator of $\Tor{\Lambda_L}{\XS{-i}{L}}$ is divisible by a finite number of primes (e.g \cite[Lemma 2]{GC}).
	 Let $\Gamma_{L'}=\Gal{L'/F}$ and $\Lambda_{L'}$ the associated Iwasawa algebra to $\Gamma_{L'}.$ This choice of $L'$ implies that $\Tor{\Lambda_L}{\XS{-i}{L}}_{\Gamma}$ is $\Lambda_{L'}$-torsion.\\
	Let $x$ be an element of $\Fr{\Lambda_L}{\XS{-i}{L}}.$ We have
	\begin{equation*}
	x \in \Fr{\Lambda_L}{\XS{-i}{L}}^{\Gamma} \mbox{ if and only if } (\gamma -1)x=0.
	\end{equation*}
	Since $\gamma -1$ is an element of $\Lambda_L$, $\Fr{\Lambda_L}{\XS{-i}{L}}$ is without $\Lambda_L$-torsion and $H_1(\Gamma, \Fr{\Lambda_L}{\XS{-i}{L}})=\Fr{\Lambda_L}{\XS{-i}{L}}^{\Gamma}$, we obtain that
	\begin{equation*}
	H_1(\Gamma, \Fr{\Lambda_L}{\XS{-i}{L}})=0.
	\end{equation*}
	Therefore, the $\Gamma$-homology of the exact sequence
	\begin{equation*}
	\xymatrix@=2pc{0 \ar[r] & \Tor{\Lambda_L}{\XS{-i}{L}} \ar[r]& \XS{-i}{L} \ar[r] & \Fr{\Lambda_L}{\XS{-i}{L}} \ar[r]& 0		}
	\end{equation*}
	gives rise to the exact sequence
	\begin{equation}\label{free quotient exact sequence}
	\xymatrix@=2pc{0 \ar[r] & \Tor{\Lambda_L}{\XS{-i}{L}}_{\Gamma} \ar[r]& \XS{-i}{L}_{\Gamma} \ar[r] & \Fr{\Lambda_L}{\XS{-i}{L}}_{\Gamma} \ar[r]& 0.		}
	\end{equation}
	Recall that $\Tor{\Lambda_L}{\XS{-i}{L}}_{\Gamma}$ is $\Lambda_{L'}$-torsion, hence it follows that
	\begin{eqnarray}
		\rank{\Lambda_{L'}}\XS{-i}{L}_{\Gamma}&=& \rank{\Lambda_{L'}} \Fr{\Lambda_L}{\XS{-i}{L}}_{\Gamma} \nonumber \\
		&=& \rank{\Lambda_L} \XS{-i}{L}, \label{rank 1}
	\end{eqnarray}
	since $\Fr{\Lambda_L}{\XS{-i}{L}}$ is without $\Lambda_L$-torsion and has the same $\Lambda_L$-rank as $\XS{-i}{L}$ (e.g \cite[page $90$]{GC}).\\
	The exact sequence (\ref{Exacte sequece of lemma}) shows that 
	\begin{equation}\label{rank 2}
		\rank{\Lambda_{L'}} \XS{-i}{L'} = \rank{\Lambda_{L'}} \XS{-i}{L}_{\Gamma} + \rank{\Lambda_{L'}} H_1(\Gamma, \Z_p(-i)_{G_S(L)}).
	\end{equation}
	Observe that $H_1(\Gamma, \Z_p(-i)_{G_S(L)})= (\Z_p(-i)_{G_S(L)})^{\Gamma},$ thus $H_1(\Gamma, \Z_p(-i)_{G_S(L)})$ is either finite or isomorphic to $\Z_p.$ In particular, $\rank{\Lambda_{L'}} H_1(\Gamma, \Z_p(-i)_{G_S(L)})$ is zero.
	Therefore, it follows from (\ref{rank 1}) and (\ref{rank 2}) that
	\begin{equation*}
	\rank{\Lambda_L}\XS{-i}{L}= \rank{\Lambda_{L'}} \XS{-i}{L'}.
	\end{equation*}
\end{proof}

\section{Projective dimension and a going up of torsion triviality}
The aim of this section is the study of the $S$-ramified Iwasawa modules in the view of the projective dimension and torsion sub-module. Precisely, we show that its projective dimension is at most one. Also we prove that, under $WLC_{i}$, the vanishing of its torsion goes up trough a tower of multiple $\Z_{p}$-extensions. 

\paragraph{\textbf{Projective dimension}}
Now we focus on the projective dimension of the twisted $S$-ramified Iwasawa module $\XS{-i}{L}.$ As in the previous section, we first examine the case where $L$ is a $\Z_p$-extension of $F.$
\begin{prop}\label{proj dim}
	Let $F_\infty$ be a $\Z_p$-extension of $F$ for which $WLC_i$ holds. Then   $\XS{-i}{F_{\infty}}$ is of projective dimension at most $1.$
\end{prop}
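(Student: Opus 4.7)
The plan is to invoke the Auslander--Buchsbaum formula for the two-dimensional regular local ring $\Lambda_\infty\simeq\Z_p[[T]]$: every finitely generated $\Lambda_\infty$-module $M$ satisfies $\mathrm{pd}_{\Lambda_\infty}(M)+\mathrm{depth}_{\Lambda_\infty}(M)=2$, so $\mathrm{pd}_{\Lambda_\infty}(M)\le 1$ is equivalent to $\mathrm{depth}_{\Lambda_\infty}(M)\ge 1$, which is in turn equivalent to $M$ having no non-zero finite $\Lambda_\infty$-submodule (i.e.\ no non-zero submodule annihilated by the maximal ideal $(p,\gamma-1)$). So the proof reduces to checking this depth condition for $X:=\XS{-i}{F_\infty}$.

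The key input is already available from the proof of Proposition \ref{the lambda rank}: under $WLC_i$, the Hochschild--Serre spectral sequence attached to $1\to G_S(F_\infty)\to G_S(F_n)\to \Gamma_{\infty,n}\to 1$, combined with $\mathrm{cd}(\Gamma_{\infty,n})\le 1$ and $H^2(G_S(F_\infty),\Q_p/\Z_p(i))=0$, yields the natural isomorphism
\begin{equation*}
X^{\Gamma_{\infty,n}}\;\simeq\;H_2(G_S(F_n),\Z_p(-i))\;\simeq\;H^2(G_S(F_n),\Q_p/\Z_p(i))^{\vee}.
\end{equation*}
Suppose for contradiction that $Y\subset X$ is a non-zero finite $\Lambda_\infty$-submodule. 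Since $Y$ is finite, it is fixed by $\Gamma_{\infty,n}$ for every $n\gg 0$, so $Y\subset X^{\Gamma_{\infty,n}}$ and, through the identification above, produces a compatible family of non-zero elements in the Pontryagin duals of the $H^2(G_S(F_n),\Q_p/\Z_p(i))$.

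The inclusions $X^{\Gamma_{\infty,n}}\hookrightarrow X^{\Gamma_{\infty,n+1}}$ are, by naturality of Hochschild--Serre in the tower $F\subset F_1\subset F_2\subset\cdots$, the Pontryagin-duals of a system of surjections among the $H^2(G_S(F_n),\Q_p/\Z_p(i))$. Passing to the direct limit along the Galois restriction maps gives $\varinjlim_n H^2(G_S(F_n),\Q_p/\Z_p(i))=H^2(G_S(F_\infty),\Q_p/\Z_p(i))=0$ by $WLC_i$; dualizing, this forces the compatible family to be zero and hence $Y=0$, the desired contradiction.

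The main obstacle is the careful functoriality check linking the tower of $\Gamma_{\infty,n}$-invariants of $X$ with the tower of Hochschild--Serre identifications across all layers -- one must verify that the natural inclusions among invariants match the correct (co)restriction maps on the $H^2$ side. A more conceptual alternative that bypasses this bookkeeping is to use Jannsen's perfect-complex formalism: the Iwasawa cochain complex $R\Gamma(G_S(F),\Lambda_\infty^{\natural}\otimes_{\Z_p}\Z_p(-i))$ is a perfect complex of $\Lambda_\infty$-modules whose effective amplitude shrinks to $[0,1]$ under $WLC_i$, immediately producing a length-one projective resolution of $X$ over $\Lambda_\infty$.
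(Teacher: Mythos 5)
Your opening reduction is fine and is in substance the same as the paper's (the paper cites Wingberg's criterion, which for $\Lambda_\infty\simeq\Z_p[[T]]$ is exactly the Auslander--Buchsbaum statement you wrote: $\mathrm{pd}\le 1$ iff there is no non-zero finite submodule, iff $X^{\Gamma_{\infty,n}}$ is $\Z_p$-free for some $n$). The identification $X^{\Gamma_{\infty,n}}\simeq H_2(G_S(F_n),\Z_p(-i))\simeq H^2(G_S(F_n),\Q_p/\Z_p(i))^{\vee}$ under $WLC_i$ is also correct and is the paper's isomorphism \eqref{isomorphisim proj} (at level $n=0$).

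The limit argument that follows, however, has a genuine gap, and it is precisely the ``functoriality check'' you flag at the end but do not carry out. Write $B=H^1(G_S(F_\infty),\Q_p/\Z_p(i))$, so $X=B^\vee$ and $X^{\Gamma_{\infty,n}}=(B_{\Gamma_{\infty,n}})^\vee$; under Hochschild--Serre, $H^2(G_S(F_n),\Q_p/\Z_p(i))\simeq B_{\Gamma_{\infty,n}}$. The inclusion $X^{\Gamma_{\infty,n}}\hookrightarrow X^{\Gamma_{\infty,n+1}}$ is the Pontryagin dual of the natural \emph{projection} $B_{\Gamma_{\infty,n+1}}\twoheadrightarrow B_{\Gamma_{\infty,n}}$, i.e.\ a corestriction-type map $H^2(G_S(F_{n+1}),\Q_p/\Z_p(i))\twoheadrightarrow H^2(G_S(F_n),\Q_p/\Z_p(i))$. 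This is the \emph{opposite} direction to the restriction maps, and the vanishing $\varinjlim_n H^2(G_S(F_n),\Q_p/\Z_p(i))=H^2(G_S(F_\infty),\Q_p/\Z_p(i))=0$ is a direct limit over \emph{restrictions}. So the compatible family attached to $Y$ lives in the wrong limit. Worse, if you instead dualize the restriction maps, you get trace maps $X^{\Gamma_{\infty,n+1}}\to X^{\Gamma_{\infty,n}}$ which on the fixed finite module $Y$ act as multiplication by $p$; then $\varprojlim(Y,\times p)=0$ automatically because $Y$ is finite, so no contradiction arises. The argument cannot be repaired by bookkeeping alone.

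What is missing is a single local input, and it makes the whole tower and its limits unnecessary. Since $\mathrm{cd}_p(G_S(F))\le 2$, the long exact sequence coming from $0\to\Z_p(i)\to\Q_p(i)\to\Q_p/\Z_p(i)\to 0$ shows that $H^2(G_S(F),\Q_p/\Z_p(i))$ is $p$-divisible; hence $H_2(G_S(F),\Z_p(-i))\simeq H^2(G_S(F),\Q_p/\Z_p(i))^\vee$ is a free $\Z_p$-module. Under $WLC_i$, the homological Hochschild--Serre spectral sequence for $1\to G_S(F_\infty)\to G_S(F)\to\Gamma_\infty\to 1$ (with $\mathrm{cd}(\Gamma_\infty)\le 1$ and $H_2(G_S(F_\infty),\Z_p(-i))=0$) gives $X^{\Gamma_\infty}=H_1(\Gamma_\infty,X)\simeq H_2(G_S(F),\Z_p(-i))$, which is therefore $\Z_p$-free. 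A $\Z_p$-free module contains no non-zero finite subgroup, so $X[\mathfrak m]=X^{\Gamma_\infty}[p]=0$ and your Auslander--Buchsbaum criterion concludes. This one-step argument is the paper's proof; your plan needed exactly this divisibility input rather than the direct-limit step.
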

\begin{proof}
	It is well known that $\XS{-i}{F_{\infty}}$ is of projective dimension at most $1$ if and only if $\XS{-i}{F_{\infty}}^{\Gamma_{\infty, n}}$ is $\Z_p$-free for some $n,$ where $\Gamma_{\infty, n}=\Gal{F_\infty/F_n}$ and $F_n$ is the $n$th layer of $F_\infty,$ \cite[Proposition $2.1$]{Wingberg}. We claim that $\XS{-i}{F_{\infty}}^{\Gamma_\infty}$ is $\Z_p$-free. \\
	Since $\mathrm{cd} (\Gamma_\infty)\leq 1,$ the homological Hoschilde-Serre spectral sequence  associated to the group extension
	\begin{equation*}
	\xymatrix@=2pc{0 \ar[r]& G_S(F_\infty) \ar[r] & G_S(F) \ar[r] & \Gamma_\infty  \ar[r]& 0}
	\end{equation*} 
	gives the following isomorphism 
	\begin{equation}\label{isomorphisim proj}
	H_1(\Gamma_\infty, \XS{-i}{F_{\infty}}) \simeq H_2(G_S(F), \Z_p(-i)).
	\end{equation}
	We know that $\mathrm{cd}(G_S(F)) \leq 2,$ so the $G_S(F)$-cohomology of the exact sequence 
	\begin{equation*}
	\xymatrix@=2pc{ 0 \ar[r] & \Z_p(i) \ar[r] & \Q_p(i) \ar[r]& \Q_p/\Z_p(i) \ar[r]& 0}
	\end{equation*}
	shows that $H^2(G_S(F), \Q_p/\Z_p(i))$ is divisible. Hence $H_2(G_S(F), \Z_p(-i))$ is a free $\Z_p$-module. Recall that 
	\begin{equation*}
	\XS{-i}{F_{\infty}}^{\Gamma_\infty} = H_1(\Gamma_\infty, \XS{-i}{F_{\infty}}).
	\end{equation*}
	It follows then from the isomorphism (\ref{isomorphisim proj}) that $\XS{-i}{F_{\infty}}^{\Gamma_\infty}$ is  $\Z_{p}$-free which means that $\XS{-i}{F_{\infty}}$ is of projective dimension at most $1.$
\end{proof}
For any module $M$ over $\Z_{p}[[\Gamma]],$ with $\Gamma \simeq \Z_{p},$ we have an equivalence between $\mathrm{pd}_{\Z_{p}[[\Gamma]]} M\leq 1$ and $M$ has no non trivial pseudo-null sub-module. However, this equivalence is not in general true when $\Gamma\simeq \Z_{p}^d$ with $d\geq 2$ (c.f \cite[Proposition $6$ chap. I]{Perin Riou}, see also $(2)$ of Remark \ref{remark proj dim}). To extend Proposition \ref{proj dim} to multiple $\Z_p$-extensions, we start with the study of pseudo-null sub-modules. For $i=0$ we have the following result of Greenberg.
\begin{prop}[\cite{GC}, Proposition $5$] \label{Greenberg's proposition}
	Assume that Leopoldt's conjecture is valid for $F.$ Let $L$ be a $\Z_{p}^d$-extension of $F$ ($d\geq 1$). Then $\XSp{L}$ contains no non trivial pseudo-null $\Lambda_L$-sub-module.
\end{prop}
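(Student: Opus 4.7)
My plan is to reproduce Greenberg's original strategy by inducting on $d=\rank{\Z_p}\Gamma_L$. For the base case $d=1$, Leopoldt's conjecture for $F$ propagates, via the Hochschild--Serre argument of Remark \ref{Leopoldt tordue}(1), to $WLC_{0}$ for every $\Z_p$-extension $F_\infty/F$. The proof of Proposition \ref{proj dim} then applies verbatim at $i=0$ (it uses only $WLC_i$ and the bound $\mathrm{cd}(G_S(F))\leq 2$), giving $\mathrm{pd}_{\Lambda_\infty}\XSp{F_\infty}\leq 1$. Since $\Lambda_\infty\simeq\Z_p[[T]]$ is a regular local ring of Krull dimension two in which pseudo-null modules are exactly the finite ones, projective dimension at most one is equivalent to the absence of non-zero pseudo-null sub-modules, which settles this case.

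For the inductive step, assume the statement for every $\Z_p^{d-1}$-extension of $F$, and let $M\subset\XSp{L}$ be the maximal pseudo-null $\Lambda_L$-sub-module, assumed non-zero for contradiction. Using the prime-avoidance lemma \cite[Lemma~$2$]{GC}, choose a $\Z_p$-direct summand $\Gamma=\Gal{L/L'}$ of $\Gamma_L$ topologically generated by some $\gamma$, so that $L'$ is a $\Z_p^{d-1}$-extension of $F$ and $\gamma-1$ is coprime to the finitely many height-one primes carrying the torsion and associated-prime data of $\XSp{L}$ and of $M$. In particular $M^{\Gamma}=0$, and a height count shows that $M_{\Gamma}$ is pseudo-null as a $\Lambda_{L'}$-module. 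Lemma \ref{torsion} at $i=0$ supplies the descent sequence
\begin{equation*}
0\to\XSp{L}_{\Gamma}\to\XSp{L'}\to\Z_p\to 0,
\end{equation*}
which lets the inductive hypothesis applied to $L'$ transfer to $\XSp{L}_{\Gamma}$: neither contains a non-zero pseudo-null $\Lambda_{L'}$-sub-module. Consequently the natural image of $M_{\Gamma}$ in $\XSp{L}_{\Gamma}$ vanishes, yielding the inclusion $M\subset(\gamma-1)\XSp{L}$.

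The main obstacle, and the real technical core, is upgrading this inclusion to $M=0$. The natural device is the auxiliary module $\widetilde M:=\{x\in\XSp{L}:(\gamma-1)x\in M\}$; the snake-lemma sequence obtained from $0\to M\to\XSp{L}\to\XSp{L}/M\to 0$ under multiplication by $\gamma-1$ embeds $\widetilde M/M$ into an extension of a pseudo-null $\Lambda_{L'}$-module (a sub-quotient of $M_{\Gamma}$) by $\XSp{L}^{\Gamma}$, and one must leverage the rank formula of Theorem \ref{the lambda rank multiple} at $i=0$ together with the prime-avoidance choices to show that $\widetilde M/M$ is pseudo-null as a $\Lambda_L$-module. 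The maximality of $M$ then forces $\widetilde M=M$, hence $M=(\gamma-1)M$, whence Nakayama's lemma applied to the finitely generated $\Lambda_L$-module $M$ (with $\gamma-1$ in the maximal ideal of the Noetherian local ring $\Lambda_L$) yields $M=0$, contradicting $M\neq 0$. The subtle bookkeeping required to ensure that $\widetilde M/M$ is genuinely pseudo-null---and not merely $\Lambda_L$-torsion---is where I expect the delicate work to lie.
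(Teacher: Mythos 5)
The paper does not reprove this statement; it simply cites Greenberg \cite[Proposition 5]{GC}, and the paper's own contribution (Proposition 3.4) is presented only as a ``(Sketch)'' adaptation of Greenberg's argument, with $LC_i$ playing the role of Leopoldt's conjecture. So the comparison has to be against Greenberg's actual method, which your outline approximates.

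Your global structure (base case via $\mathrm{pd}\leq 1$, inductive step via descent and the auxiliary module $\widetilde M$) is the right shape, and your base case is correct. But there is a genuine gap in the inductive step, and it is precisely where Leopoldt's conjecture has to enter. You claim ``In particular $M^\Gamma=0$'' as a consequence of choosing $\gamma-1$ coprime to the height-one prime data of $\XSp{L}$ and $M$. This does not follow: a pseudo-null $M$ has \emph{no} associated primes of height one, so the stated condition on $M$ is vacuous, and the associated primes of $M$ (height $\geq 2$) can include primes such as $(T_1,\dots,T_d)$ that contain $\gamma-1$ for every choice of $\gamma$, so prime avoidance simply cannot force $M^\Gamma=0$. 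You also flag the verification that $\widetilde M/M$ is pseudo-null as ``the subtle bookkeeping'' you haven't completed. Both gaps are resolved at once by the observation you are missing: under Leopoldt's conjecture for $F$, weak Leopoldt holds for the $\Z_p^{d-1}$-extension $L'$, and then the Hochschild--Serre homology spectral sequence for $G_S(L)\trianglelefteq G_S(L')$ gives a surjection $H_2(G_S(L'),\Z_p)\twoheadrightarrow H_1(\Gamma,\XSp{L})=\XSp{L}^{\Gamma}$, whence $\XSp{L}^{\Gamma}=0$. This immediately yields $M^\Gamma=0$ (it is a submodule of $\XSp{L}^\Gamma$), and it collapses your snake-lemma description of $\widetilde M/M$: since $\XSp{L}^\Gamma=0$, the snake sequence gives an \emph{injection} $\widetilde M/M=(\XSp{L}/M)^\Gamma\hookrightarrow M_\Gamma$, so $\widetilde M/M$ is a submodule of a quotient of the pseudo-null $M$, hence pseudo-null over $\Lambda_L$ with no further work. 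Once $M^\Gamma=0$ is known, the height count giving $\Lambda_{L'}$-pseudo-nullity of $M_\Gamma$ (needed for the descent to $\XSp{L'}$) is also clean. In short, your outline is salvageable, but the hypothesis ``Leopoldt's conjecture for $F$'' must be used in the inductive step via weak Leopoldt for $L'$ to kill $\XSp{L}^\Gamma$; your proposal never invokes it there, and the prime-avoidance substitute you offer does not work.
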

The next result is a generalization of Proposition \ref{Greenberg's proposition}.
\begin{prop}\label{pseudo null sub-modules}
	Let $i\neq 1$ be an integer and suppose that $F$ satisfies $LC_i.$ Let $L$ be a $\Z_{p}^d$-extension ($d\geq 1$)  of $F.$ Then $\XS{-i}{L}$ contains no non trivial pseudo-null sub-module.
\end{prop}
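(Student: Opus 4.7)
The plan is to induct on $d$. For $d=1$: by Remark \ref{Leopoldt tordue}(1), $LC_i$ for $F$ propagates to $WLC_i$ for every $\Z_p$-extension of $F$, so Proposition \ref{proj dim} gives $\mathrm{pd}_{\Lambda_\infty}\XS{-i}{F_\infty} \leq 1$. Since $\Lambda_\infty = \Z_p[[T]]$ is a regular local ring of Krull dimension two, the Auslander--Buchsbaum formula forces $\mathrm{depth}\,\XS{-i}{F_\infty} \geq 1$, which precludes any nonzero $\mathfrak{m}$-torsion; over $\Z_p[[T]]$ every pseudo-null module is finite and hence has $\mathfrak{m}$-torsion, which settles the base case.

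For the inductive step, assume the result for $\Z_p^{d-1}$-extensions and let $Y \subset \XS{-i}{L}$ be a pseudo-null submodule. A key preliminary is the vanishing $\XS{-i}{L}^\Gamma = 0$ for every $\Z_p$-direct summand $\Gamma \subset \Gamma_L$ whose fixed field $L' := L^\Gamma$ is a $\Z_p^{d-1}$-extension of $F$. Indeed, by the going-up proposition preceding Remark \ref{Leopoldt tordue} together with Remark \ref{Leopoldt tordue}(1), $LC_i$ for $F$ gives $WLC_i$ for both $L$ and $L'$; applying Hochschild--Serre with coefficients $\Q_p/\Z_p(i)$ to the group extension $1 \to G_S(L) \to G_S(L') \to \Gamma \to 1$, and using $\mathrm{cd}(\Gamma) \leq 1$ together with $H^2(G_S(L), \Q_p/\Z_p(i)) = 0$, one obtains an injection $H^1(\Gamma, H^1(G_S(L), \Q_p/\Z_p(i))) \hookrightarrow H^2(G_S(L'), \Q_p/\Z_p(i)) = 0$, and Pontryagin duality then yields $\XS{-i}{L}^\Gamma = 0$.

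Now choose, via Greenberg's genericity argument \cite[Lemma 2]{GC}, a topological generator $\gamma$ of some such $\Gamma$ so that $\gamma - 1$ avoids each of the finitely many associated primes of $Y$, making $\gamma - 1$ regular on $Y$. Then $Y_\Gamma = Y/(\gamma - 1)Y$ has Krull dimension $\leq \dim Y - 1 \leq d - 2$, hence is pseudo-null as a $\Lambda_{L'}$-module. By Lemma \ref{torsion}, $(\XS{-i}{L})_\Gamma$ injects into $\XS{-i}{L'}$, so the image of $Y$ in $\XS{-i}{L'}$ is a pseudo-null $\Lambda_{L'}$-submodule (being a quotient of $Y_\Gamma$) and vanishes by the inductive hypothesis; thus $Y \subset (\gamma - 1)\XS{-i}{L}$. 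Writing $M := \XS{-i}{L}$ and exploiting $M^\Gamma = 0$, multiplication by $\gamma - 1$ is injective on $M$, so $Z_1 := \{m \in M : (\gamma - 1)m \in Y\}$ is $\Lambda_L$-isomorphic to $Y$ via $\gamma - 1$; in particular $Z_1$ is pseudo-null with the same associated primes as $Y$. Reapplying the previous argument to $Z_1$ in place of $Y$ gives $Z_1 \subset (\gamma - 1)M$, so $Y = (\gamma - 1)Z_1 \subset (\gamma - 1)^2 M$, and iterating yields $Y \subset (\gamma - 1)^n M$ for every $n \geq 1$; Krull's intersection theorem in the local Noetherian ring $\Lambda_L$ then forces $Y = 0$. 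The main technical obstacle is the preliminary vanishing $\XS{-i}{L}^\Gamma = 0$: it is there that $LC_i$ is used in full, via its propagation to $WLC_i$ at both $L$ and $L'$.
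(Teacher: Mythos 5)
Your proof is correct, and it is essentially a fleshed-out version of the argument the paper alludes to: an induction on $d$ in the spirit of Greenberg's Proposition 5 in \cite{GC}, driven by the injectivity of $(\XS{-i}{L})_\Gamma \hookrightarrow \XS{-i}{L'}$ from Lemma \ref{torsion} and the vanishing $\XS{-i}{L}^\Gamma = 0$ extracted from $LC_i$ via $WLC_i$ and Hochschild--Serre. The choices of packaging — Auslander--Buchsbaum in the base case instead of a direct no-finite-submodule argument, and Krull's intersection theorem instead of a Noetherian ascending-chain finale — are equivalent to what Greenberg does and do not change the approach.
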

\begin{proof}(Sketch)
	The proof of this proposition is an adaptation of that of \cite[Proposition $5$]{GC}. It suffices to take $\XS{-i}{L}$ instead of $\XSp{L}$ and remark that the restriction map 
	\begin{equation*}
	\xymatrix@=2pc{
		(\XS{-i}{L})_{\Gamma} \ar[r]& \XS{-i}{L'},
	}
	\end{equation*}
	is injective for any integer $i$ and any $\Z_{p}^{d-1}$-subextension $L'$ of $L$(see Lemma \ref{torsion}). Also we replace Leopoldt's conjecture by $LC_i.$
\end{proof}
Now we can prove that the projective dimension of $\XS{-i}{L}$ over $\Lambda_{L}$ is at most one when $i\neq 1$ and $i\not \equiv  0 \mod{[F(\mu_p):F]}.$
\begin{theo}\label{multiple projective dimension}
	Let $i \neq 1$ be an integer such that $i\not \equiv 0 \mod{[F(\mu_p):F]}$ and $F$ satisfies $LC_i.$ For any $\Z_{p}^d$-extension $L$ of $F,$ the $\Lambda_{L}$-module $\XS{-i}{L}$ is of projective dimension at most $1.$
\end{theo}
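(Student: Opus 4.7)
The strategy is induction on the $\Z_p$-rank $d$ of $\Gamma_L$. The base case $d=1$ is exactly Proposition \ref{proj dim}: it applies because the hypothesis $LC_i$ at $F$ implies $WLC_i$ for any $\Z_p$-extension of $F$ by Remark \ref{Leopoldt tordue}$(1)$.

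For the inductive step, let $L/F$ be a $\Z_p^d$-extension with $d\geq 2$ and set $M := \XS{-i}{L}$. Proposition \ref{pseudo null sub-modules} (which consumes both hypotheses $i\neq 1$ and $LC_i$) tells us that $M$ has no non-trivial pseudo-null $\Lambda_L$-submodule, whence every associated prime of $M$ has height at most one. Since $\Lambda_L$ is a regular local ring, and hence a UFD, its height-one primes are principal, and there are only finitely many associated primes in total. Mimicking the choice made in the proof of Theorem \ref{the lambda rank multiple} (via \cite[Lemma 2]{GC}), I would select a $\Z_p^{d-1}$-subextension $L'\subset L$ and a topological generator $\gamma$ of $\Gamma := \Gal{L/L'}\simeq \Z_p$ so that $t := \gamma -1$ avoids every associated prime of $M$. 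Then $t$ is a non-zero-divisor on both $\Lambda_L$ and $M$, and $\Lambda_L/t\Lambda_L = \Lambda_{L'}$.

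Next I identify $M/tM$ with $\XS{-i}{L'}$ as $\Lambda_{L'}$-modules. Lemma \ref{torsion} provides the exact sequence
\begin{equation*}
0 \to M/tM \to \XS{-i}{L'} \to H_1(\Gamma, \Z_p(-i)_{G_S(L)}) \to 0,
\end{equation*}
so it suffices to show that the rightmost term vanishes, for which it is enough to show $\Z_p(-i)_{G_S(L)}=0$. Because $[F(\mu_p):F]$ divides $p-1$, it is coprime to $p$; combined with the fact that $L/F$ is pro-$p$, this forces the projection $G_S(L)\twoheadrightarrow \Gal{F(\mu_p)/F}$ to be surjective. The hypothesis $i\not\equiv 0\pmod{[F(\mu_p):F]}$ then produces some $g\in G_S(L)$ whose action on $\Z_p(-i)/p\Z_p(-i)$ is multiplication by a scalar $\not\equiv 1\pmod p$. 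Hence $(\Z_p(-i)/p)_{G_S(L)}=0$, and Nakayama's lemma yields $\Z_p(-i)_{G_S(L)}=0$, collapsing the exact sequence to $M/tM\simeq \XS{-i}{L'}$.

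Finally, because $t$ is regular on the regular local ring $\Lambda_L$ and on $M$, tensoring a minimal free $\Lambda_L$-resolution of $M$ with $\Lambda_L/t\Lambda_L=\Lambda_{L'}$ yields a minimal free $\Lambda_{L'}$-resolution of $M/tM$ of the same length; equivalently, $\mathrm{Ext}^\ast_{\Lambda_L}(M,\mathbb{F}_p)\simeq \mathrm{Ext}^\ast_{\Lambda_{L'}}(M/tM,\mathbb{F}_p)$. Consequently
\begin{equation*}
\mathrm{pd}_{\Lambda_L} M \;=\; \mathrm{pd}_{\Lambda_{L'}}(M/tM) \;=\; \mathrm{pd}_{\Lambda_{L'}}\XS{-i}{L'} \;\leq\; 1
\end{equation*}
by the induction hypothesis, closing the proof. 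The main hurdle is exhibiting the $M$-regular element $t$: this hinges on Proposition \ref{pseudo null sub-modules} to control the heights of the associated primes of $M$, after which the genericity argument of \cite[Lemma 2]{GC} suffices to conclude.
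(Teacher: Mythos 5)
Your proof is correct and follows essentially the same route as the paper: induction on $d$, Proposition \ref{pseudo null sub-modules} to rule out pseudo-null submodules, a generic choice of $\gamma$ so that $\gamma-1$ is regular on $\XS{-i}{L}$, and the descent isomorphism $(\XS{-i}{L})_{\Gamma}\simeq\XS{-i}{L'}$ from Lemma~\ref{torsion}. The only presentational difference is that the paper cites \cite[Lemme 5, chap.~I]{Perin Riou} for the equality $\mathrm{pd}_{\Lambda_L}M=\mathrm{pd}_{\Lambda_{L'}}M/(\gamma-1)M$, whereas you supply the underlying regular-element/minimal-resolution argument directly.
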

\begin{proof}
	We prove this theorem by induction on $d.$ The case $d=1$ is Proposition \ref{proj dim}. For $d\geq 2$ we choose a subgroup $\Gamma\simeq \Z_p$ of $\Gamma_L$ and denote by $L'$ the subfield of $L$ fixed by $\Gamma.$ 
	Recall that 
	\begin{equation*}
	H_0(G_S(L), \Z_p(-i))=0 \mbox{  if and only if }  i \not\equiv 0 \mod{[F(\mu_p):F]}.
	\end{equation*}
	When the integer $i$ satisfies $i\not\equiv 0 \mod{[F(\mu_p):F]},$ we obtain by Lemma \ref{torsion} the isomorphism 
	\begin{equation}\label{isomorphism 1}
	(\XS{-i}{L})_{_{\Gamma}} \simeq \XS{-i}{L'}.
	\end{equation}
	By Proposition \ref{pseudo null sub-modules}, the $\Lambda_{L}$-module $\XS{-i}{L}$ has no non trivial pseudo-null sub-module. According to the proof of \cite[Lemme $5,$ chap. I]{Perin Riou}, we have 
	\begin{equation*}
	\mathrm{pd}_{\Lambda_{L}}(\XS{-i}{L}) = \mathrm{pd}_{\Lambda_{L}/\mathfrak{q}}(\XS{-i}{L}/\mathfrak{q}\XS{-i}{L}),
	\end{equation*}
	for every prime ideal $\mathfrak{q}$ of height $1$ such that $\Lambda_{L}/\mathfrak{q}$ is a regular ring and $\mathfrak{q}$ does not divide the characteristic series of $\XS{-i}{L}.$ \\
	Notice that $(\Lambda_{L})_{\Gamma} \simeq \Lambda_{L'}$ which is a regular ring.
	Let $\gamma$ be a topological generator of $\Gamma.$ Then if we choose $\Gamma$ such that $\gamma-1$ does not divide the characteristic series of $\XS{-i}{L}$ (such a choice always exists, since the characteristic series of $\XS{-i}{L}$ is divisible by a finite number of primes) we obtain that 
	\begin{equation*}
	\mathrm{pd}_{\Lambda_{L}}(\XS{-i}{L}) = \mathrm{pd}_{\Lambda_{L'}}((\XS{-i}{L})_{\Gamma}).
	\end{equation*}  
	Therefore, it follows from (\ref{isomorphism 1}) that 
	\begin{equation*}
	\mathrm{pd}_{\Lambda_{L}}(\XS{-i}{L}) = \mathrm{pd}_{\Lambda_{L'}}(\XS{-i}{L'}).
	\end{equation*}
\end{proof}
\begin{rema} \label{remark proj dim}
	\begin{enumerate}
		\item For $i=1$ the above theorem is still true if $L$ contains $F^c$. This follows from the fact that $LC_j$ holds for $j\geq 2,$ and 
		\begin{equation*}
			\XS{-i}{L}=\XS{-j}{L}(j-i) \mbox{ for any } j\equiv i \mod{[F(\mu_p):F]}.
		\end{equation*} 
		\item When $i = 0$ and $L$ satisfies $WLC_i,$ Nguyen Quang Do (\cite[Proposition 3.4]{Nguyen83}) shows that 
		\begin{itemize}
			\item $\mathrm{pd}_{\Lambda_{L}}\XSp{L} \leq 1$ if $d\leq 2$ and,
			\item $\mathrm{pd}_{\Lambda_{L}}\XSp{L} =d-2$ if $d>2.$
		\end{itemize}	
		Notice that if $i \equiv 0 \mod{[F(\mu_p):F]}$ and $L$ contains the cyclotomic $\Z_{p}$-extension, the discussion in the proof of Proposition \ref{the lambda rank} shows that 
		\begin{equation*}
		\XS{-i}{L}=\XSp{L}(-i).
		\end{equation*}
		Then by \cite[Proposition 3.4]{Nguyen83}, we have
		\begin{itemize}
			\item $\mathrm{pd}_{\Lambda_{L}}\XS{-i}{L} \leq 1$ if $d\leq 2$ and,
			\item $\mathrm{pd}_{\Lambda_{L}}\XS{-i}{L} =d-2$ if $d>2.$
		\end{itemize}
		In particular, in this case, if we suppose also that $d>3$ we find that $\mathrm{pd}_{\Lambda_{L}}\XS{-i}{L}>1.$ This is one of the reasons why we assume that $i \not\equiv 0 \mod{[F(\mu_p):F]}.$ Another reason is that our proof uses the isomorphism \eqref{isomorphism 1}, which is not true when $i \equiv 0 \mod{[F(\mu_p):F]}.$
	\end{enumerate}
\end{rema}

\paragraph{\textbf{Going up for torsion triviality}}
Now we prove that the triviality of the $\tilde{\Lambda}$-torsion of the twisted $S$-ramified Iwasawa module of $\tilde{F}$ can be detected earlier from a $\Z_{p}$-extension of the base field $F.$ Fix a tower of $\Z_p$-extensions 
\begin{equation*}
F_\infty= F^{(1)} \subset \cdots \subset F^{(d)} \cdots \subset F^{(r)}=\tilde{F},
\end{equation*}
where $F_\infty$ is a $\Z_p$-extension of $F$ satisfying $WLC_i$ and $r=r_2+1+\delta_F.$ The choice of such a $\Z_p$-extension is always possible since the cyclotomic $\Z_p$-extension $F^{c}$ verifies $WLC_i.$ For simplicity, we will write $\X{d}{i}$ instead of $\XS{i}{F^{(d)}}$ and $\Lambda_{(d)}$ instead of $\Lambda_{F^{(d)}}.$ We start with the following helpful lemma (compare with \cite[$1.$ of Lemma $4$]{Perin Riou}).
\begin{lemm} \label{coinvariante is torsion}
	For any integer $1\leq d \leq r=r_2+1+\delta_F,$  $(\Tor{\Lambda_{(d+1)}}{\X{d+1}{-i}})_\Gamma$ is a $\Lambda_{(d)}$-torsion module, where $\Gamma$ is the Galois group of the extension $F^{(d+1)}/F^{(d)}.$
\end{lemm}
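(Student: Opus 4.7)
The plan is to compare $\Lambda_{(d)}$-ranks: if I can show that both $(\X{d+1}{-i})_\Gamma$ and the $\Gamma$-coinvariants of the torsion-free quotient $M := \Fr{\Lambda_{(d+1)}}{\X{d+1}{-i}}$ have $\Lambda_{(d)}$-rank $n_i$, then the short exact sequence obtained by taking $\Gamma$-coinvariants of $0 \to T_{d+1} \to \X{d+1}{-i} \to M \to 0$, where $T_{d+1} := \Tor{\Lambda_{(d+1)}}{\X{d+1}{-i}}$, will force $(T_{d+1})_\Gamma$ to have $\Lambda_{(d)}$-rank zero, i.e.\ to be $\Lambda_{(d)}$-torsion.

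First I would handle the rank of $(\X{d+1}{-i})_\Gamma$ using results already at hand. Since $F_\infty$ satisfies $WLC_i$ and sits inside $F^{(d)} \subset F^{(d+1)}$, Theorem \ref{the lambda rank multiple} applied to both fields gives $\rank{\Lambda_{(d)}}\X{d}{-i} = \rank{\Lambda_{(d+1)}}\X{d+1}{-i} = n_i$. Lemma \ref{torsion} provides an injection $(\X{d+1}{-i})_\Gamma \hookrightarrow \X{d}{-i}$ whose cokernel $H_1(\Gamma, \Z_p(-i)_{G_S(F^{(d+1)})})$ has $\Z_p$-rank at most one (as observed in the proof of Theorem \ref{the lambda rank multiple}), and is therefore $\Lambda_{(d)}$-torsion for $d \geq 1$. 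Thus $(\X{d+1}{-i})_\Gamma$ has $\Lambda_{(d)}$-rank $n_i$. Next, $M^\Gamma = 0$: being torsion-free over the integral domain $\Lambda_{(d+1)}$, $M$ has no $(\gamma - 1)$-torsion. Taking $\Gamma$-coinvariants of the defining sequence of $M$ therefore yields
\[
0 \to (T_{d+1})_\Gamma \to (\X{d+1}{-i})_\Gamma \to M_\Gamma \to 0.
\]

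The main obstacle is computing $\rank{\Lambda_{(d)}} M_\Gamma$, which I would tackle via the structure theorem. Embedding $M$ into $\Lambda_{(d+1)}^{n_i}$ with pseudo-null cokernel $Z$ and taking the long exact sequence of $\Gamma$-homology yields, using $(\Lambda_{(d+1)}^{n_i})^\Gamma = 0$, the exact sequence
\[
0 \to Z^\Gamma \to M_\Gamma \to \Lambda_{(d)}^{n_i} \to Z_\Gamma \to 0.
\]
It then suffices to show that $Z^\Gamma$ and $Z_\Gamma$ are $\Lambda_{(d)}$-torsion. The key point is a height argument: $\mathrm{Ann}_{\Lambda_{(d+1)}}(Z)$ has height at least $2$ by pseudo-nullity, so it cannot be contained in the height-one prime $(\gamma - 1)\Lambda_{(d+1)}$; any $f \in \mathrm{Ann}(Z)$ outside this prime projects to a nonzero $\bar{f} \in \Lambda_{(d)}$ that annihilates both $Z^\Gamma$ and $Z_\Gamma$ (since on $\Gamma$-fixed elements the action of $f$ factors through its image in $\Lambda_{(d+1)}/(\gamma-1) = \Lambda_{(d)}$). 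Consequently $\rank{\Lambda_{(d)}} M_\Gamma = n_i$, and comparing ranks in the previous short exact sequence gives $\rank{\Lambda_{(d)}} (T_{d+1})_\Gamma = 0$, which is exactly the desired conclusion.
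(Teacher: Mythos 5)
Your overall strategy is the same as the paper's: pass to $\Gamma$-coinvariants in the torsion/free SES, observe $M^\Gamma=0$, and compare $\Lambda_{(d)}$-ranks using $\rank{\Lambda_{(d)}}(\X{d+1}{-i})_\Gamma = n_i$ (via Lemma \ref{torsion} and Theorem \ref{the lambda rank multiple}) and $\rank{\Lambda_{(d)}} M_\Gamma = n_i$. The difference is that the paper simply \emph{asserts} the latter rank identity, whereas you attempt to prove it — and that attempt contains a genuine gap.

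The gap is the claim that the torsion-free module $M$ embeds into $\Lambda_{(d+1)}^{n_i}$ with \emph{pseudo-null} cokernel $Z$. This is false in general. What the theory of modules over a Krull domain gives is an embedding of $M$ into its reflexive hull $M^{**}$ with pseudo-null cokernel, and $M^{**}$ need not be free once $\Lambda_{(d+1)}$ has Krull dimension $\geq 3$. For instance, over $\Z_p[[T_1,T_2]]$ the second syzygy $\Omega^2$ of the residue field (the kernel of $\Lambda^3 \to \mathfrak m$, from the Koszul complex on $p,T_1,T_2$) is reflexive of rank $2$ but not free; if it embedded into $\Lambda^2$ with pseudo-null cokernel, the reflexivity of both sides would force an isomorphism $\Omega^2 \cong \Lambda^2$, a contradiction. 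So the source of pseudo-nullity in your height argument is not available, and $Z^\Gamma, Z_\Gamma$ cannot be concluded $\Lambda_{(d)}$-torsion from that step.

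The height argument itself is the right idea, but it must be applied only to a genuinely pseudo-null piece. The standard repair is to go the other way: choose $n_i$ $\Lambda_{(d+1)}$-independent elements of $M$, giving $0 \to \Lambda_{(d+1)}^{n_i} \to M \to N \to 0$ with $N$ a \emph{torsion} (not necessarily pseudo-null) module. Since $M^\Gamma = 0$ and $(\Lambda_{(d+1)}^{n_i})^\Gamma = 0$, the long exact sequence gives $0 \to N^\Gamma \to \Lambda_{(d)}^{n_i} \to M_\Gamma \to N_\Gamma \to 0$, hence $\rank{\Lambda_{(d)}} M_\Gamma = n_i + \rank{\Lambda_{(d)}} N_\Gamma - \rank{\Lambda_{(d)}} N^\Gamma$. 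One then needs $\rank{\Lambda_{(d)}} N^\Gamma = \rank{\Lambda_{(d)}} N_\Gamma$ for the torsion module $N$. This is where the structure theorem and your height argument both enter correctly: $N$ is pseudo-isomorphic to $\bigoplus_j \Lambda_{(d+1)}/(f_j^{e_j})$; the rank Euler characteristic $\rank N^\Gamma - \rank N_\Gamma$ is additive in short exact sequences, vanishes on each $\Lambda_{(d+1)}/(f^e)$ by direct computation (both sides are $0$ if $\gamma-1 \nmid f$, and both equal $1$ if $\gamma-1 \mid f$), and vanishes on pseudo-null modules by exactly your height argument (the annihilator has height $\geq 2$ so cannot lie in $(\gamma-1)$, and its image in $\Lambda_{(d)}$ is a nonzero annihilator of both $N^\Gamma$ and $N_\Gamma$). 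With this in hand the conclusion $\rank{\Lambda_{(d)}}(T_{d+1})_\Gamma = 0$ follows as you intended.
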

\begin{proof}
	Remark that for all $d\geq 1,$ the $\Z_{p}^d$-extension $F^{(d)}$ contains $F_\infty.$ Thus according to Theorem \ref{the lambda rank multiple} we know that $\X{d}{i}$ is of $\Lambda_{(d)}$-rank equal to $n_i.$ The exact sequence (\ref{free quotient exact sequence}) for $L=F^{(d+1)}$ and $L'=F^{(d)}$ writes
	\begin{equation*}
	\xymatrix@=1pc{0 \ar[r] & (\Tor{\Lambda_{(d+1)}}{\X{d+1}{-i}})_{\Gamma} \ar[r]& (\X{d+1}{-i})_{\Gamma} \ar[r] & (\Fr{\Lambda_{(d+1)}}{\X{d+1}{-i}})_{\Gamma} \ar[r]& 0.	}
	\end{equation*}
	Then we have 
	\begin{equation}\label{equality 1}
	\rank{\Lambda_{(d)}} (\X{d+1}{-i})_{\Gamma} =    \rank{\Lambda_{(d)}}(\Fr{\Lambda_{(d+1)}}{\X{d+1}{-i}})_{\Gamma} + \rank{\Lambda_{(d)}}(\Tor{\Lambda_{(d+1)}}{\X{d+1}{-i}})_{\Gamma}\:.
	\end{equation}
	We know that $H_1(\Gamma, \Z_p(i)_{G_S(F^{(d+1)})})$ is $\Lambda_{(d)}$-torsion, hence by Lemma \ref{torsion} we obtain 
	\begin{equation*}
	\rank{\Lambda_{(d)}}(\X{d+1}{-i})_{\Gamma} = \rank{\Lambda_{(d)}}(\X{d}{-i})=n_i=\begin{cases}
	r_2 \qquad \:\:\mbox{ if } i \mbox{ is even}, \\
	r_1+r_2 \:\: \mbox{ if } i \mbox{ is odd.}
	\end{cases}
	\end{equation*}
	Since $\rank{\Lambda_{(d)}}(\Fr{\Lambda_{(d+1)}}{\X{d+1}{-i}})_{\Gamma}= \rank{\Lambda_{(d+1)}} (\X{d+1}{-i})=n_i,$ the equality (\ref{equality 1}) implies that 
	\begin{equation*}
	\rank{\Lambda_{(d)}}(\Tor{\Lambda_{(d+1)}}{\X{d+1}{-i}})_{\Gamma}=0.
	\end{equation*}
\end{proof}
\begin{theo}\label{theorem torsion free}
	Let $F$ be a number field which admits a $\Z_p$-extension $F_\infty$ satisfying $WLC_i.$ Let $d$ be an integer such that $1 \leq d < r=r_2+1+\delta_F$ and $\Tor{\Lambda_{(d)}}{\X{d}{-i}}=0.$ Then $\Tor{\Lambda_{(d+1)}}{\X{d+1}{-i}}=0.$ \\
	In particular, if there exists $d$, $1 \leq d \leq r$ such that  $\Tor{\Lambda_{(d)}}{\X{d}{-i}}=0$, then $\X{k}{-i}$ is a $\Lambda_{(k)}$-torsion-free module for all $d \leq k \leq r.$ 
\end{theo}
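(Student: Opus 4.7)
The plan is to use the two preparatory results just proved: the short exact sequence of Lemma \ref{torsion} (which identifies $(\X{d+1}{-i})_\Gamma$ with a submodule of $\X{d}{-i}$, up to a module that vanishes under Pontryagin considerations), together with Lemma \ref{coinvariante is torsion} (which says that $(\Tor{\Lambda_{(d+1)}}{\X{d+1}{-i}})_\Gamma$ is $\Lambda_{(d)}$-torsion), and combine them with a Nakayama-type argument over the local Iwasawa algebra $\Lambda_{(d+1)}$.

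First, set $\Gamma=\Gal{F^{(d+1)}/F^{(d)}}\simeq\Z_p$ and write $T=\Tor{\Lambda_{(d+1)}}{\X{d+1}{-i}}$. Lemma \ref{torsion} gives an injection $(\X{d+1}{-i})_\Gamma\hookrightarrow \X{d}{-i}$. By hypothesis $\X{d}{-i}$ is $\Lambda_{(d)}$-torsion-free, hence so is the submodule $(\X{d+1}{-i})_\Gamma$. Next, taking $\Gamma$-coinvariants of the short exact sequence
\begin{equation*}
0\longrightarrow T\longrightarrow \X{d+1}{-i}\longrightarrow \Fr{\Lambda_{(d+1)}}{\X{d+1}{-i}}\longrightarrow 0
\end{equation*}
(as done in the proof of Lemma \ref{coinvariante is torsion}) yields a left-exact sequence in which $T_\Gamma$ embeds into $(\X{d+1}{-i})_\Gamma$. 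Since the latter is $\Lambda_{(d)}$-torsion-free while $T_\Gamma$ is $\Lambda_{(d)}$-torsion by Lemma \ref{coinvariante is torsion}, the only possibility is $T_\Gamma=0$.

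The remaining step is to deduce $T=0$ from $T_\Gamma=T/(\gamma-1)T=0$, where $\gamma$ is a topological generator of $\Gamma$. Since $\X{d+1}{-i}$ is finitely generated over the Noetherian ring $\Lambda_{(d+1)}$, so is $T$; and $\gamma-1$ lies in the maximal ideal $\mathfrak{m}$ of the local ring $\Lambda_{(d+1)}$. Hence $T=(\gamma-1)T\subset \mathfrak{m}T$, so $T=\mathfrak{m}T$, and Nakayama's lemma forces $T=0$. I expect this Nakayama step to be routine, and the main conceptual point is really the comparison of the two exact sequences; the only thing to be careful about is that $T_\Gamma$ is genuinely visible inside $(\X{d+1}{-i})_\Gamma$, which is exactly the content of the left-exactness observed in the proof of Lemma \ref{coinvariante is torsion}.

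For the last assertion, one simply iterates: if $\Tor{\Lambda_{(d)}}{\X{d}{-i}}=0$ for some $d$ with $1\le d\le r$, the first part applied successively to the tower $F^{(d)}\subset F^{(d+1)}\subset\cdots\subset F^{(r)}=\tilde F$ yields torsion-freeness of $\X{k}{-i}$ for every $d\le k\le r$. The hypothesis that $F_\infty=F^{(1)}$ satisfies $WLC_i$ is needed precisely to invoke Theorem \ref{the lambda rank multiple} inside the proof of Lemma \ref{coinvariante is torsion}, so it propagates along the entire tower without any further assumption at the intermediate layers.
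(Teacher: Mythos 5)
Your proposal is correct and follows essentially the same route as the paper: Lemma \ref{torsion} to see $(\X{d+1}{-i})_\Gamma$ as a torsion-free submodule of $\X{d}{-i}$, Lemma \ref{coinvariante is torsion} for the torsion property of $T_\Gamma$, the left-exactness of the coinvariant sequence coming from $(\Fr{\Lambda_{(d+1)}}{\X{d+1}{-i}})^\Gamma=0$ (which the paper makes explicit via the snake lemma, whereas you cite it indirectly), and then Nakayama over the local ring $\Lambda_{(d+1)}$.
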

\begin{proof}
	Let $  F_\infty= F^{(1)} \subset \cdots \subset F^{(d)} \cdots \subset F^{(r)}=\tilde{F}$ be a fixed tower of multiple $\Z_p$-extensions associated to the field $F.$ Let $\Gamma=\Gal{F^{(d+1)}/F^{(d)}}.$ The exact sequence 
	\begin{equation*}
	\xymatrix@=2pc{
		0 \ar[r] & \Tor{\Lambda_{(d+1)}}{\X{d+1}{-i}} \ar[r] & \X{d+1}{-i} \ar[r]& \Fr{\Lambda_{(d+1)}}{\X{d+1}{-i}} \ar[r]& 0,
	}
	\end{equation*}
	gives by the snake lemma the following exact sequence
	\begin{equation}\label{torsion sequence}
	\xymatrix@=0.6pc{
		(\Fr{\Lambda_{(d+1)}}{\X{d+1}{-i}})^\Gamma \ar[r] & (\Tor{\Lambda_{(d+1)}}{\X{d+1}{-i}})_\Gamma \ar[r] & (\X{d+1}{-i})_\Gamma \ar[r]& (\Fr{\Lambda_{(d+1)}}{\X{d+1}{-i}})_\Gamma \ar[r]& 0.
	}
	\end{equation}
	The $\Lambda_{(d+1)}$-module $\Fr{\Lambda_{(d+1)}}{\X{d+1}{-i}}$ is without torsion, hence 
	\begin{equation*}
	(\Fr{\Lambda_{(d+1)}}{\X{d+1}{-i}})^\Gamma=0.
	\end{equation*}
	Then the exact sequence (\ref{torsion sequence}) becomes 
	\begin{equation}\label{torsion triviality}
	\xymatrix@=2pc{
		0 \ar[r] & (\Tor{\Lambda_{(d+1)}}{\X{d+1}{-i}})_\Gamma \ar[r] & (\X{d+1}{-i})_\Gamma \ar[r]& (\Fr{\Lambda_{(d+1)}}{\X{d+1}{-i}})_\Gamma \ar[r]& 0.
	}
	\end{equation}
	The exact sequence (\ref{Exacte sequece of lemma}) shows that if $\X{d}{-i}$ is $\Lambda_{(d)}$-torsion free then $(\X{d+1}{-i})_\Gamma$ is also $\Lambda_{(d)}$-torsion free. Furthermore, Lemma \ref{coinvariante is torsion} tells us that the module $(\Tor{\Lambda_{(d+1)}}{\X{d+1}{-i}})_\Gamma$ is $\Lambda_{(d)}$-torsion. Therefore, the exact sequence (\ref{torsion triviality}) implies that $(\Tor{\Lambda_{(d+1)}}{\X{d+1}{-i}})_\Gamma=0$ which gives, by Nakayama's Lemma, that
	\begin{equation*} 
	\Tor{\Lambda_{(d+1)}}{\X{d+1}{-i}}=0.
	\end{equation*}  
\end{proof}
\begin{coro}\label{earlier torsion}
	Let $F$ be a number field which admits a $\Z_p$-extension $F_\infty$ satisfying $WLC_i.$ If $\Tor{\Lambda_{\infty}}{\XS{-i}{F_{\infty}}}$ is trivial, then $\XS{-i}{\tilde{F}}$ is a free torsion $\tilde{\Lambda}$-module.
\end{coro}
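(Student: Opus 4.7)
The plan is to deduce this corollary as a direct consequence of Theorem \ref{theorem torsion free}, by running the inductive ``going up'' along a suitably chosen tower of $\Z_p$-extensions inside $\tilde{F}$.

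First I would fix a tower of $\Z_p$-extensions
\begin{equation*}
F_\infty = F^{(1)} \subset F^{(2)} \subset \cdots \subset F^{(r)} = \tilde{F},
\end{equation*}
with $r = r_2 + 1 + \delta_F$, whose bottom step is precisely the hypothesized $\Z_p$-extension $F_\infty$ satisfying $WLC_i$. Such a tower exists because $\tilde{\Gamma} \simeq \Z_p^{r}$ and any $\Z_p$-extension of $F$ contained in $\tilde{F}$ can be completed into a chain of Galois subextensions with quotients isomorphic to $\Z_p$. This is the tower along which Theorem \ref{theorem torsion free} was formulated, so no additional preparation is required.

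Next, I would translate the hypothesis: $\Tor{\Lambda_{\infty}}{\XS{-i}{F_{\infty}}} = 0$ is exactly the statement $\Tor{\Lambda_{(1)}}{\X{1}{-i}} = 0$ in the notation of the previous section. Thus the ``in particular'' assertion of Theorem \ref{theorem torsion free}, applied with $d = 1$, yields that $\X{k}{-i}$ is $\Lambda_{(k)}$-torsion-free for every $1 \leq k \leq r$. Specializing to $k = r$ gives exactly $\Tor{\tilde{\Lambda}}{\XS{-i}{\tilde{F}}} = 0$, which is the desired conclusion.

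There is essentially no obstacle in the argument beyond invoking the preceding theorem; the only subtlety is that Theorem \ref{theorem torsion free} requires the bottom $\Z_p$-extension to satisfy $WLC_i$, and this is supplied verbatim by the hypothesis of the corollary. Consequently the proof is just a one-line application of the inductive step, iterated $r - 1$ times.
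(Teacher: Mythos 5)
Your proposal is correct and matches the paper's intent exactly: the corollary is a direct application of the ``in particular'' clause of Theorem \ref{theorem torsion free} with $d=1$, using the fixed tower $F_\infty = F^{(1)} \subset \cdots \subset F^{(r)} = \tilde{F}$ already set up before that theorem, and the $WLC_i$ hypothesis is supplied verbatim. (Note the paper's phrase ``free torsion'' should be read as ``torsion-free,'' which is exactly what you conclude.)
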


\section{Greenberg conjecture and torsion triviality}
For any number field $F,$ let $X(F)$ (respectively $X'(F)$) be the Galois group of the maximal abelian unramified (respectively unramified such that all $p$-primes split completely) $p$-extension of $F.$ The group $X(F)$ (resp. $X'(F)$) is isomorphic to the $p$-primary part of the ideal class group $A(F)$ (resp. $(p)$-ideal class group $A'(F)$). Let $L$ be a multiple $\Z_p$-extension of $F.$ We consider the two $\Lambda_L$-modules 
\begin{equation*}
X(L)=\underset{\leftarrow}{\lim} X(E)    \mbox{\: and \:} 	X'(L)=\underset{\leftarrow}{\lim} X'(E),
\end{equation*}
where $E$ runs over finite sub-extensions of $L$ and the inverse limit is taken via the norm maps. It is well known that $X(L)$ (resp. $X'(L)$) is isomorphic to the Galois group of the maximal abelian unramified (respectively unramified such that all $p$-primes split completely) pro-$p$-extension of $L.$ In \cite{GC76} and \cite{GGC}, Greenberg proposed respectively the following conjectures. 
\begin{conj}[GC for short]
	The $\Lambda_c$-module $X(F^c)$ is finite for any totally real number field $F.$
\end{conj}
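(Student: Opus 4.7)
The plan is to specialize the paper's machinery to the totally real setting, and to identify where the gap lies between what this machinery can deliver and a full proof.

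Since $F$ is totally real, $r_2 = 0$, and assuming Leopoldt's conjecture at $p$ one has $\tilde{\Gamma} \simeq \Z_p$, hence $\tilde F = F^c$ and $\tilde \Lambda = \Lambda_c$. Over $\Lambda_c \simeq \Z_p[[T]]$, pseudo-nullity is equivalent to finiteness. The congruence $1 \not\equiv 0 \mod{[F(\mu_p):F]}$ holds since $p$ is odd, and $F^c$ satisfies $WLC_1$ by Remark \ref{Leopoldt tordue}(2); so provided the decomposition condition $(Dec)$ is verified for $F$, the main theorem announced in the abstract (applied with $i=1$) reduces the pseudo-nullity of $X(F^c(\mu_p))^\Delta$ to producing a $\Z_p^d$-extension $L$ of $F$ (necessarily containing $F^c$) with $\Tor{\Lambda_L}{\XS{-1}{L}} = 0$.

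The next step is to bridge between $X(F^c(\mu_p))^\Delta$ and $X(F^c)$ itself. Since $\Delta = \Gal{F^c(\mu_p)/F^c}$ has order dividing $p-1$ and hence coprime to $p$, a restriction--corestriction argument produces a $\Lambda_c$-pseudo-isomorphism between $X(F^c)$ and $X(F^c(\mu_p))^\Delta$, so the two pseudo-nullity statements are equivalent. Under Leopoldt we are forced to take $d = 1$ and $L = F^c$, and the task becomes proving $\Tor{\Lambda_c}{\XS{-1}{F^c}} = 0$. By Kummer duality and a standard Iwasawa-theoretic descent, this torsion triviality is controlled by the $\mu$- and $\lambda$-invariants attached to $X(F^c)$ via the Iwasawa main conjecture for totally real fields. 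At this stage one would also want to invoke Corollary \ref{earlier torsion}, going the opposite direction along the tower, to confirm that checking torsion triviality at $F^c$ is already enough.

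The main obstacle is then the classical one. For $F$ abelian over $\Q$, Ferrero--Washington gives $\mu(F^c) = 0$, and Wiles's proof of the main conjecture identifies the characteristic ideal of the appropriate piece of $X(F^c)$ with a $p$-adic $L$-function; the desired vanishing $\lambda(F^c) = 0$ would then follow from the non-vanishing modulo $p$ of this $L$-value, which is only known in isolated cases (most notably $F = \Q$). For non-abelian totally real base, even $\mu = 0$ is currently open. Thus the plan outlined above converts GC into a statement about $p$-adic $L$-values that, in the generality stated, remains conjectural --- which is precisely why the present paper stops at the torsion-theoretic reformulation rather than claiming a proof of the full conjecture.
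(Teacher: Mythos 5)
The statement you were asked about is, in the paper, labelled exactly as what it is: a \emph{conjecture} (Greenberg's conjecture for totally real fields). The paper does not prove it, does not claim to prove it, and offers no argument for it beyond recalling it as background motivation for the generalized conjecture GGC. So there is no ``paper's own proof'' to compare your attempt against; your final paragraph, which concedes that the argument ``converts GC into a statement about $p$-adic $L$-values that, in the generality stated, remains conjectural,'' is therefore the honest and correct conclusion.

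That said, the route you sketch toward GC through the paper's machinery has a structural obstruction that you do not flag. Theorem \ref{main theorem} (and Theorem \ref{formulation Greenberg conjecture}, on which it rests) requires the decomposition condition $(Dec)$: every $p$-adic place must have decomposition group of $\Z_p$-rank at least $2$ in $\tilde F/F$. But for a totally real $F$ satisfying Leopoldt one has $r_2=0$ and $\delta_F=0$, so $\tilde\Gamma\simeq\Z_p$ has rank $1$; no decomposition subgroup of a rank-one group can have rank $\geq 2$, so $(Dec)$ is vacuously false in precisely the setting of GC. Correspondingly, every positive result the paper actually proves (Theorem \ref{a part of GGC}, Corollary \ref{regualr number field}) is stated for \emph{imaginary} fields or fields containing $\mu_p$. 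You write ``provided the decomposition condition $(Dec)$ is verified for $F$'' as if this were a checkable side-condition, but it can never be verified here, so the reduction to torsion-triviality of $\XS{-1}{F^c}$ via Theorem \ref{formulation Greenberg conjecture} is not available. The subsequent discussion of $\mu$- and $\lambda$-invariants and $p$-adic $L$-values is reasonable as a heuristic for why GC is hard, but it is not entered through the door the paper opens. In short: your bottom-line assessment (no proof exists, in the paper or elsewhere) is right; the intermediate claim that the paper's twisted-torsion formulation applies to the totally real case, modulo an unverified hypothesis, understates the problem --- the hypothesis is structurally excluded.
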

\begin{conj}[GGC for short]
	For any number field $F,$ the $\tilde{\Lambda}$-module $X(\tilde{F})$ is pseudo-null.
\end{conj}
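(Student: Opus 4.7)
The plan is to deduce GGC from the Main Theorem (Theorem \ref{main theorem}) specialized to the Tate twist $i=1$, combined with a Galois descent from $\tilde{F}(\mu_{p})$ down to $\tilde{F}$. Since the Main Theorem requires $i \not\equiv 0 \mod{[F(\mu_p):F]}$, I first assume $\mu_p \not\subset F$; the case $\mu_p \subset F$ is reduced to the previous one by descending to a subfield $F_0 \subset F$ with $[F:F_0]$ prime to $p$, which does not affect $\tilde{F}$.

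The descent uses the standard transfer/restriction argument for the prime-to-$p$ Galois group $\Delta = \Gal{\tilde{F}(\mu_p)/\tilde{F}}$. Writing $K = \tilde{F}(\mu_p)$, the natural map $X(\tilde{F}) \to X(K)^{\Delta}$ has kernel and cokernel annihilated by $|\Delta|$; since both modules are pro-$p$, this yields a $\tilde{\Lambda}$-isomorphism $X(\tilde{F}) \simeq X(K)^{\Delta}$. Because $\Delta$ acts trivially on $\Z_p(0)$, the right-hand side is literally $X(\tilde{F}(\mu_p))(i-1)^{\Delta}$ at $i=1$, so pseudo-nullity of $X(\tilde{F})$ is equivalent to the conclusion of the Main Theorem at the twist $i=1$.

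It therefore remains to verify the two hypotheses of Theorem \ref{main theorem}: condition $(Dec)$ on $F$, which I impose, and the existence of a $\Z_p^d$-extension $L$ containing a $\Z_p$-extension $F_\infty$ with $H^2(G_S(F_\infty), \Q_p/\Z_p(1)) = 0$ and such that $\XS{-1}{L}$ is $\Lambda_L$-torsion-free. The cohomological vanishing is available for free: Remark \ref{Leopoldt tordue}(2) tells me that the cyclotomic $\Z_p$-extension $F^c$ satisfies $WLC_1$, so I take $F_\infty = F^c$. For the torsion condition, the going-up Theorem \ref{theorem torsion free} (and its distillation in Corollary \ref{earlier torsion}) reduces the whole problem to a single input, namely
\[
\Tor{\Lambda_\infty}{\XS{-1}{F^c}} = 0,
\]
at the very first layer; once this holds, torsion-triviality automatically propagates up the tower $F^c \subset F^{(2)} \subset \cdots \subset \tilde{F}$ and the Main Theorem applies with $L = \tilde{F}$ itself.

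The hard part will be establishing this base-case vanishing. There is no general mechanism forcing $\Tor{\Lambda_\infty}{\XS{-1}{F^c}}=0$: it is a genuine arithmetic input, morally a twisted Leopoldt-type regulator statement at the first Tate twist. For restricted classes of fields — the $(p,1)$-regular ones being the prototypical example, where the relevant étale cohomology $H^2(G_S(F),\Z_p(1))$ is tightly controlled — one expects to verify the vanishing directly; that is presumably what underlies the conditional corollary on $(p,1)$-regular fields satisfying $(Dec)$. Without such a regularity hypothesis this single torsion-vanishing appears to be the true unconditional obstruction to proving GGC along this route, and it is where my plan stops short of the full conjecture.
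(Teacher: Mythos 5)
The statement you were asked to prove is Greenberg's \emph{generalized conjecture}, which the paper never claims to prove; it is stated precisely because it is open. There is therefore no ``paper's own proof'' to compare against. Your write-up is honest about this: you correctly reconstruct the paper's conditional strategy (specialize Theorem \ref{main theorem} to $i=1$, identify $X(\tilde{F}(\mu_p))(0)^{\Delta}$ with $X(\tilde{F})$ via the prime-to-$p$ descent, feed in $WLC_1$ for $F^c$, and invoke the going-up Theorem \ref{theorem torsion free} from a base-case torsion-vanishing), and you correctly flag that both the condition $(Dec)$ and the input $\Tor{\Lambda_c}{\XS{-1}{F^c}}=0$ are genuine unproved hypotheses. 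This is exactly the content of Corollary \ref{twist i=1} and Theorem \ref{a part of GGC}/Corollary \ref{regualr number field}: you obtain GGC only for imaginary $(p,1)$-regular fields satisfying $(Dec)$, not in general.

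One step in your plan is actually wrong and worth correcting. To handle $\mu_p\subset F$ you propose to descend to a subfield $F_0$ with $[F:F_0]$ prime to $p$ and claim this ``does not affect $\tilde{F}$.'' It does: $\tilde{F_0}$ is the compositum of all $\Z_p$-extensions of $F_0$, and in general $F\cdot\tilde{F_0}\subsetneq\tilde{F}$ because $\operatorname{rank}_{\Z_p}\Gal{\tilde{F}/F}=r_2(F)+1+\delta_F$ can be strictly larger than $\operatorname{rank}_{\Z_p}\Gal{\tilde{F_0}/F_0}$ (take $F_0=\Q$, $F=\Q(\mu_p)$: $\tilde{\Q}=\Q^c$ has rank $1$, while $\tilde{\Q(\mu_p)}$ has rank $(p-1)/2+1$). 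So $X(\tilde{F})$ and $X(\tilde{F_0})$ live over different Iwasawa algebras and pseudo-nullity of one is not a statement about the other. The correct way the paper avoids $i\equiv 0\bmod{[F(\mu_p):F]}$ is simply to replace the twist $i=1$ by some $i\equiv 1\bmod{[F(\mu_p):F]}$ with $i\geq 2$ (so that $LC_i$ is known by Soul\'e), which is harmless by periodicity of the twist; when $\mu_p\subset F$ this device is unavailable and the paper explicitly leaves the $p$-rational case as an open question. Your conclusion --- that the argument stops short of the conjecture --- is therefore the right one, and the defect in the $\mu_p\subset F$ reduction only reinforces it.
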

We find in the literature a weak version of GGC concerning the unramified $p$-decomposed Iwasawa module (\textit{e.g.} \cite{Nguyen19}).
\begin{conj}[GGC' for short]
	For any number field $F,$ the $\tilde{\Lambda}$-module $X'(\tilde{F})$ is pseudo-null.
\end{conj}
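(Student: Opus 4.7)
The plan is to attack GGC$'$ along the lines this paper has been developing for GGC: trade the unramified-decomposed Iwasawa module for a twisted $S$-ramified one, and propagate its torsion-triviality up a tower of $\Z_p$-extensions. The first step would be to compare $X'(\tilde{F})$ with $X(\tilde{F})$. There is a natural surjection $X(\tilde{F})\twoheadrightarrow X'(\tilde{F})$ whose kernel is generated by the images of the decomposition subgroups at the finitely many $p$-primes of $F$ lifted to $\tilde{F}$. Under a decomposition hypothesis of the type (Dec) these decomposition groups are controlled, their contribution to the kernel is pseudo-null, and GGC$'$ becomes equivalent to GGC up to pseudo-isomorphism.

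Next I would invoke the twisted equivalence announced in the introduction: under (Dec), for any integer $i\not\equiv 0 \mod{[F(\mu_p):F]}$, the $\tilde{\Lambda}$-module $X(\tilde{F}(\mu_p))(i-1)^\Delta$ is pseudo-null if and only if $\XS{-i}{\tilde{F}}$ is $\tilde{\Lambda}$-torsion free. Choosing $i$ with $i\equiv 1 \mod{[F(\mu_p):F]}$ isolates the trivial $\Delta$-character piece, which by $\Delta$-descent (harmless since $\#\Delta$ is prime to $p$) recovers $X(\tilde{F})$ up to pseudo-null error; combined with the first step, the whole question is reduced to whether $\Tor{\tilde{\Lambda}}{\XS{-i}{\tilde{F}}}=0$ for such an $i$.

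At this point Theorem \ref{main theorem} finishes the implication: it suffices to exhibit a $\Z_p^d$-extension $L$ containing a $\Z_p$-extension $F_\infty$ with $H^2(G_S(F_\infty),\Q_p/\Z_p(i))=0$, for which $\Tor{\Lambda_L}{\XS{-i}{L}}=0$. Better still, by the going-up Theorem \ref{theorem torsion free} and Corollary \ref{earlier torsion}, this torsion-triviality can be checked at a well-chosen single $\Z_p$-extension $F_\infty$, rather than at a genuinely multiple one. The canonical source of such base cases is the class of $(p,i)$-regular number fields, exactly the setting that the last section of the paper treats.

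The hard part is of course the base case: the vanishing of $\Tor{\Lambda_\infty}{\XS{-i}{F_\infty}}$ on a fixed $\Z_p$-extension encodes genuinely arithmetic information (in the spirit of Iwasawa's $\mu=0$ phenomenon and of Kolster--Movahhedi's $(p,i)$-regularity) and cannot be extracted from the formalism alone. A secondary obstacle is the decomposition condition (Dec), which can fail when too many $p$-primes split infinitely in $\tilde{F}/F$. Consequently my plan does not yield the full conjecture GGC$'$ unconditionally; what it realistically produces is a sufficient criterion for GGC$'$ parallel to Theorem \ref{main theorem}, together with unconditional statements on the special class of $(p,i)$-regular fields satisfying (Dec).
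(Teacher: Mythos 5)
The statement you were asked to address is labeled as a \emph{conjecture} in the paper (GGC$'$), and the paper itself neither proves it nor claims to: it is stated as the $p$-decomposed weak form of Greenberg's generalized conjecture, whose equivalence with GGC under the hypothesis (Dec) is credited to Nguyen Quang Do. There is therefore no ``paper's own proof'' against which to measure you. You recognize this explicitly, and your concluding paragraph --- that the formalism cannot by itself manufacture the base-case vanishing of $\Tor{\Lambda_\infty}{\XS{-i}{F_\infty}}$, and that (Dec) is a genuine restriction --- is exactly the honest assessment that the situation demands.

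Within that caveat, your account of the paper's strategy for its \emph{partial} results toward GGC$'$ is faithful. You correctly chain together: (i) the equivalence of GGC and GGC$'$ under (Dec); (ii) the twisted formulation of Theorem \ref{formulation Greenberg conjecture}, equating pseudo-nullity of $X(\tilde F(\mu_p))(i-1)^{\Delta}$ with $\Tor{\tilde\Lambda}{\XS{-i}{\tilde F}}=0$ for $i\not\equiv 0 \bmod{[F(\mu_p):F]}$; (iii) the $\Delta$-descent for $i\equiv 1\bmod{[F(\mu_p):F]}$; (iv) the going-up Theorem \ref{theorem torsion free} together with Corollary \ref{earlier torsion}, allowing the torsion-triviality to be checked on a single well-chosen $\Z_p$-extension; and (v) the class of $(p,i)$-regular fields as the source of unconditional base cases, yielding Theorem \ref{a part of GGC} and Corollary \ref{regualr number field}. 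One small point of difference in mechanism: you describe the passage from $X(\tilde F)$ to $X'(\tilde F)$ via the kernel generated by decomposition-subgroup images, which is conceptually correct, but the paper's own quantitative comparison (Proposition \ref{proposition chi composante}) proceeds instead through the four-term exact sequence relating units, $(p)$-units, local terms $\bigoplus_v\Z_p$, and the two class groups at each finite level, passed to a projective limit; the (Dec) hypothesis is then used to show the resulting local summands $\Z_p[[\tilde\Gamma/\tilde\Gamma_v]]$ are pseudo-null. Both viewpoints are legitimate, and yours has the virtue of making the role of the decomposition groups directly visible, but be aware that translating your heuristic into a rigorous pseudo-null control of the kernel is precisely what the paper's limit-of-exact-sequences argument accomplishes.
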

By construction, the $\tilde{\Lambda}$-module $X'(\tilde{F})$ is a quotient of $X(\tilde{F}),$ thus the validity of GGC implies that of GGC'. Moreover, the other implication was considered by Nguyen  Quang Do in \cite{Nguyen19}, precisely he shows that
\begin{equation}\label{etale equivalence}
GGC \mbox{ holds for } F \Longleftrightarrow GGC' \mbox{ holds for } F,
\end{equation} 
when the field $F$ satisfies the following condition: 
\begin{center}\label{condition Dec}
	(Dec) : \textit{All $p$-adic places has decomposition group in the extension $\tilde{F}/F$ of $\Z_p$-rank at least $2$}.
\end{center}
This condition is known to be true for any imaginary Galois extension of $\Q$ and any number field which contains $\mu_p$ (\cite[Theorem 3.2]{Equi GGC}).\\
The next result shows that the equivalence (\ref{etale equivalence}) remains true if we take the $e_i$-components of $X'(\tilde{F}(\mu_p))$ and $X(\tilde{F}(\mu_p))$
\begin{prop}\label{proposition chi composante}
	Let $F$ be a number field which satisfies the condition (Dec). For any integer $i,$ we have a pseudo-isomorphism
	\begin{equation*}
	X(\tilde{F}(\mu_p))(i)^{\Delta} \sim X'(\tilde{F}(\mu_p))(i)^{\Delta},
	\end{equation*}
	where $\Delta$ is the Galois group of the extension $\tilde{F}(\mu_p)/\tilde{F}.$
\end{prop}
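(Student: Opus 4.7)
The plan is to adapt the argument used by Nguyen Quang Do to obtain the equivalence \eqref{etale equivalence} under condition (Dec), but to carry it out on the $(i)$-twisted $\Delta$-invariant components. Set $\tilde{\Lambda}' := \Z_p[[\Gal{\tilde{F}(\mu_p)/F(\mu_p)}]]$. First, I would write down the tautological surjection
\begin{equation*}
\xymatrix@=2pc{0 \ar[r] & \mathcal{K} \ar[r] & X(\tilde{F}(\mu_p)) \ar[r] & X'(\tilde{F}(\mu_p)) \ar[r] & 0,}
\end{equation*}
in which $\mathcal{K}$ is the $\tilde{\Lambda}'$-submodule of $X(\tilde{F}(\mu_p))$ generated by the Frobenius (i.e.\ decomposition) subgroups above the finitely many $p$-adic places of $\tilde{F}(\mu_p)$. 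This is the standard sequence comparing the maximal unramified pro-$p$-extension with its $p$-split quotient.

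Next I would exploit the fact that $\Delta$ has order dividing $p-1$, hence prime to $p$, so the functors $M \mapsto M(i)$ and $M \mapsto M^{\Delta}$ are exact on $\Z_p[\Delta]$-modules. Applying them to the sequence above yields an exact sequence
\begin{equation*}
\xymatrix@=2pc{0 \ar[r] & \mathcal{K}(i)^{\Delta} \ar[r] & X(\tilde{F}(\mu_p))(i)^{\Delta} \ar[r] & X'(\tilde{F}(\mu_p))(i)^{\Delta} \ar[r] & 0,}
\end{equation*}
and the proposition reduces to proving that $\mathcal{K}(i)^{\Delta}$ is pseudo-null as a $\tilde{\Lambda}$-module.

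To establish pseudo-nullity of $\mathcal{K}(i)^{\Delta}$, I would argue as follows. Each $p$-adic place $w$ of $\tilde{F}(\mu_p)$ lies over a $p$-adic place $v$ of $F$, and since $F(\mu_p)/F$ is a finite extension of degree prime to $p$, the decomposition subgroup $D_w \subseteq \Gal{\tilde{F}(\mu_p)/F(\mu_p)}$ has the same $\Z_p$-rank as the decomposition subgroup $D_v$ of $v$ in $\tilde{F}/F$, which is at least $2$ by (Dec). The $w$-component of $\mathcal{K}$ is a cyclic $\tilde{\Lambda}'$-module on which $D_w$ acts trivially, hence a quotient of $\tilde{\Lambda}'/I_{D_w}\tilde{\Lambda}'$, where $I_{D_w}$ is the augmentation ideal of $D_w$. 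Since $\mathrm{rank}_{\Z_p}(D_w)\geq 2$, the ideal $I_{D_w}\tilde{\Lambda}'$ contains a regular sequence of length $2$, so $\tilde{\Lambda}'/I_{D_w}\tilde{\Lambda}'$ is pseudo-null over $\tilde{\Lambda}'$. Since there are only finitely many such $w$, it follows that $\mathcal{K}$ is itself pseudo-null over $\tilde{\Lambda}'$, and the same is true of the twist $\mathcal{K}(i)$.

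The last remaining step, which I expect to be the main technical obstacle, is passing from pseudo-nullity over $\tilde{\Lambda}'$ to pseudo-nullity of the $\Delta$-invariants over $\tilde{\Lambda}$. Here I would use that $\tilde{\Lambda}'$ is free of finite rank $|\Delta|$ over $\tilde{\Lambda}$ (since $|\Delta|$ is prime to $p$), so the two notions of pseudo-nullity correspond under restriction of scalars: a finitely generated $\tilde{\Lambda}'$-module is pseudo-null over $\tilde{\Lambda}'$ if and only if it is pseudo-null as a $\tilde{\Lambda}$-module, and the same then holds for each idempotent eigencomponent. This will give pseudo-nullity of $\mathcal{K}(i)^{\Delta}$ over $\tilde{\Lambda}$ and complete the proof.
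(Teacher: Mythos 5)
Your overall strategy is correct and, in spirit, matches the paper, but you arrive at the key pseudo-null module by a genuinely more direct route. The paper starts from the Belliard--Nguyen Quang Do exact sequence $0\to U_E\to U'_E\to \bigoplus_{v\in S_p(E)}\Z_p\to A(E)\to A'(E)\to 0$, passes to the projective limit over finite subextensions $E$, and then identifies $\varprojlim\bigoplus_{v\in S_p(E)}\Z_p$ with $\bigoplus_{v\in S_p(F)}\Z_p[[\tilde{\Gamma}_F/\tilde{\Gamma}_{F,v}]]$ via profinite (co)induction. Only after twisting by $i$ and taking $\Delta$-invariants is the pseudo-nullity extracted from (Dec). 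You instead go straight to the kernel $\mathcal K$ of $X(\tilde F(\mu_p))\twoheadrightarrow X'(\tilde F(\mu_p))$ and describe it by class field theory as the module generated by the Frobenius elements at the $p$-adic places; this avoids the units sequence altogether, and the pseudo-nullity already holds for $\mathcal K$ itself (not merely for its twisted $\Delta$-invariants), which cleanly gives the proposition by exactness of $M\mapsto M(i)^\Delta$. What the paper's route buys in exchange is that the units/co-induction machinery is reused verbatim in Lemma \ref{pseudo-null}, so the two proofs share infrastructure; your route is more self-contained for this single proposition.

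Two imprecisions are worth flagging, though neither invalidates the argument. First, $\tilde F(\mu_p)$ may have \emph{infinitely many} $p$-adic places (that is exactly what happens when decomposition groups are not of full rank); what is finite is the number of $\Gal{\tilde F(\mu_p)/F(\mu_p)}$-orbits, i.e.\ the number of $p$-adic places of $F(\mu_p)$, and it is for this reason that $\mathcal K$ is finitely generated over the Iwasawa algebra and a quotient of $\bigoplus_{v}\tilde\Lambda'/I_{D_w}\tilde\Lambda'$. Second, with your definition $\tilde\Lambda'=\Z_p[[\Gal{\tilde F(\mu_p)/F(\mu_p)}]]$, the ring $\tilde\Lambda'$ is \emph{isomorphic} to $\tilde\Lambda$ (since $\tilde F$ and $F(\mu_p)$ are linearly disjoint over $F$), not free of rank $|\Delta|$ over it; the ring that is free of rank $|\Delta|$ over $\tilde\Lambda$ is $\Z_p[[\Gal{\tilde F(\mu_p)/F}]]\cong\tilde\Lambda[\Delta]$. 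The fix is harmless: either identify $\tilde\Lambda'\cong\tilde\Lambda$ and observe that $\mathcal K(i)^\Delta$ is a $\tilde\Lambda$-submodule of the pseudo-null module $\mathcal K(i)$, or work over $\tilde\Lambda[\Delta]$ and use the idempotent $e_\Delta=\tfrac{1}{|\Delta|}\sum_{\delta\in\Delta}\delta$, noting $e_\Delta\tilde\Lambda[\Delta]=\tilde\Lambda$.
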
	
\begin{proof}
	Let $E/F$ be a finite subextension  of $\tilde{F}(\mu_p)/F.$ We have an exact sequence (e.g \cite[exact sequence ($3\mu$)]{BN})
	\begin{equation*}
	\xymatrix@=2pc{0 \ar[r]& U_{E} \ar[r]& U'_{E} \ar[r]&  \displaystyle{\bigoplus_{v\in S_p(E)}}  \Z_{p} \ar[r]&  A(E)  \ar[r]&  A'(E)   \ar[r]&  0, }
	\end{equation*}
	where $U_E$ (resp. $U'_E$)  is the group of units (resp. $(p)$-units) of $E$ and $S_p(E)$ is the set of $p$-adic places in $E.$
	Taking the projective limit over all finite subextensions of $\tilde{F}(\mu_p)/F$ with respect to the norm maps we get the exact sequence
	\begin{equation}\label{chi composante}
	\xymatrix@=1pc{0 \ar[r]& \underset{F\subset E \subset \tilde{F}(\mu_p)}{\varprojlim} U_{E} \ar[r]& \underset{F\subset E \subset \tilde{F}(\mu_p)}{\varprojlim} U'_{E} \ar[r]&  \underset{F\subset E \subset \tilde{F}(\mu_p)}{\varprojlim} \displaystyle\bigoplus_{v\in S_p(E)}  \Z_{p} \ar  `[d]   `[d] [d]\\
		&0 &  X'(\tilde{F}(\mu_p))   \ar[l]&  X(\tilde{F}(\mu_p))  \ar[l] }
	\end{equation}
	For any finite subextension $E/F$ of $\tilde{F}(\mu_p)/F$ we denote by $G_{E}$the Galois group $\Gal{E/F}$ and by $G_{E, v}$ the decomposition subgroup of $v$ in the extension $E/F.$ Let $\tilde{\Gamma}_E=\Gal{\tilde{F}(\mu_p)/E}$ and $\tilde{\Gamma}_{E, v}$ the decomposition subgroup at $v$ in the extension $\tilde{F}(\mu_p)/E.$ Observe that $G_{E} = \tilde{\Gamma}_F/\tilde{\Gamma}_{E}$ and,
	\begin{align*}
	G_{E,v} &= \tilde{\Gamma}_{F,v}/\tilde{\Gamma}_{E,v}\\
	&= \tilde{\Gamma}_{F,v}/\tilde{\Gamma}_{F,v} \cap \tilde{\Gamma}_{E}\\
	&= \tilde{\Gamma}_{F,v} \tilde{\Gamma}_{E}/\tilde{\Gamma}_{E}.
	\end{align*} 
	We have 
	\begin{align*}
	\underset{F\subset E \subset \tilde{F}(\mu_p)}{\varprojlim} \bigoplus_{v\in S_p(E)}  \Z_{p}  &= \underset{F\subset E \subset \tilde{F}(\mu_p)}{\varprojlim} \bigoplus_{v\in S_p(F)} \bigoplus_{w \mid v} \Z_{p} \\
	&= \bigoplus_{v\in S_p(F)} \underset{F\subset E \subset \tilde{F}(\mu_p)}{\varprojlim} \mathrm{Ind}^{G_{E}}_{G_{E, v}} \Z_{p}\\
	&= \bigoplus_{v\in S_p(F)} \underset{F\subset E \subset \tilde{F}(\mu_p)}{\varprojlim} \mathrm{Ind}^{\tilde{\Gamma}_F/\tilde{\Gamma}_{E}}_{\tilde{\Gamma}_{F,v} \tilde{\Gamma}_{E}/\tilde{\Gamma}_{E}} \Z_{p}
	\end{align*}
	By \cite[Theorem $6.10.8$]{Zariski} and the definition of the induced module we have  
	\begin{align*}
	\bigoplus_{v\in S_p(F)} \underset{F\subset E \subset \tilde{F}(\mu_p)}{\varprojlim} \mathrm{Ind}^{\tilde{\Gamma}_F/\tilde{\Gamma}_{E}}_{\tilde{\Gamma}_{F,v} \tilde{\Gamma}_{E}/\tilde{\Gamma}_{E}} \Z_{p} &= \bigoplus_{v\in S_p(F)} \mathrm{Ind}^{\tilde{\Gamma}_F}_{\tilde{\Gamma}_{F,v}} \Z_{p}\\
	&= \bigoplus_{v\in S_p(F)} \Z_{p} \hat{\otimes}_{\Z_{p}[[\tilde{\Gamma}_{F,v}]]} \Z_{p}[[\tilde{\Gamma}_F]]
	\end{align*}
	According to \cite[Proposition $5.8.1$]{Zariski} we get 
	\begin{equation*}
		\bigoplus_{v\in S_p(F)} \Z_{p} \hat{\otimes}_{\Z_{p}[[\tilde{\Gamma}_{F,v}]]} \Z_{p}[[\tilde{\Gamma}_F]]
		\simeq \bigoplus_{v\in S_p(F)} \Z_{p}[[\tilde{\Gamma}_F/\tilde{\Gamma}_{F,v}]].
	\end{equation*}
	Hence we conclude that 
	\begin{equation*}
	\underset{F\subset E \subset \tilde{F}(\mu_p)}{\varprojlim} \bigoplus_{v\in S_p(E)}  \Z_{p}
	\simeq \bigoplus_{v\in S_p(F)} \Z_{p}[[\tilde{\Gamma}_F/\tilde{\Gamma}_{F,v}]].
	\end{equation*}
	After twisting $i$ times and taking the $\Delta$-invariants, the exact sequence (\ref{chi composante}) becomes
		\begin{equation}\label{eq seq etale}
	\xymatrix@=1pc{0 \ar[r]& \underset{F\subset E \subset \tilde{F}(\mu_p)}{\varprojlim} U_{E}(i)^\Delta \ar[r]& \underset{F\subset E \subset \tilde{F}(\mu_p)}{\varprojlim} U'_{E}(i)^\Delta \ar[r]&  \bigoplus_{v\in S_p(F)} \Z_{p}[[\tilde{\Gamma}_F/\tilde{\Gamma}_{F,v}]](i)^\Delta \ar  `[d]   `[d] [d]\\ & 0 &  X'(\tilde{F}(\mu_p))(i)^\Delta \ar[l]& X(\tilde{F}(\mu_p))(i)^\Delta  \ar[l] }
	\end{equation}
	Let $\Delta_v$ be the decomoposition subgroup of a fixed place of $\tilde{F}(\mu_p)$ above $v$ in the extension $\tilde{F}(\mu_p)/\tilde{F}.$ Since $\Delta_v$ is a subgroup of $\tilde{\Gamma}_{F,v},$ it acts trivially on $ \Z_{p}[[\tilde{\Gamma}_F/\tilde{\Gamma}_{F,v}]].$ Then for any  $f\in \Z_{p}[[\tilde{\Gamma}_F/\tilde{\Gamma}_{F,v}]](i),$
	\begin{equation*}
		f\in \Z_{p}[[\tilde{\Gamma}_F/\tilde{\Gamma}_{F,v}]](i)^{\Delta_v} \Longleftrightarrow \omega^i(\sigma)f=f \mbox{ for all } \sigma \mbox{ in } \Delta_v,
	\end{equation*}
	 where $\omega$ is the Teichm\"{u}ller character. Thus $\Z_{p}[[\tilde{\Gamma}_F/\tilde{\Gamma}_{F,v}]](i)^{\Delta_v}$ is zero exactly when $\omega^i(\Delta_v) \neq 1.$ We know that 
	\begin{equation*}
	\Z_{p}[[\tilde{\Gamma}_F/\tilde{\Gamma}_{F,v}]](i)^{\Delta}   \subset \Z_{p}[[\tilde{\Gamma}_F/\tilde{\Gamma}_{F,v}]](i)^{\Delta_v}.
	\end{equation*}
	Hence if $\omega^i(\Delta_v) \neq 1,$ then $\Z_{p}[[\tilde{\Gamma}_F/\tilde{\Gamma}_{F,v}]](i)^{\Delta} $ is zero.\\
	Suppose now that $\omega^i(\Delta_v) = 1,$ then
	\begin{equation*}
	\Z_{p}[[\tilde{\Gamma}_F/\tilde{\Gamma}_{F,v}]](i)^{\Delta}   = \Z_{p}[[\tilde{\Gamma}_F/\tilde{\Gamma}_{F,v}]](i)^{\Delta/\Delta_v}.
	\end{equation*}
    Let $\tilde{\Gamma}_v$ be the decomposition group of $v$ in the extension $\tilde{F}/F.$ Notice that we have the following decomposition 
    \begin{equation*}
     \tilde{\Gamma}_F/\tilde{\Gamma}_{F,v} = \Delta/\Delta_v \times \tilde{\Gamma}/\tilde{\Gamma}_v.
     \end{equation*} 
     Then we obtain
	\begin{equation*}
	\Z_{p}[[\tilde{\Gamma}_F/\tilde{\Gamma}_{F,v}]](i)^{\Delta/\Delta_v} \simeq \Z_{p}[[\tilde{\Gamma}/\tilde{\Gamma}_v]].
	\end{equation*}
	Therefore, the pseudo-nullity of the $\tilde{\Lambda}$-module $\Z_{p}[[\tilde{\Gamma}_F/\tilde{\Gamma}_{F,v}]](i)^{\Delta}$ is implied by the condition (Dec), in this case.\\
	Then we conclude that under the condition (Dec), $\Z_{p}[[\tilde{\Gamma}_F/\tilde{\Gamma}_{F,v}]](i)^{\Delta}$ is pseudo-null over $\tilde{\Lambda}.$
	 Therefore, the exact sequence \eqref{eq seq etale} gives the desired pseudo-isomorphism. 
\end{proof}
In \cite{Equi GGC}, the authors show many formulations for Greenberg generalized conjecture (see also \cite[Section 2.2]{Hubbard}). More precisely, Proposition 3.6 in \cite{Equi GGC} establishes, for any number field $F$ satisfying the condition (Dec), the equivalence between the validity of $GGC'$ and the triviality of $\Tor{\tilde{\Lambda}}{\XS{-1}{\tilde{F}(\mu_p)}}^{\Gal{F(\mu_p)/F}}.$ Here we give a twisted version of this equivalence. We start with the following technical lemma.
\begin{lemm}\label{pseudo-null}
	Let $F$ be a number field. Then for any integer $i$ we have an isomorphism
	\begin{equation*}
	\varprojlim \bigoplus_{v\in S(E)} H^2(E_v, \Z_p(i)) \simeq \bigoplus_{\underset{i\equiv 1 \mod{[F_v(\mu_p):F_v]}}{v\in S(F)}} \ind{\Gamma}{\Gamma_v} H^0((\tilde{F})_v, \Q_p/\Z_p(1-i))^\vee,
	\end{equation*}
	where the inverse limit is taken via the corestriction maps, and $(\tilde{F})_v$ is the completion of $\tilde{F}$ at a fixed prime above $v\in S(F).$ In particular, if $F$ satisfies  the condition (Dec), then $ \varprojlim \displaystyle\bigoplus_{v\in S(E)} H^2(E_v, \Z_p(i))$ is pseudo-null.
\end{lemm}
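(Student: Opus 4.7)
My approach is to reduce the assertion to local Tate duality and then carefully pass to the inverse limit over finite subextensions of $\tilde{F}/F$. First, by local Tate duality applied to each $p$-adic place $w$ of a finite subextension $E$ of $\tilde{F}/F$, one has a natural isomorphism
\[
H^2(E_w,\Z_p(i)) \;\cong\; H^0(E_w,\Q_p/\Z_p(1-i))^{\vee},
\]
compatible with corestriction on the source and the Pontryagin dual of restriction on the target. Archimedean places contribute nothing because $p$ is odd. Taking the inverse limit therefore converts the problem to a Pontryagin dual of a direct limit:
\[
\varprojlim_{E}\bigoplus_{w\in S(E)}H^2(E_w,\Z_p(i)) \;\cong\; \left(\varinjlim_{E}\bigoplus_{w\in S(E)} H^0(E_w,\Q_p/\Z_p(1-i))\right)^{\vee}.
\]

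Next, I would group the sum by places $v$ of $F$. The places $\tilde{w}$ of $\tilde{F}$ above $v$ form a $\tilde{\Gamma}$-torsor isomorphic to $\tilde{\Gamma}/\tilde{\Gamma}_v$, and passing to the colimit gives
\[
\varinjlim_{E}\bigoplus_{w\mid v,\,w\in S(E)} H^0(E_w,\Q_p/\Z_p(1-i)) \;\cong\; \ind{\tilde{\Gamma}}{\tilde{\Gamma}_v} H^0((\tilde{F})_v,\Q_p/\Z_p(1-i)).
\]
Since $(\tilde{F})_v$ contains the cyclotomic $\Z_p$-extension of $F_v$, the Galois action of $G_{(\tilde{F})_v}$ on $\Q_p/\Z_p(1-i)$ is the restriction of $\chi_{\mathrm{cyc}}^{1-i}$ to a pro-$p$ group, and one checks that $H^0((\tilde{F})_v,\Q_p/\Z_p(1-i))$ is nonzero precisely when $i\equiv 1 \mod{[F_v(\mu_p):F_v]}$. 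Dualizing, together with the identity $\bigl(\ind{\tilde{\Gamma}}{\tilde{\Gamma}_v}M\bigr)^{\vee} \cong \ind{\tilde{\Gamma}}{\tilde{\Gamma}_v}(M^{\vee})$ (understood via the compact convention $\tilde{\Lambda}\,\hat{\otimes}_{\Z_p[[\tilde{\Gamma}_v]]}(-)$ on the right), produces the claimed isomorphism after summing over $v\in S(F)$.

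For the pseudo-nullity statement under (Dec), I would argue as follows: the action of $\tilde{\Gamma}_v$ on $N_v:=H^0((\tilde{F})_v,\Q_p/\Z_p(1-i))^{\vee}$ factors through a finite quotient (a power of the Teichm\"uller character inherited from $\chi_{\mathrm{cyc}}^{1-i}$), so elements of the form $\gamma-\chi(\gamma)$ for $\gamma\in \tilde{\Gamma}_v$ annihilate $\ind{\tilde{\Gamma}}{\tilde{\Gamma}_v}N_v$. These generate an ideal of $\tilde{\Lambda}$ whose height equals $\rank{\Z_p}\tilde{\Gamma}_v$, which is at least $2$ under (Dec); thus each summand is pseudo-null, and the finite direct sum is pseudo-null as well.

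The main obstacle I expect is the careful bookkeeping of local Tate duality in compatibility with both corestriction at each finite level and with passage to the inverse/direct limit when $\tilde{\Gamma}/\tilde{\Gamma}_v$ is infinite. One must match the ``compact'' and ``discrete'' conventions for induction, so that the direct sum $\bigoplus_{\tilde{w}\mid v}$ appearing on the $H^2$-side corresponds to the compact induction of $N_v$ rather than its coinduction (product); the continuity of the various modules involved is what makes this matching work.
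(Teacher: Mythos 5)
Your proof follows essentially the same route as the paper: local Tate duality $H^2(E_w,\Z_p(i))\cong H^0(E_w,\Q_p/\Z_p(1-i))^\vee$ compatible with corestriction and restriction, converting the inverse limit into the Pontryagin dual of a colimit, grouping places over each $v\in S(F)$ to obtain the compact induced module $\ind{\tilde\Gamma}{\tilde\Gamma_v}H^0((\tilde F)_v,\Q_p/\Z_p(1-i))^\vee$, reading off the vanishing condition $i\equiv 1\pmod{[F_v(\mu_p):F_v]}$ from the local $H^0$, and concluding pseudo-nullity from (Dec). Your annihilator-ideal argument for pseudo-nullity is just a reformulation of the paper's identification of each summand with $\Z_p[[\tilde\Gamma/\tilde\Gamma_v]](i-1)$.

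One claim is, however, incorrect and worth fixing even though it does not damage the conclusion: the action of $\tilde\Gamma_v$ on $N_v$ does not in general factor through a finite quotient, and it is not a power of the Teichm\"uller character. Since $\tilde\Gamma_v$ is a torsion-free pro-$p$ group and $(\tilde F)_v$ contains the local cyclotomic $\Z_p$-extension, $\tilde\Gamma_v$ acts on $\Z_p(i-1)$ through the restriction of $\langle\chi_{\mathrm{cyc}}\rangle^{i-1}$, which takes values in $1+p\Z_p$ and has infinite image whenever $i\neq 1$ and the cyclotomic line survives in $\tilde\Gamma_v$; the Teichm\"uller component is simply invisible on a pro-$p$ group. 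Fortunately the finiteness is not used: the identity $(\gamma-\chi(\gamma))(1\otimes m)=0$ in $\tilde\Lambda\,\hat\otimes_{\Z_p[[\tilde\Gamma_v]]}N_v$ holds for any continuous $\Z_p^\times$-valued character $\chi$ of $\tilde\Gamma_v$, and the height count $\mathrm{ht}\bigl((\gamma-\chi(\gamma))_{\gamma\in\tilde\Gamma_v}\bigr)=\rank{\Z_p}\tilde\Gamma_v\geq 2$ under (Dec) remains valid. You should also say ``$\tilde\Gamma$-set isomorphic to $\tilde\Gamma/\tilde\Gamma_v$'' rather than ``torsor,'' since the stabilizer $\tilde\Gamma_v$ is nontrivial.
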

\begin{proof}
	For any finite subextension $E/F$ of $\tilde{F}/F,$ let $G=\Gal{E/F}$ and $G_{v}$ be the decomposition subgroup of $v\in S$ in the extension $E/F.$ We have 
	\begin{align*}
	\varprojlim \bigoplus_{v\in S(E)} H^2(E_v, \Z_p(i)) 
	&= \varprojlim \bigoplus_{v\in S(F)} \bigoplus_{w\mid v} H^2(E_w, \Z_p(i)) \\
	&= \bigoplus_{v\in S(F)} \varprojlim \ind{G}{G_v} H^2(E_v, \Z_p(i)). \\ 
	\end{align*}
	By local duality we have 
	\begin{equation*}
           H^2(E_v, \Z_p(i)) \simeq H^0(E_v, \Q_p/\Z_p(1-i))^\vee.
	\end{equation*}
	Therefore 
	\begin{equation}\label{sum equality}
		\varprojlim \bigoplus_{v\in S(E)} H^2(E_v, \Z_p(i))  \simeq \bigoplus_{v\in S(F)} \varprojlim \ind{G}{G_v} (H^0(E_v, \Q_p/\Z_p(1-i))^\vee).
	\end{equation}
	Recall that 
	\begin{equation*}
	\ind{G}{G_v} (H^0(E_v, \Q_p/\Z_p(1-i))^\vee) = (\coind{G}{G_v} H^0(E_v, \Q_p/\Z_p(1-i)))^\vee. 
	\end{equation*}
	Hence the isomorphism $(\ref{sum equality})$ becomes
	\begin{equation*}
	\varprojlim \bigoplus_{v\in S(E)} H^2(E_v, \Z_p(i)) \simeq \bigoplus_{v\in S(F)} (\varinjlim \coind{G}{G_v} H^0(E_v, \Q_p/\Z_p(1-i)))^\vee 
	\end{equation*}
	Let's denote $\tilde{\Gamma}=\Gal{\tilde{F}/F},$ $\tilde{\Gamma}_E=\Gal{\tilde{F}/E},$ $\tilde{\Gamma}_v=\Gal{(\tilde{F})_v/F_v}$ and $\tilde{\Gamma}_{v,E}=\Gal{(\tilde{F})_v/E_v}.$ Observe that 
	\begin{equation*}
	G=\tilde{\Gamma}/\tilde{\Gamma}_E  \:\mbox{  and   }\: G_v=\tilde{\Gamma}_v/\tilde{\Gamma}_{v,E}=\tilde{\Gamma}_E\tilde{\Gamma}_v/\tilde{\Gamma}_E,
	\end{equation*}
	and
	\begin{equation*}
	H^0(E_v, \Q_p/\Z_p(1-i)) = H^0((\tilde{F})_v, \Q_p/\Z_p(1-i))^{\tilde{\Gamma}_{v,E}}.
	\end{equation*}
	Using  \cite[Proposition 6.10.4]{Zariski}, we conclude that 
	\begin{align*}
	\varprojlim \bigoplus_{v\in S_p(E)} H^2(E_v, \Z_p(i))
	&= (\bigoplus_{v\in S_p(F)} \coind{\tilde{\Gamma}}{\tilde{\Gamma}_v} H^0((\tilde{F})_v, \Q_p/\Z_p(1-i)))^\vee \nonumber \\
	&= \bigoplus_{v\in S(F)} \ind{\tilde{\Gamma}}{\tilde{\Gamma}_v} (H^0((\tilde{F})_v, \Q_p/\Z_p(1-i))^\vee).
	\end{align*}
	By the definition of the induced module we obtain that 
	\begin{equation*}
		\varprojlim \bigoplus_{v\in S(E)} H^2(E_v, \Z_p(i)) = \bigoplus_{v\in S(F)}
		 H^0((\tilde{F})_v, \Q_p/\Z_p(1-i))^\vee \hat{\otimes}_{\Z_p[[\tilde{\Gamma}_v]]} \tilde{\Lambda}.
	\end{equation*}
	Let $v\in S(F),$ the field $(\tilde{F})_v$ contains the cyclotomic $\Z_p$-extension of $F_v.$ For such a $v$ there are two possible cases
	\begin{enumerate}
		\item  $i \equiv 1 \mod{[F_v(\mu_p):F_v]}.$ In this case the absolute Galois group $G_{(\tilde{F})_v}$ acts trivially on $\Q_p/\Z_p(1-i).$ Hence 
		\begin{equation*}
		H^0((\tilde{F})_v, \Q_p/\Z_p(1-i))= \Q_p/\Z_p(1-i).
		\end{equation*}
		\item  $i \not\equiv 1 \mod{[F_v(\mu_p):F_v]}$). In this case the action of $G_{(\tilde{F})_v}$ on  $\Q_p/\Z_p(1-i)$ is not trivial and the cohomological group $H^0((\tilde{F})_v, \Q_p/\Z_p(1-i))$ is zero.
	\end{enumerate}
	Thus we conclude that
	\begin{equation*}
	\varprojlim \bigoplus_{v\in S(E)} H^2(E_v, \Z_p(i)) = \bigoplus_{\overset{v\in S(F)}{i \equiv 1 \mod{[F_v(\mu_p):F_v]}}}
	\Z_p(i-1) \hat{\otimes}_{\Z_p[[\tilde{\Gamma}_v]]} \tilde{\Lambda}.
	\end{equation*}
	Hence, using \cite[Proposition $5.8.1$]{Zariski}, we obtain
		\begin{equation*}
	\varprojlim \bigoplus_{v\in S(E)} H^2(E_v, \Z_p(i)) \simeq \bigoplus_{\overset{v\in S(F)}{i \equiv 1 \mod{[F_v(\mu_p):F_v]}}}
	 \Z_p[[\tilde{\Gamma}/\tilde{\Gamma}_v]](i-1).
	\end{equation*}
	Then if $F$ satisfies the condition (Dec), the $\tilde{\Lambda}$-module $\varprojlim \bigoplus_{v\in S(E)} H^2(E_v, \Z_p(i))$ is pseudo-null.
\end{proof}
The proof of Lemma \ref{pseudo-null} leads to the following remark.
\begin{rema}\label{without Dec}
Let $F$ be a number field, and let $i$ be an integer. If for each place $v\in S_p$ we have $i \not\equiv 1 \mod{[F_v(\mu_p):F_v]}$, then  
\begin{equation*}
\varprojlim \bigoplus_{v\in S(E)} H^2(E_v, \Z_p(i)) = 0.
\end{equation*}
\end{rema}
\noindent Now we can generalize Proposition 3.6 in \cite{Equi GGC} to any twist $i\not \equiv 0 \mod{[F(\mu_p): F]}.$
\begin{theo}\label{formulation Greenberg conjecture}
	Let $i$ be an integer such that $i\not\equiv 0 \mod{[F(\mu_p):F]}$ . Let $F$ be a number field for which the condition $(Dec)$ holds, the following assertions are equivalent:
	\begin{enumerate}
		\item $\Tor{\tilde{\Lambda}}{\XS{-i}{\tilde{F}}} = 0.$
		\item $X'(\tilde{F}(\mu_p))(i-1)^\Delta$ is pseudo-null.
		\item $X(\tilde{F}(\mu_p))(i-1)^\Delta$ is pseudo-null.
	\end{enumerate}
\end{theo}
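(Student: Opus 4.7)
The plan is to establish the chain $(1)\Leftrightarrow(2)\Leftrightarrow(3)$ in two steps. The equivalence $(2)\Leftrightarrow(3)$ is immediate from Proposition~\ref{proposition chi composante} applied with twist $i-1$: under (Dec) the pseudo-isomorphism it supplies preserves pseudo-nullity. The real content is $(1)\Leftrightarrow(2)$, and I approach it by combining a Kummer-theoretic identification, a $\Delta$-descent, and a Poitou--Tate analysis controlled by Lemma~\ref{pseudo-null}.

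Set $L:=\tilde F(\mu_p)$. Because $L\supset F^c(\mu_p)\supset \mu_{p^\infty}$, the absolute Galois group $G_S(L)$ acts trivially on every Tate twist. Kummer theory applied to $1\to\mu_{p^n}\to\mathbb{G}_m\xrightarrow{p^n}\mathbb{G}_m\to 1$, passed to the Iwasawa limit over finite subextensions of $L/F$, yields a canonical pseudo-isomorphism
\begin{equation*}
X'(L)\sim H^2(G_S(L),\Z_p(1))
\end{equation*}
of $\Z_p[[\Gal{L/F}]]$-modules (the defect is a Brauer-limit term which under (Dec) is pseudo-null by the same local-decomposition argument as in Lemma~\ref{pseudo-null}). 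Twisting by $i-1$ and taking $\Delta$-invariants---exact since $|\Delta|$ divides $[F(\mu_p):F]$ and is therefore prime to $p$---combined with Hochschild--Serre applied to $1\to G_S(L)\to G_S(\tilde F)\to\Delta\to 1$, gives
\begin{equation*}
X'(L)(i-1)^\Delta\;\sim\; H^2(G_S(L),\Z_p(i))^\Delta\;\simeq\; H^2(G_S(\tilde F),\Z_p(i)).
\end{equation*}
Thus $(2)$ is equivalent to the pseudo-nullity of $H^2(G_S(\tilde F),\Z_p(i))$ as a $\tilde\Lambda$-module.

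The final step links $H^2(G_S(\tilde F),\Z_p(i))$ to $\Tor{\tilde\Lambda}{\XS{-i}{\tilde F}}$. The Iwasawa-theoretic Poitou--Tate sequence for $\Z_p(i)$ reads
\begin{equation*}
0\to \cker{\tilde F}\to H^2(G_S(\tilde F),\Z_p(i))\to \varprojlim \bigoplus_{v\in S(E)} H^2(E_v,\Z_p(i))\to H^0(G_S(\tilde F),\Q_p/\Z_p(1-i))^\vee\to 0.
\end{equation*}
Under (Dec), Lemma~\ref{pseudo-null} makes the middle local term pseudo-null, and the rightmost term is at worst a Tate twist of $\Z_p$, which is pseudo-null over $\tilde\Lambda$ because $\tilde\Gamma$ has $\Z_p$-rank $\geq 2$ under (Dec). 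Consequently $H^2(G_S(\tilde F),\Z_p(i))\sim\cker{\tilde F}$. The remaining link $\cker{\tilde F}\sim\Tor{\tilde\Lambda}{\XS{-i}{\tilde F}}$ is the technical heart and the main obstacle: one exploits the projective dimension bound $\leq 1$ from Theorem~\ref{multiple projective dimension} together with the absence of non-trivial pseudo-null submodules from Proposition~\ref{pseudo null sub-modules}, which together force the $\tilde\Lambda$-torsion of $\XS{-i}{\tilde F}$ to be captured pseudo-isomorphically by the global $H^2$ on the cohomology side of the Poitou--Tate pairing. The hypothesis $i\not\equiv 0\bmod [F(\mu_p):F]$ is indispensable throughout, both to invoke Theorem~\ref{multiple projective dimension} and to control the finite error terms appearing in the duality bookkeeping. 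Chaining the three pseudo-isomorphisms then closes $(1)\Leftrightarrow(2)$.
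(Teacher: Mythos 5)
Your overall skeleton matches the paper's: reduce $(2)\Leftrightarrow(3)$ to Proposition~\ref{proposition chi composante}, run the Poitou--Tate sequence over the tower, use Lemma~\ref{pseudo-null} under (Dec) to discard the local term, and connect the remaining global piece to $\Tor{\tilde{\Lambda}}{\XS{-i}{\tilde{F}}}$. However, at exactly the step you yourself flag as ``the technical heart and the main obstacle'' you stop arguing and only describe what \emph{should} happen. Saying that projective dimension $\leq 1$ and the absence of pseudo-null submodules ``force the torsion to be captured pseudo-isomorphically by the global $H^2$'' is an expectation, not a proof; nothing in what you wrote actually produces a map between $\cker{\tilde{F}}$ (equivalently $\varprojlim_E H^2(G_S(E),\Z_p(i))$) and $\Tor{\tilde{\Lambda}}{\XS{-i}{\tilde{F}}}$, let alone shows it has pseudo-null kernel and cokernel. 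That is a genuine gap, not a stylistic shortcut.

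The missing tool is Jannsen's spectral sequence for Iwasawa adjoints. Concretely, since $i\not\equiv 0\bmod[F(\mu_p):F]$ gives $H^0(G_S(\tilde{F}),\Q_p/\Z_p(i))=0$, Corollary~3 of Jannsen yields an exact sequence
\begin{equation*}
0 \to \mathrm{E}^1\bigl(\XS{-i}{\tilde F}\bigr) \to \varprojlim_E H^2(G_S(E),\Z_p(i)) \to \bigl(H^2(G_S(\tilde F),\Q_p/\Z_p(i))^\vee\bigr)^{+} \to \mathrm{E}^2\bigl(\XS{-i}{\tilde F}\bigr)\to 0,
\end{equation*}
where $\mathrm{E}^k(-)=\mathrm{Ext}^k_{\tilde\Lambda}(-,\tilde\Lambda)$. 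After reducing to a representative $i\geq 2$ in the class mod $[F(\mu_p):F]$ (so that $LC_i$ and hence weak Leopoldt hold — a reduction you also omit, and which is needed both here and to invoke Theorem~\ref{multiple projective dimension}), the right-hand terms vanish and one gets $\mathrm{E}^1(\XS{-i}{\tilde F})\simeq\varprojlim_E H^2$. Applying $\mathrm{E}^1$ again, the left side $\mathrm{E}^1\mathrm{E}^1(\XS{-i}{\tilde F})$ is identified with $\Tor{\tilde\Lambda}{\XS{-i}{\tilde F}}$ because $\mathrm{pd}_{\tilde\Lambda}\XS{-i}{\tilde F}\leq 1$ (Jannsen's Theorem~1.6 and Lemma~1.8), while the right side $\mathrm{E}^1(\varprojlim_E H^2)$ is pseudo-isomorphic to $\varprojlim_E H^2$ because that module is $\tilde\Lambda$-torsion (Perrin-Riou). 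Chaining then gives $\Tor{\tilde\Lambda}{\XS{-i}{\tilde F}}\sim X'(\tilde F(\mu_p))(i-1)^\Delta$, and the absence of nontrivial pseudo-null submodules upgrades pseudo-nullity of the torsion to its vanishing. Without this Ext-adjoint mechanism your argument does not close.

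A secondary remark: your Kummer-theoretic reformulation $X'(L)\sim H^2(G_S(L),\Z_p(1))$ is loosely stated. What you need throughout is the Iwasawa cohomology $\varprojlim_E H^2(G_S(E),\Z_p(\cdot))$, not the continuous cohomology of $G_S(L)$; the two do not coincide in general, and the Jannsen machinery only talks about the former. The paper sidesteps this by identifying $\varprojlim_E\cker{E}$ directly with $X'(\tilde F(\mu_p))(i-1)^\Delta$, which is both cleaner and what the subsequent argument actually uses.
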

\begin{proof}
	The equivalence between $(2)$ and $(3)$ is a consequence of Proposition \ref{proposition chi composante}, so we prove here only the equivalence between $(1)$ and $(2).$\\
    We start by remarking that if $i\equiv j \mod{[F(\mu_p):F]}$ we have 
    \begin{equation*}
    \Tor{\tilde{\Lambda}}{\XS{-i}{\tilde{F}}}=\Tor{\tilde{\Lambda}}{\XS{-j}{\tilde{F}}}(j-i), 
    \end{equation*} 
    and
    \begin{equation*}
    	X(\tilde{F}(\mu_p))(i-1)^\Delta=X(\tilde{F}(\mu_p))(j-1)^\Delta (i-j).
    \end{equation*}
    Then we can suppose that $i\geq 2$ to be sure that $LC_i$ is satisfied.\\
    Let $E/F$ be a finite subextension of $\tilde{F}/F,$ the Poitou-Tate exact sequence writes 
	\begin{equation*}
	\xymatrix@=1pc{
		0 \ar[r] & \cker{E} \ar[r] & H^2(G_S(E),\Z_p(i)) \ar[r]^{loc\qquad}&  \displaystyle\bigoplus_{v\in S(E)} H^2(E_v,\Z_p(i)) \quad\qquad 	\ar@{-<}  `[d]   `[d] [d] \\
		 & & 0 &  H^0(G_S(E), \Q_p/\Z_p(1-i))^\vee  \ar[l]
	}
	\end{equation*}
	where $\cker{E}$ is the kernel of the localization map $loc.$
	Taking the inverse limit over all finite subextensions of $\tilde{F}/F$ with respect to the corestriction maps, we obtain the following exact sequence 
	\begin{equation*}
	\xymatrix@=1pc{
		0 \ar[r] & \displaystyle\varprojlim_{E} \cker{E} \ar[r] & \qquad \displaystyle\varprojlim_E H^2(G_S(E),\Z_p(i))  \qquad 	\ar  `[d]   `[d] [d]\\
		  0 & \displaystyle\varprojlim_E H^0(G_S(E), \Q_p/\Z_p(1-i))^\vee \ar[l] &  \displaystyle\varprojlim_E \bigoplus_{v\in S(E)} H^2(E_v,\Z_p(i))  \ar[l]
	}
	\end{equation*}
	Lemma \ref{pseudo-null} shows that the $\tilde{\Lambda}$-module $\displaystyle\varprojlim_E \bigoplus_{v\in S(E)} H^2(E_v,\Z_p(i))$ is pseudo-null, hence we have a pseudo isomorphism
	\begin{equation}\label{pseudo isomorphism}
	\varprojlim_E \cker{E} \sim \varprojlim_E H^2(G_S(E),\Z_p(i)).
	\end{equation}
	Furthermore, it is well known (practically by definition) that 
	\begin{equation*}
	\varprojlim_E \cker{E} \simeq X'(\tilde{F}(\mu_p))(i-1)^\Delta.
	\end{equation*}
	Then the pseudo isomorphism (\ref{pseudo isomorphism}) becomes 
	\begin{equation}\label{pseudo isomorphism class group}
	X'(\tilde{F}(\mu_p))(i-1)^\Delta \sim \varprojlim_E H^2(G_S(E),\Z_p(i)).
	\end{equation}
	For any $\tilde{\Lambda}$-module $M$ and any positive integer $k,$ let (\cite[Definition 1.4]{Jannsen2})
	\begin{equation*}
		\mathrm{E}^{k}(M):=\mathrm{Ext}_{\tilde{\Lambda}}^{k}(M, \tilde{\Lambda}).
	\end{equation*}
	In particular, $\mathrm{E}^1(M)$ is the adjoint of $M$ in Iwasawa theory and $\mathrm{E}^0(M):= M^+=\mathrm{Hom}(M, \tilde{\Lambda})$ (\cite[page 16]{Perin Riou}, \cite[Lemma $3.1$]{Jannsen2}).\\
	The condition $i\not\equiv 0 \mod{[F(\mu_p):F]}$ implies that the action of $G_S(\tilde{F})$ on $\Q_p/\Z_p(i)$ is not trivial so that
	\begin{equation*}
	H^0(G_S(\tilde{F}), \Q_p/\Z_p(i)) = 0.
	\end{equation*}
	Then the exact sequence $b)$ of \cite[Corollary 3]{Jannsen} for the discrete $G_S(\tilde{F})$-module $\Q_p/\Z_{p}(i)$ writes
	\begin{equation}\label{Jannsen exact sequence}
	\xymatrix@=1pc{0 \ar[r]&  \mathrm{E}^1(H^1(G_S(\tilde{F}),\Q_p/\Z_{p}(i))^\vee) \ar[r]& \varprojlim_E H^2(G_S(E),\Z_p(i)) \quad \: \ar@{<}  `[d]   `[d] [d]\\
	0  & \mathrm{E}^2(H^1(G_S(\tilde{F}),\Q_p/\Z_{p}(i))^\vee) \ar[l] &(H^2(G_S(\tilde{F}), \Q_p/\Z_p(i))^\vee)^+ 	\ar[l]}
	\end{equation}
	By the twisted weak Leopoldt conjecture ($i\geq 2$) we have 
	\begin{equation*}
	H^2(G_S(\tilde{F}), \Q_p/\Z_p(i)) = 0.
	\end{equation*}
	Thus the exact sequence (\ref{Jannsen exact sequence}) gives the following isomorphism
	\begin{equation*}
	\mathrm{E}^1(\XS{-i}{\tilde{F}}) \simeq \varprojlim_E H^2(G_S(E),\Z_p(i)).
	\end{equation*} 
	This leads to an isomorphism 
	\begin{equation*}
	\mathrm{E}^1(\mathrm{E}^1(\XS{-i}{\tilde{F}})) \simeq \mathrm{E}^1(\varprojlim_E H^2(G_S(E),\Z_p(i))).
	\end{equation*}
	On the one hand, by (\ref{pseudo isomorphism class group}) $\varprojlim_E H^2(G_S(E),\Z_p(i))$ is a torsion $\tilde{\Lambda}$-module, so using $(iii)$ in \cite[Proposition $8,$ Chap. I]{Perin Riou} we have
	\begin{equation}\label{pseudo isomorphism limite proj}
	\mathrm{E}^1(\varprojlim_E H^2(G_S(E),\Z_p(i))) \sim \varprojlim_E H^2(G_S(E),\Z_p(i)).
	\end{equation}
	On the other hand, Theorem \ref{multiple projective dimension} shows that $\XS{-i}{\tilde{F}}$ is a $\tilde{\Lambda}$-module of projective dimension at most $1.$ Then Theorem $1.6$ and Lemma $1.8$ of \cite{Jannsen2} show that $\mathrm{E}^1(\mathrm{E}^1(\XS{-i}{\tilde{F}}))$ is canonically isomorphic to the kernel of the natural map
	\begin{equation*}
		\xymatrix@=2pc{ \XS{-i}{\tilde{F}} \ar[r]& \XS{-i}{\tilde{F}}^{++},}
	\end{equation*}
	which is obviously $\Tor{\tilde{\Lambda}}{\XS{-i}{\tilde{F}}}.$ Hence we have
	\begin{equation*}
	\mathrm{E}^1(\mathrm{E}^1(\XS{-i}{\tilde{F}})) \simeq \Tor{\tilde{\Lambda}}{\XS{-i}{\tilde{F}}}.
	\end{equation*}
	Therefore, the pseudo isomorphims (\ref{pseudo isomorphism class group}) and (\ref{pseudo isomorphism limite proj}) give a pseudo isomorphism
	\begin{equation}\label{last ps-iso}
	\Tor{\tilde{\Lambda}}{\XS{-i}{\tilde{F}}} \sim X'(\tilde{F}(\mu_p))(i-1)^\Delta.
	\end{equation}
	Since $\XS{-i}{\tilde{F}}$ has no non trivial pseudo-null sub-module, the pseudo-nullity of $\Tor{\tilde{\Lambda}}{\XS{-i}{\tilde{F}}}$ is equivalent to its triviality.	Hence we get the equivalence between $(1)$ and $(2)$ by the pseudo-isomorphism \eqref{last ps-iso}.
\end{proof}
\begin{rema}
	\begin{enumerate}
		\item The hypothesis $i\not \equiv 0 \mod{[F(\mu_p):F]}$ ensures that the cohomological group $H^0(G_S(\tilde{F}), \Q_p/\Z_p(i))$ is zero, which is needed to apply Corollary $3$ in \cite{Jannsen}.
		\item If $F$ contains $\mu_p,$ the equivalence between $(1)$ and $(2)$ is given in \cite[ Proposition $3.6$]{Equi GGC} and for the other one (i.e $(2) \Leftrightarrow (3)$) see \cite[page $3$]{Nguyen19}.
	\end{enumerate}
\end{rema}
Recall that Theorem \ref{theorem torsion free} tells us that if $F$ have a $\Z_{p}^d$-extension $L$ for which $\Tor{\Lambda_L}{\XS{-i}{L}}=0$ and contains a $\Z_{p}$-extension satisfying $WLC_i,$ then $\Tor{\tilde{\Lambda}}{\XS{-i}{\tilde{F}}}=0.$ As a consequence of Theorem \ref{formulation Greenberg conjecture} the $\tilde{\Lambda}$-module $X(\tilde{F}(\mu_p))(i-1)^\Delta$ is pseudo-null if $F$ verifies the condition $(Dec)$ and $i\not\equiv 0 \mod{[F(\mu_p):F]}.$ These prove the following result.
\begin{theo}\label{main theorem}
	Assume that the condition $(Dec)$ holds for $F.$ Let $i$ be an integer such that $i\not\equiv 0 \mod{[F(\mu_p):F]}.$ If $F$ admits  a $\Z_{p}^d$-extension $L$ such that $\Tor{\Lambda_L}{\XS{-i}{L}}=0$ and $L$ contains a $\Z_{p}$-extension which satisfies $WLC_i,$ then the $\tilde{\Lambda}$-module $X(\tilde{F}(\mu_p))(i-1)^\Delta$ is pseudo-null. \hfill $\square$
\end{theo}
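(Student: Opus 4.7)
The plan is to assemble the result directly from Theorem \ref{theorem torsion free} and Theorem \ref{formulation Greenberg conjecture}; no new technical ingredient is required beyond a careful arrangement of the tower of multiple $\Z_p$-extensions of $F$. The hypothesis on $L$ furnishes the initial data for a going-up argument, and the hypotheses $(Dec)$ and $i\not\equiv 0 \mod{[F(\mu_p):F]}$ are exactly those needed to translate a statement about the twisted $S$-ramified Iwasawa module of $\tilde{F}$ into one about the $(i-1)$-eigenpart of $X(\tilde{F}(\mu_p))$.

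First I would fix a tower of $\Z_p$-extensions
\begin{equation*}
F_\infty = F^{(1)} \subset F^{(2)} \subset \cdots \subset F^{(d)} = L \subset \cdots \subset F^{(r)} = \tilde{F},
\end{equation*}
where $F_\infty$ is the $\Z_p$-extension of $F$ contained in $L$ satisfying $WLC_i$ (which exists by hypothesis) and $r = r_2 + 1 + \delta_F$. Such a refinement is always available since every multiple $\Z_p$-extension of $F$ sitting between $F_\infty$ and $\tilde{F}$ can be exhausted by adjoining one $\Z_p$-factor at a time. This is precisely the shape of tower considered in Section 3, and the $d$-th term of the tower is $L$ itself; moreover the base $F^{(1)} = F_\infty$ satisfies $WLC_i$, so the running hypotheses of Theorem \ref{theorem torsion free} are met.

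Next, the assumption $\Tor{\Lambda_L}{\XS{-i}{L}} = 0$ reads, in the notation of the tower, as $\Tor{\Lambda_{(d)}}{\X{d}{-i}} = 0$. Applying the second (iterated) assertion of Theorem \ref{theorem torsion free} from level $d$ all the way up to level $r$ yields $\Tor{\Lambda_{(k)}}{\X{k}{-i}} = 0$ for every $d \leq k \leq r$, and in particular
\begin{equation*}
\Tor{\tilde{\Lambda}}{\XS{-i}{\tilde{F}}} = 0.
\end{equation*}

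Finally, I would invoke Theorem \ref{formulation Greenberg conjecture}: under the condition $(Dec)$ and the congruence $i\not\equiv 0 \mod{[F(\mu_p):F]}$, the three assertions there are equivalent, so the vanishing of $\Tor{\tilde{\Lambda}}{\XS{-i}{\tilde{F}}}$ just obtained is equivalent to the pseudo-nullity of the $\tilde{\Lambda}$-module $X(\tilde{F}(\mu_p))(i-1)^\Delta$, which is the desired conclusion. Since both Theorem \ref{theorem torsion free} and Theorem \ref{formulation Greenberg conjecture} have already been proved, there is no genuine obstacle at this stage; the only point that demands attention is verifying that the starting $\Z_p$-extension $F_\infty$ provided by hypothesis can indeed be placed at the base of a tower refining $L$ inside $\tilde{F}$, which is immediate from $F_\infty \subset L \subset \tilde{F}$.
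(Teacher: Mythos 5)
The proposal is correct and takes essentially the same approach as the paper: apply the going-up statement (Theorem \ref{theorem torsion free}) along a tower from $L$ up to $\tilde{F}$ to get $\Tor{\tilde{\Lambda}}{\XS{-i}{\tilde{F}}}=0$, then invoke the equivalence of Theorem \ref{formulation Greenberg conjecture} to conclude pseudo-nullity of $X(\tilde{F}(\mu_p))(i-1)^{\Delta}$. You are merely more explicit than the paper in arranging the tower so that $F^{(1)}=F_\infty$ and $F^{(d)}=L$, which is a harmless and welcome detail.
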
 
In particular, if $i\equiv 1 \mod{[F(\mu_p):F]}$ we obtain a sufficient condition for the validity of Greenberg's generalized conjecture.
\begin{coro}\label{twist i=1}
	Assume that the condition $(Dec)$ holds for $F.$ Assume that there exist a $\Z_{p}^d$-extension $L$ and an integer $i\equiv 1 \mod{[F(\mu_p):F]}$ such that 
	\begin{enumerate}
		\item $L$ contains a $\Z_{p}$-extension which satisfies $WLC_i,$ and 
		\item  $\Tor{\Lambda_L}{\XS{-i}{L}}=0.$
	\end{enumerate}
	Then $F$ satisfies Greenberg generalized conjecture. \hfill $\square$
\end{coro}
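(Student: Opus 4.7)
}
The plan is to deduce Greenberg's generalized conjecture for $F$ from Theorem \ref{main theorem} by descending from $\tilde{F}(\mu_p)$ to $\tilde{F}$ using the hypothesis $i \equiv 1 \mod{[F(\mu_p):F]}$. First, I would observe that the two hypotheses of the corollary (the existence of $L$ with $\Tor{\Lambda_L}{\XS{-i}{L}}=0$ containing a $\Z_p$-extension satisfying $WLC_i$, together with the condition $(Dec)$) are precisely the hypotheses needed to apply Theorem \ref{main theorem} for this integer $i$. Since moreover $i\equiv 1 \mod{[F(\mu_p):F]}$ certainly implies $i\not\equiv 0\mod{[F(\mu_p):F]}$ (recall $[F(\mu_p):F]>1$ and $p$ odd), Theorem \ref{main theorem} yields that the $\tilde{\Lambda}$-module $X(\tilde{F}(\mu_p))(i-1)^\Delta$ is pseudo-null.

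Next, I would identify $X(\tilde{F}(\mu_p))(i-1)^\Delta$ with $X(\tilde{F})$ up to pseudo-isomorphism (up to an auto-equivalence of the category of $\tilde{\Lambda}$-modules). Since $i-1\equiv 0 \mod{[F(\mu_p):F]}$, the Teichm\"uller character $\omega^{i-1}$ is trivial on $\Delta=\Gal{\tilde{F}(\mu_p)/\tilde{F}}$. Hence the $\Delta$-action on $X(\tilde{F}(\mu_p))(i-1)$ coincides with the original $\Delta$-action on $X(\tilde{F}(\mu_p))$, so
\begin{equation*}
X(\tilde{F}(\mu_p))(i-1)^\Delta \simeq X(\tilde{F}(\mu_p))^\Delta \otimes \Z_p(i-1),
\end{equation*}
and twisting by $\Z_p(i-1)$ is an autoequivalence on $\tilde{\Lambda}$-modules preserving pseudo-nullity. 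Hence $X(\tilde{F}(\mu_p))^\Delta$ is pseudo-null over $\tilde{\Lambda}$.

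Finally, I would reduce from $X(\tilde{F}(\mu_p))^\Delta$ to $X(\tilde{F})$ by a standard argument: since $|\Delta|$ divides $[F(\mu_p):F]\mid p-1$, it is prime to $p$. Let $L$ denote the maximal abelian unramified pro-$p$-extension of $\tilde{F}(\mu_p)$; it is Galois over $\tilde{F}$, with $\Gal{L/\tilde{F}} = X(\tilde{F}(\mu_p)) \rtimes \Delta$. Its maximal pro-$p$ abelian quotient is the coinvariant module $X(\tilde{F}(\mu_p))_\Delta$, and since every abelian unramified pro-$p$ extension of $\tilde{F}$ becomes such over $\tilde{F}(\mu_p)$, the quotient above realises the whole of $X(\tilde{F})$; thus $X(\tilde{F})\simeq X(\tilde{F}(\mu_p))_\Delta$. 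The order of $\Delta$ being prime to $p$, invariants and coinvariants coincide via the averaging idempotent, giving
\begin{equation*}
X(\tilde{F}) \simeq X(\tilde{F}(\mu_p))_\Delta \simeq X(\tilde{F}(\mu_p))^\Delta,
\end{equation*}
which is pseudo-null by the previous step. This is exactly GGC for $F$.

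The only slightly delicate point is Step 3, the identification $X(\tilde{F})\simeq X(\tilde{F}(\mu_p))^\Delta$ as $\tilde{\Lambda}$-modules, which must respect the $\tilde{\Gamma}$-action; this is however routine given that $\tilde{\Gamma}$ commutes with the $\Delta$-action and that $|\Delta|$ is invertible in $\Z_p$. Apart from that, the proof is a direct combination of Theorem \ref{main theorem} with the twist triviality forced by the congruence on $i$. $\square$
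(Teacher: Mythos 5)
Your proof is correct and is essentially the argument the paper leaves implicit behind the $\square$: apply Theorem \ref{main theorem} to get pseudo-nullity of $X(\tilde F(\mu_p))(i-1)^\Delta$, untwist this to $X(\tilde F(\mu_p))^\Delta$ by noting that $\omega^{i-1}$ is trivial on $\Delta$ when $i\equiv 1\mod{[F(\mu_p):F]}$ and that twisting the $\tilde\Gamma$-action by $\chi^{i-1}$ is a ring automorphism of $\tilde\Lambda$ preserving pseudo-nullity, then descend to $X(\tilde F)$ using that $|\Delta|$ is prime to $p$. Your Step 3 can be compressed to the standard fact that for an extension of degree prime to $p$, the extension and norm maps give $X(\tilde F)\simeq X(\tilde F(\mu_p))^\Delta\simeq X(\tilde F(\mu_p))_\Delta$ as $\tilde\Lambda$-modules; your more explicit version via the field $L$ and coinvariants is also fine (and you correctly implicitly use that any ramification in $\tilde F(\mu_p)/\tilde F$ is prime to $p$, so it cannot interfere with the pro-$p$ unramified extensions under consideration). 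The parenthetical ``recall $[F(\mu_p):F]>1$'' is worth a remark: this hypothesis (equivalently $\mu_p\not\subset F$) is already built into Theorem \ref{main theorem} through the requirement $i\not\equiv 0\mod{[F(\mu_p):F]}$, which is unsatisfiable when $[F(\mu_p):F]=1$; the case $\mu_p\subset F$ is handled by \cite{Equi GGC}, as the remark following Theorem \ref{formulation Greenberg conjecture} points out, so your argument covers exactly the scope that the paper's Theorem \ref{main theorem} can reach.
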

\begin{rema}\label{Nguyen's result} 
	In \cite{Nguyen19}, Nguyen   Quang Do gave an inductive process for the pseudo-nullity of the unramified totally split Iwasawa module in a tower of multiple $\Z_{p}$-extensions. Precisely, he proved that GGC holds for any number field $F$ (see \cite[Theorem 1.4]{Nguyen19}) satisfying the following conditions:
	\begin{itemize}
		\item $F$ verifies Kuz'min-Gross conjecture (i.e $X'(F^c)^{\Gamma_c}$ is finite).
		\item $F$ admits a $\Z_{p}^2$-extension $F^{(2)}$ which is normal over $\Q,$ contains $F^c$ and $X'(F^{(2)})$ is pseudo-null.
	\end{itemize}
	Under these conditions, Lemma $1.2$ of \cite{Nguyen19} proves that the decomposition group of any $p$-place of $F^{(2)}$ has $\Z_p$-rank $2.$ \\
	By choosing $i\geq 2$ and $i\equiv 1\mod{[F(\mu_p):F]},$ Theorem \ref{formulation Greenberg conjecture} gives an equivalence between the pseudo-nullity of $X'(F^{(2)})$ and the triviality of $\Tor{\Lambda_{(2)}}{\XS{-i}{F^{(2)}}}.$ Then, the $\Z_{p}^2$-extension $F^{(2)}$ satisfies the conditions of Theorem \ref{theorem torsion free}. Hence, we obtain Nguyen Quang Do's result.
\end{rema}

\section{$(p,i)$-regular number fields}

The $(p, i)$-regular number fields were introduced in \cite{Assim95}, they are a twisted generalization of $p$-rational fields (see \cite{Movahhedi90} and \cite{Movahhedi-Nguyen}). A number field $F$ is called $(p, i)$-regular if the second cohomological group $H^2(G_S(F), \Z_p/p(i))$ is trivial. In this section, we show that for any $(p, i)$-regular number field $F,$ the $\tilde{\Lambda}$-torsion sub-module of $\XS{-i}{\tilde{F}}$ vanishes. As a consequence, we obtain that for $(p, i)$-regular number fields which satisfy the condition $(Dec),$ the $\tilde{\Lambda}$-module $X(\tilde{F}(\mu_p))(i-1)^\Delta$ is pseudo-null.\\
 There are many equivalent properties to the $(p, i)$-regularity. We cite here the following equivalence
\begin{equation*}
	F \mbox{ is } (p,i)\mbox{-regular } \Longleftrightarrow  \mbox{ The module } \XS{-i}{F^c} \mbox{ is } \Lambda_c\mbox{-free}.
\end{equation*}
This equivalence was proved in \cite{Nguyen19} for integers $i\neq 1.$ The case $i=1$ is a consequence of the following periodicity, for $i\equiv j \mod{[F(\mu_p):F]},$
\begin{center}
	$F$ is $(p,i)$-regular $\Longleftrightarrow$ $F$ is $(p,j)$-regular.
\end{center}
	Since, for any integer $j\neq 1$ and $j\equiv 1 \mod{[F(\mu_p):F]},$ we have
	\begin{equation*}
		\XS{-1}{F^c} = \XS{-j}{F^c}(j-1).
	\end{equation*}
In particular, any $(p, i)$-regular number field $F$  satisfies $\Tor{\Lambda_{c}}{\XS{-i}{F^c}}=0.$ Then as a consequence of Theorem \ref{theorem torsion free} we get 
\begin{prop}\label{corollary torsion free}
	Let $i$ be an integer. For any $(p, i)$-regular number field $F$ we have 
	\begin{equation*}
	\Tor{\tilde{\Lambda}}{\XS{-i}{\tilde{F}}}=0.
	\end{equation*}
	\hfill $\square$
\end{prop}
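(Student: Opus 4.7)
The plan is to apply Corollary \ref{earlier torsion} (equivalently Theorem \ref{theorem torsion free}) with the cyclotomic $\Z_p$-extension as the starting layer. The whole point of the going-up mechanism developed in Section 3 was precisely to reduce a statement about $\tilde F$ to a statement about a single $\Z_p$-extension, so the proof should reduce to verifying the hypotheses of that corollary at the base level.

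First, I would choose $F_\infty = F^c,$ the cyclotomic $\Z_p$-extension of $F.$ By part (2) of Remark \ref{Leopoldt tordue}, $F^c$ satisfies $WLC_i$ for every integer $i,$ so the standing hypothesis of Theorem \ref{theorem torsion free} is automatic. Next, I would invoke the equivalence recalled at the beginning of Section 5: $F$ being $(p,i)$-regular is the same as $\XS{-i}{F^c}$ being $\Lambda_c$-free. Using the periodicity
\begin{equation*}
\XS{-1}{F^c} = \XS{-j}{F^c}(j-1) \quad \text{for any } j\equiv 1 \mod{[F(\mu_p):F]},
\end{equation*}
this equivalence holds uniformly in $i$ (including $i=1$), as noted in the excerpt. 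In particular, $\Lambda_c$-freeness forces
\begin{equation*}
\Tor{\Lambda_c}{\XS{-i}{F^c}} = 0.
\end{equation*}

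With the base case in hand, I would conclude by quoting Corollary \ref{earlier torsion}: since $F$ admits the $\Z_p$-extension $F^c$ satisfying $WLC_i,$ and since the $\Lambda_c$-torsion of $\XS{-i}{F^c}$ is trivial, the $\tilde{\Lambda}$-module $\XS{-i}{\tilde F}$ is torsion-free, i.e.\ $\Tor{\tilde\Lambda}{\XS{-i}{\tilde F}}=0.$

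There is no genuine obstacle here — the proof is essentially a one-line deduction from the machinery already built. The only mild subtlety worth flagging is the case $i=1,$ where one must be careful that $WLC_1$ holds for $F^c$ (which it does, because $F^c \subset F^c$) and that the $(p,i)$-regularity equivalence also holds (which is secured by the twist-invariance of $(p,i)$-regularity modulo $[F(\mu_p):F]$). Once those two points are flagged, the statement follows immediately by specializing Corollary \ref{earlier torsion} to $F_\infty=F^c.$
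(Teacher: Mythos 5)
Your proof is correct and matches the paper's own argument essentially verbatim: the paper also starts from the equivalence of $(p,i)$-regularity with $\Lambda_c$-freeness of $\XS{-i}{F^c}$ (handling $i=1$ via the twist-periodicity), deduces $\Tor{\Lambda_c}{\XS{-i}{F^c}}=0$, and concludes by Theorem \ref{theorem torsion free} (equivalently Corollary \ref{earlier torsion}), using that $F^c$ satisfies $WLC_i$.
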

\begin{rema}
     In \cite[Theorem $1$]{Fujii}, Fujii shows that $\Tor{\tilde{\Lambda}}{\XS{0}{\tilde{F}}}=0$ for a family of quadratic fields $F$ which are $p$-rational. The above proposition affirms that this result is true for all $p$-rational number fields and also gives a twisted generalization  of \cite[Theorem $1$]{Fujii}.
\end{rema}

Proposition \ref{corollary torsion free} and  Theorem \ref{formulation Greenberg conjecture} lead to the following consequence.
\begin{theo} \label{a part of GGC}
	Let $i \not \equiv 0 \mod{[F(\mu_p): F]}$ be an integer. For any imaginary $(p, i)$-regular number field which satisfies the condition (Dec), the $\tilde{\Lambda}$-module $X(\tilde{F}(\mu_p))(i-1)^\Delta$ is pseudo-null.
	\hfill $\square$
\end{theo}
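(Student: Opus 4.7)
The plan is to combine the two main preceding results of the paper, namely Proposition \ref{corollary torsion free} and Theorem \ref{formulation Greenberg conjecture}, so that the statement reduces to a direct chain of implications with essentially no new computation.

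First, I would invoke Proposition \ref{corollary torsion free}: since $F$ is assumed $(p,i)$-regular, the $\tilde{\Lambda}$-torsion submodule of the twisted $S$-ramified Iwasawa module over $\tilde{F}$ vanishes, that is
\begin{equation*}
\Tor{\tilde{\Lambda}}{\XS{-i}{\tilde{F}}}=0.
\end{equation*}
This is exactly the statement proved earlier via the $(p,i)$-regularity characterization $\XS{-i}{F^c}$ being $\Lambda_c$-free, combined with the going-up result Theorem \ref{theorem torsion free} applied along a tower of $\Z_p$-extensions starting from the cyclotomic $\Z_p$-extension $F^c$ (which satisfies $WLC_i$ by Remark \ref{Leopoldt tordue}).

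Next, I would apply Theorem \ref{formulation Greenberg conjecture}. Since $i\not\equiv 0 \mod{[F(\mu_p):F]}$ and $F$ satisfies the decomposition condition $(Dec)$, the hypotheses of that theorem are met, and the equivalence $(1)\Leftrightarrow (3)$ there gives
\begin{equation*}
\Tor{\tilde{\Lambda}}{\XS{-i}{\tilde{F}}}=0 \iff X(\tilde{F}(\mu_p))(i-1)^{\Delta} \text{ is pseudo-null}.
\end{equation*}
Combining this equivalence with the vanishing obtained in the first step yields the desired pseudo-nullity of $X(\tilde{F}(\mu_p))(i-1)^{\Delta}$.

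Since both ingredients have already been established in the paper, there is no real obstacle in this argument; the only thing worth checking carefully is that the hypotheses of Theorem \ref{formulation Greenberg conjecture} are really in force, and in particular that the congruence assumption $i\not\equiv 0 \mod{[F(\mu_p):F]}$ in the statement is precisely what is needed to apply the equivalence there. The role of the word \emph{imaginary} in the hypothesis is essentially cosmetic in this reduction: it enters only insofar as $(p,i)$-regularity combined with imaginarity is the natural setting ensuring non-triviality of the multiple $\Z_p$-extension $\tilde{F}/F$ (so that $\tilde{\Lambda}$ is genuinely of Krull dimension $\geq 2$ and pseudo-nullity is a nontrivial condition). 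Apart from that, the proof is a two-line concatenation of Proposition \ref{corollary torsion free} and Theorem \ref{formulation Greenberg conjecture}.
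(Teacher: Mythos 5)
Your proposal matches the paper's own (unwritten) proof exactly: the theorem is presented as an immediate consequence of Proposition \ref{corollary torsion free} and Theorem \ref{formulation Greenberg conjecture}, which is precisely the two-step concatenation you give. Your remark about the role of "imaginary" is a reasonable gloss, though the paper itself does not comment on it.
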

In particular, when the integer $i\equiv 1 \mod{[F(\mu_p):F]}$ we get the following
\begin{coro}\label{regualr number field}
	Greenberg's generalized conjecture holds for all imaginary $(p, 1)$-regular number field which satisfies the condition (Dec).
	\hfill $\square$
\end{coro}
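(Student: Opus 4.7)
The plan is to deduce the corollary from Theorem~\ref{a part of GGC} by choosing the twist $i$ to satisfy simultaneously $i \equiv 1 \pmod{[F(\mu_p):F]}$ and $i \not\equiv 0 \pmod{[F(\mu_p):F]}$. Such an $i$ exists whenever $\mu_p \not\subset F$; in the remaining case $\mu_p \subset F$ one has $\Delta = 1$ and $\tilde{F}(\mu_p) = \tilde{F}$, and the conclusion follows directly from Proposition~\ref{corollary torsion free} combined with Theorem~\ref{formulation Greenberg conjecture}. Under our congruence the Tate twist $i-1$ is a multiple of $[F(\mu_p):F]$, so the Teichm\"uller character $\omega^{i-1}$ is trivial on $\Delta$, i.e.\ $\Delta$ acts trivially on $\Z_p(i-1)$.

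First, I would verify that $F$ is $(p,i)$-regular for the chosen $i$. The periodicity equivalence stated just before Proposition~\ref{corollary torsion free}, namely that $(p,j)$-regularity depends only on $j \pmod{[F(\mu_p):F]}$, shows that $(p,1)$-regularity of $F$ upgrades to $(p,i)$-regularity. With the hypothesis $(Dec)$ already assumed, Theorem~\ref{a part of GGC} then yields that $X(\tilde{F}(\mu_p))(i-1)^\Delta$ is pseudo-null as a $\tilde{\Lambda}$-module.

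Second, I would remove the twist and descend from $\tilde{F}(\mu_p)$ to $\tilde{F}$. Since $\Delta$ acts trivially on $\Z_p(i-1)$ by our choice, there is an isomorphism of $\tilde{\Lambda}$-modules
\[
X(\tilde{F}(\mu_p))(i-1)^\Delta \;\cong\; X(\tilde{F}(\mu_p))^\Delta \otimes_{\Z_p} \Z_p(i-1),
\]
and since twisting by a character of $\tilde{\Gamma}$ preserves pseudo-nullity, $X(\tilde{F}(\mu_p))^\Delta$ is itself pseudo-null. Because $|\Delta| = [F(\mu_p):F]$ divides $p-1$ and $p$ is odd, $|\Delta|$ is a unit in $\Z_p$, and the standard Galois-descent argument for $p$-parts of class groups under an extension of degree prime to $p$ (restriction/norm composes to multiplication by $|\Delta|$, an automorphism on $p$-groups) gives a natural isomorphism $A(E)_p \cong A(E(\mu_p))_p^\Delta$ at each finite layer $E$ of $\tilde{F}/F$. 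Passing to the inverse limit yields an isomorphism $X(\tilde{F}) \cong X(\tilde{F}(\mu_p))^\Delta$ of $\tilde{\Lambda}$-modules, whence $X(\tilde{F})$ is pseudo-null, which is GGC.

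The main obstacle in this plan is the final Iwasawa-theoretic descent $X(\tilde{F}) \cong X(\tilde{F}(\mu_p))^\Delta$: the finite-level isomorphism is classical when $|\Delta|$ is coprime to $p$, but one has to check that the norm maps defining the two inverse systems are compatible with these finite-level identifications, so that the isomorphism survives at the limit. This compatibility is routine (it comes from functoriality of class field theory together with vanishing of $H^k(\Delta,-)$ for $k \geq 1$ on $p$-primary $\Delta$-modules), but deserves to be spelled out carefully since it is the only nontrivial piece not directly supplied by Theorem~\ref{a part of GGC}.
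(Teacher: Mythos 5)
Your main line is correct and is exactly what the paper leaves implicit behind the $\square$: choose an integer $i$ with $i\equiv 1$ and $i\not\equiv 0 \pmod{[F(\mu_p):F]}$ (possible precisely when $\mu_p\not\subset F$), upgrade $(p,1)$-regularity to $(p,i)$-regularity via the stated periodicity, apply Theorem~\ref{a part of GGC} to get pseudo-nullity of $X(\tilde{F}(\mu_p))(i-1)^\Delta$, note that $\Delta$ acts trivially on $\Z_p(i-1)$ so this equals $X(\tilde{F}(\mu_p))^\Delta(i-1)$, untwist by the $\tilde{\Gamma}$-character $\omega^{i-1}$ (which preserves pseudo-nullity), and identify $X(\tilde{F}(\mu_p))^\Delta$ with $X(\tilde{F})$ by prime-to-$p$ Galois descent. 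The descent step you flag as the ``only nontrivial piece'' is indeed the one the paper does not spell out, and your treatment of it (finite-level isomorphism via the norm because $|\Delta|$ is invertible on $p$-groups, compatibility with the two inverse systems) is correct.

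Where the write-up slips is the branch $\mu_p\subset F$. You propose to conclude from Proposition~\ref{corollary torsion free} ``combined with Theorem~\ref{formulation Greenberg conjecture},'' but Theorem~\ref{formulation Greenberg conjecture} carries the same hypothesis $i\not\equiv 0 \pmod{[F(\mu_p):F]}$, which is unsatisfiable when $[F(\mu_p):F]=1$; so, as stated, that theorem gives nothing in the case $\mu_p\subset F$. The gap is easily repaired: the Remark immediately following Theorem~\ref{formulation Greenberg conjecture} records that for $\mu_p\subset F$ the equivalence $(1)\Leftrightarrow(2)$ is the original untwisted result of Lannuzel--Nguyen Quang Do (\cite[Proposition 3.6]{Equi GGC}), and $(2)\Leftrightarrow(3)$ is taken from \cite{Nguyen19}. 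Citing that remark (or those sources) in place of Theorem~\ref{formulation Greenberg conjecture}, and then invoking Proposition~\ref{corollary torsion free}, closes the $\mu_p\subset F$ case. This is worth fixing explicitly, since the corollary-via-Theorem~\ref{a part of GGC} route genuinely only reaches $\mu_p\not\subset F$, and the paper itself relies on that remark for the remaining case.
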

\noindent In \cite[Theorem $3.4$]{Nguyen19}, Nguyen Quang Do obtained a similar result for any integer $i.$ Precisely, he showed that $X'(\tilde{F}(\mu_p))(i-1)^\Delta$ vanishes for any $(p, i)$-regular number field $F.$ Assuming that $F$ satisfies the conditions (Dec), Proposition \ref{proposition chi composante} implies the pseudo-nullity of $X(\tilde{F}(\mu_p))(i-1)^\Delta.$

The condition $i \not \equiv 0 \mod{[F(\mu_p): F]}$ means that Theorem \ref{a part of GGC} does not concern $p$-rational number fields. Thus we can ask the following question:
\begin{center}
	\textit{Does $p$-rational number fields satisfy Greenberg's generalized conjecture?}
\end{center} 
It is known that GGC holds for $p$-rational field $F$ containing $\mu_p.$ It follows from Corollary \ref{regualr number field}, since $F$ satisfies (Dec) and is $(p, 1)$-regular. In general, the implication
\begin{center}
	$F$ is $p$-rational $\Rightarrow$ $F$ is $(p, 1)$-regular,
\end{center}
fails to be true. Thus, even if the condition (Dec) is satisfied, Corollary \ref{regualr number field} does not apply. The quadratic number field $F=\Q(\sqrt{-61})$ has one $3$-adic place ($3$ is inert) but its $S_3$-class number is $6$. Thus $F$ is not $(3, 1)$-regular. In contrast, since $\Q(\sqrt{183})$ has class number $2$, by \cite[Corollary $4.1.2$]{Greenberg16} the field $F$ is $3$-rational.\\

\textbf{Acknowledgment.} The authors would like to thank Thong Nguyen Quang Do for many helpful comments on an earlier version of this paper.

\bibliographystyle{alpha} 

\end{document}